
\documentclass{amsart}
\usepackage{amssymb}
\usepackage{graphicx}
\usepackage{epstopdf}
\usepackage{pdfsync}





%

%

%

%

%

%

%

%

%

%


\numberwithin{equation}{section}



\theoremstyle{plain} 
\newtheorem{thm}[equation]{Theorem}
\newtheorem{cor}[equation]{Corollary}
\newtheorem{lem}[equation]{Lemma}
\newtheorem{prop}[equation]{Proposition}
\newtheorem{claim}[equation]{Claim}

\theoremstyle{definition}

\theoremstyle{remark}

\setcounter{tocdepth}{1}

\graphicspath{{PSF_figures/}}
\DeclareGraphicsRule{.tif}{png}{.png}{`convert #1 `dirname #1`/`basename #1 .tif`.png}

\title[On nonhyperbolicity of P/P and P/SF knots in $S^3$] {On nonhyperbolicity of P/P and P/SF knots in $S^3$}

\author{Sungmo Kang}

\email{skang4450@chonnam.ac.kr}

\begin{document}







\begin{abstract}
In \cite{B90} or an available version \cite{B18}, Berge constructed twelve families of primitive/primitve(or simply P/P) knots, which are referred to as the Berge knots. It is proved in \cite{B08} or independently in \cite{G13} that all P/P knots are the Berge knots, which gives the complete classification of P/P knots. However, the hyperbolicity of the Berge knots was undetermined in \cite{B90}. As a natural generalization of P/P knots, Dean introduced primitive/Seifert(or simply P/SF) knots in $S^3$. The classification of hyperbolic P/SF knots has been performed and the complete list of hyperbolic P/SF knots is given in \cite{BK20}.

In this paper, we provide necessary, sufficient, or equivalent conditions for P/P or P/SF knots being nonhyperbolic, and their applications on some P/SF knots. The results of this paper will be used in proving the hyperbolicity of P/P and P/SF knots in \cite{K20a} and \cite{K20b} completing the classification of hyperbolic P/P and P/SF knots in \cite{B90} and \cite{BK20} respectively.
\end{abstract}

\maketitle


\section{Introduction}\label{Introduction and main result}

For a genus two handlebody $V$ and an essential simple closed curve $\alpha$ in $\partial V$, $V[\alpha]$ will denote the 3-manifold obtained by
adding a 2-handle to $V$ along $\alpha$. An essential simple closed curve $\alpha$ in $\partial V$ is \textit{primitive} in $V$ if $V[\alpha]$ is a solid torus. We say $\alpha$ is \textit{Seifert} in $V$ if $V[\alpha]$ is a Seifert-fibered space and not a solid torus. Note that, since $V$ is a genus two handlebody, that $\alpha$ is Seifert in $V$ implies that $V[\alpha]$ is an orientable Seifert-fibered space over $D^2$ with two exceptional fibers, or an orientable Seifert-fibered space over the M\"{o}bius band with at most one exceptional fiber.

Suppose $\alpha$ is a simple closed curve lying in a genus two Heegaard surface $\Sigma$ of $S^3$ bounding
handlebodies $H$ and $H'$. A simple closed curve $\alpha$ in $\Sigma$ is \textit{primitive/primitive}, \textit{double-primitive}, or P/P, if it is primitive in both $H$ and $H'$. Similarly, $\alpha$ is \textit{primitive/Seifert}, or P/SF, if it is primitive in one of $H$ or $H'$, say $H'$, and Seifert in $H$. If a knot $k$ in $S^3$ is represented by a P/P(P/SF, resp.) simple closed curve, then we say that $k$ is a P/P(P/SF, resp.) knot and $(\alpha, \Sigma)$ is a P/P(P/SF, resp.) position of $k$. Note that a knot may have more than one P/P(P/SF, resp.) position. Also note that since $\alpha$ is primitive in one handlebody, a P/P or P/SF knot $k$ is a tunnel-number-one knot in $S^3$.

Let $N(k)$ be a tubular neighborhood of $k$ in $S^3$ and $\gamma$ a component of $\partial N(\alpha) \cap \Sigma$. Then $\gamma$ is an essential simple closed curve in $\partial N(\alpha)$. The isotopy class of $\gamma$ in $\partial N(\alpha)$ defines the \emph{surface slope} in $\partial N(k)$. Note that surface slopes are integral. Then by using, for example, Lemma 2.3 in \cite{D03}, we see that for surface slopes P/P knots admit lens space Dehn surgeries and P/SF knots admit Seifert-fibered Dehn surgeries over $S^2$ with at most three exceptional fibers or over $\mathbb{R}\mathrm{P}^2$ with at most two exceptional fibers. Connected sum of lens spaces might be admitted as Dehn surgeries on P/SF knot, but by \cite{EM92} they occur only from Dehn surgeries on nonhyperbolic P/SF knots.

P/P knots are introduced and studied by Berge \cite{B90}. Berge constructed twelve families of P/P knots, which are referred to as the Berge knots. Then it is proved in \cite{B08} or independently in \cite{G13} that all P/P knots are the Berge knots. This gives the complete classification of P/P knots in $S^3$. However, the hyperbolicity of the Berge knots was undetermined in \cite{B90}.
Later, in \cite{D03}, Dean introduced P/SF knots in $S^3$, which is a natural generalization of P/P knots. The classification of hyperbolic P/SF knots has been performed for years and the complete list of hyperbolic P/SF knots has been made in \cite{BK20}.

In this paper, we provide necessary, sufficient, or equivalent conditions for P/P or P/SF knots being nonhyperbolic, and their applications on some P/SF knots. Therefore the results of this paper will be used in proving the hyperbolicity of P/P and P/SF knots in \cite{K20a} and \cite{K20b} completing the classification of hyperbolic P/P and P/SF knots in \cite{B90} and \cite{BK20} respectively.

Determining whether or not P/P or P/SF knots in $S^3$ are hyperbolic is based on the results of Bleiler-Litherland \cite{BL89} or
independently Wu \cite{W90} for P/P knots, and Miyazaki and Motegi \cite{MM05} for P/SF knots. For P/P knots in $S^3$ which admit lens space surgeries, Bleiler-Litherland \cite{BL89} and independently Wu \cite{W90} showed that if nontrivial surgery on a satellite knot $k$ in $S^3$ yields a lens space, then $k$ is a tunnel-number-one cable of a torus knot, and the surgery is integral. In particular, $k$ is a $(2p_0q_0\pm1, 2)$ cable of a $(p_0, q_0)$ torus knot.

For P/SF knots in $S^3$, Miyazaki and Motegi have shown in \cite{MM05} that if a primitive/Seifert knot $k$ is not hyperbolic, then $k$ is a torus knot or a tunnel-number-one cable of a torus knot. In particular the latter is
an $(sp_0q_0\pm1, s)$-cable of a $(p_0, q_0)$ torus knot

Surgeries on torus knots in $S^3$ yielding lens space or Seifert-fibered space surgeries were completely classified by Moser in \cite{M71}.
Therefore, we have the following classification of nonhyperbolic P/P and P/SF knots.

\begin{thm}\label{nonhyperbolic knots}
If \emph{P/P} or \emph{P/SF} knots are not hyperbolic, then they are torus knots or $(sp_0q_0\pm1, s)$-cable of a $(p_0, q_0)$ torus knot.
\end{thm}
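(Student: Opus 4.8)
The plan is to assemble the statement from the three classification results quoted above, using Thurston's trichotomy for knot complements as the organizing principle. Let $k$ be a P/P or P/SF knot that is not hyperbolic. Since the complement of a nontrivial knot in $S^3$ is either Seifert-fibered, toroidal, or hyperbolic, a nonhyperbolic knot is either a torus knot or a satellite knot. The torus-knot alternative is one of the two allowed conclusions, so the real content is the satellite case, where the surgery descriptions recorded in the introduction do all the work.

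First I would treat the P/P case. At its surface slope a P/P knot admits a lens space Dehn surgery, and this slope is integral, hence not meridional, so the surgery is nontrivial. If in addition $k$ is a satellite knot, then $k$ is a satellite on which a nontrivial surgery yields a lens space, so by Bleiler--Litherland \cite{BL89} and independently Wu \cite{W90} the knot $k$ is a tunnel-number-one cable of a torus knot; more precisely, $k$ is a $(2p_0q_0\pm1,2)$-cable of a $(p_0,q_0)$-torus knot. Taking $s=2$ exhibits this as an $(sp_0q_0\pm1,s)$-cable, the desired form. For the P/SF case the conclusion is essentially immediate: if $k$ is a nonhyperbolic P/SF knot, then by Miyazaki--Motegi \cite{MM05} it is a torus knot or a tunnel-number-one cable of a torus knot of the form $(sp_0q_0\pm1,s)$, and in the satellite subcase only the cable survives. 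Combining the two cases gives that a nonhyperbolic P/P or P/SF knot is a torus knot or an $(sp_0q_0\pm1,s)$-cable of a torus knot.

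I expect essentially no technical obstacle, because each branch reduces to citing a deep but established theorem; the points requiring care are bookkeeping ones. I would verify that the surgery invoked in each branch is genuinely nontrivial so that \cite{BL89}, \cite{W90}, and \cite{MM05} apply, and I would check that the cable parameters match, so that the P/P conclusion (forced to $s=2$) and the P/SF conclusion (arbitrary $s$) are both instances of the single expression $(sp_0q_0\pm1,s)$. Moser's classification \cite{M71} plays the complementary role of describing exactly which lens space and Seifert-fibered surgeries occur on torus knots, confirming that the torus-knot branch is not vacuous.
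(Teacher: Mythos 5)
Your proposal is correct and follows essentially the same route as the paper: the paper's justification (given in the discussion preceding the theorem) likewise splits via the torus/satellite dichotomy, applies Bleiler--Litherland \cite{BL89} and Wu \cite{W90} to the integral lens space surgery at the surface slope in the P/P case, quotes Miyazaki--Motegi \cite{MM05} directly in the P/SF case, and invokes Moser \cite{M71} for the torus knot branch. Your added care about nontriviality of the surgery and the matching of the cable parameters (with $s=2$ in the P/P case) is exactly the implicit bookkeeping in the paper's argument.
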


Theorem~\ref{nonhyperbolic knots} enables us to determine nonhyperbolicity of P/P or P/SF knots by investigating a simple closed curve $R$ on the boundary of
a genus two handlebody $H$ such that $H[R]$ is the exterior of a torus knot or a cable of torus knot. This is because if a knot $k$ has a P/P or P/SF position $(\alpha, \Sigma)$
such that $\Sigma$ bounds two handlebodies $H$ and $H'$ and $\alpha$ is assumed to be primitive in $H'$, then there exists a complete set of cutting disks $\{D_M, D_R\}$ of $H'$ such that the boundary $M$ of $D_M$ intersects $\alpha$ once and the boundary $R$ of $D_R$ is disjoint from $\alpha$. Note that such a cutting disk $D_R$ disjoint from $\alpha$ is unique in $H'$ up to isotopy. Since $M$ intersects $\alpha$ once and $R$ is disjoint from $\alpha$, $M$ is a meridian curve of $k$ and $H[R]$ is homeomorphic to
the exterior of $k$ in $S^3$. Therefore, the hyperbolicity of $k$ is determined by the hyperbolicity of $H[R]$.

In order to obtain necessary, sufficient, or equivalent conditions for P/P or P/SF knots being nonhyperbolic in this paper, we investigate a simple closed curve $R$ on the boundary of a genus two handlebody $H$ such that $H[R]$ is the exterior of a torus knot or a cable of torus knot by using some results of \cite{K20c} and \cite{K20d}.

Throughout the paper, we need the background for the three diagrams: Heegaard diagrams, R-R diagrams, and hybrid diagrams. See Section 2 in \cite{K20d} for their backgrounds.\\

\noindent\textbf{Acknowledgement.} In 2008, in a week-long series of talks to a seminar in the department of mathematics of the University of Texas as Austin, John Berge outlined a project to completely classify and describe the primitive/Seifert knots in $S^3$. The present paper, which provides some of the background materials necessary to carry out the project, is originated from the joint work with John Berge for the project. I should like to express my gratitude to John Berge for his support and collaboration. I would also like to thank Cameron Gordon and John Luecke for their support while I stayed in the University of Texas at Austin.

\section{Torus or cable knot relators}\label{Nonhyperbolic of P/P and P/SF knots}

Suppose $\alpha$ is P/P or P/SF in a genus two Heegaard splitting $(\Sigma; H, H')$ of $S^3$
assuming that $\alpha$ is primitive in $H'$ in both cases. Let $k$ be a knot in $S^3$ represented by $\alpha$. As explained in the introduction, since $\alpha$ is primitive in $H'$, there is a unique cutting disk $D_R$ in $H'$ such that the boundary $R$ of $D_R$ is disjoint from $\alpha$ and $H[R]$ is homeomorphic to
the exterior of $k$ in $S^3$. This implies that $k$ is a tunnel-number-one knot
such that $D_R$ is a co-core of a 1-handle regular neighborhood of some unknotting tunnel of $k$. Equivalently, the pair $(H, R)$ provides a genus two Heegaard splitting of the exterior of $k$.

Unknotting tunnels(or equivalently genus two Heegaard splittings of the exterior) of torus knots and cable of torus knots are completely
understood by Boileau-Rost-Zieschang \cite{BRZ88} or independently by Moriah \cite{M88}
for torus knots and by \cite{MS91} for cable of torus knots. This enables us to have
the following, which are results of \cite{K20d}.

\begin{thm}\label{torus knot relator R}
Suppose $R$ is a simple closed curve in the boundary of a genus two handlebody $H$ such that $H[R]$ embeds in $S^3$.
Then $H[R]$ is the exterior of a torus knot in $S^3$ if and only if $R$ has an R-R diagram of the form shown in Figure~\emph{\ref{PSFFig3aa}} with $n, s>1$, $a,b>0$, $\gcd(a,b)=1$.

If $R$ has an R-R diagram of the form shown in Figure~\emph{\ref{PSFFig3aa}a}\emph{(\ref{PSFFig3aa}b, \textit{resp}.)}, then $H[R]$ is the exterior of an $(n, s)$ with $n>s$$(((a+b)n+b, s), \text{resp}.)$ torus knot.
\end{thm}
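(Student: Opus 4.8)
The plan is to prove both implications by routing the argument through the classification of unknotting tunnels of torus knots. Recall that $H[R]$ being the exterior of a torus knot is equivalent to the pair $(H,R)$ being a genus two Heegaard splitting of that exterior, with $D_R$ the co-core of a $1$-handle neighborhood of an unknotting tunnel. Since Boileau--Rost--Zieschang \cite{BRZ88} and independently Moriah \cite{M88} have completely enumerated the unknotting tunnels (equivalently, the genus two Heegaard splittings) of torus knot exteriors, the real content of the theorem is to match that enumeration with the combinatorial normal forms of R-R diagrams. I would therefore organize the argument as (i) a direct verification that a curve with the stated R-R diagram yields the claimed torus knot exterior, and (ii) a converse forcing any $R$ with $H[R]$ a torus knot exterior into one of the two displayed R-R forms.

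For the \emph{if} direction I would argue algebraically. Writing $\pi_1(H)=\langle x,y\rangle$ for the free group on the two handle generators, attaching the $2$-handle along $R$ gives the one-relator presentation $\pi_1(H[R])=\langle x,y \mid w\rangle$, where $w$ is the word traced out by $R$ as read off the R-R diagram. Reading $w$ directly from Figure~\ref{PSFFig3aa}, I expect that in case (a) the word reduces, after an evident change of free generators, to the torus-knot relator $x^{n}y^{-s}$, and in case (b) to $x^{(a+b)n+b}\,y^{-s}$; the exponents are precisely the strand multiplicities recorded in the diagram, with $\gcd(a,b)=1$ guaranteeing that the relevant exponents are coprime. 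Since $H[R]$ is assumed to embed in $S^3$ and has torus boundary, and the resulting group is a torus knot group, I would then conclude that $H[R]$ is the corresponding torus knot exterior, recognizing the Seifert-fibred structure combinatorially and reading off the indices $(n,s)$ and $((a+b)n+b,s)$ from the presentation.

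For the \emph{only if} direction I would start from the BRZ--Moriah list: up to isotopy a torus knot exterior has only finitely many unknotting tunnels, each given by an explicit curve on $\partial H$, and I would transport each description into an R-R diagram using the R-R calculus developed in \cite{K20c} and \cite{K20d}. The point is to normalize each diagram --- via handle slides, band moves, and the reduction moves of the R-R calculus --- until it coincides with one of the two forms of Figure~\ref{PSFFig3aa}, while simultaneously checking that the parameter ranges $n,s>1$, $a,b>0$, $\gcd(a,b)=1$ are exactly those that can occur. This normalization, together with a careful accounting to ensure the families (a) and (b) exhaust all tunnels and that no genuinely distinct diagram is overlooked, is where the bulk of the work lies.

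The hardest part is this last translation. The abstract classification records tunnels up to isotopy \emph{in the exterior}, whereas an R-R diagram records a specific curve on $\partial H$ up to the R-R moves, so matching the two requires pinning down the Seifert-fibred structure of $H[R]$ combinatorially and verifying that the reduction moves produce no spurious extra families. I would expect the case analysis separating Figure~\ref{PSFFig3aa}a (the $n>s$ tunnel, depending only on $n,s$) from Figure~\ref{PSFFig3aa}b (the tunnel depending on all of $a,b,n,s$) to be the most delicate step, and I would lean on the explicit parametrizations of \cite{K20d} to control it.
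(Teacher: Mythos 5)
Your proposal takes essentially the same approach as the paper: the paper's entire proof is a one-line citation of Lemma 5.16, Proposition 5.18, and Proposition 5.21 of \cite{K20d}, which (as the paper explains immediately before the theorem) rest on exactly the Boileau--Rost--Zieschang \cite{BRZ88} / Moriah \cite{M88} classification of unknotting tunnels (equivalently, genus two Heegaard splittings) of torus knot exteriors that you route through, combined with the translation into R-R diagram normal forms that you also defer to \cite{K20d}. Your only-if direction is precisely that argument, and your if direction (reading the relator off the diagram, recognizing the torus knot group, and using the embedding in $S^3$ to pin down the homeomorphism type of $H[R]$) is a reasonable sketch of what the cited propositions carry out.
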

\begin{proof}
This follows from Lemma 5.16, Proposition 5.18, and Proposition 5.21 in \cite{K20d}.
\end{proof}

\begin{thm}\label{cable knot relator R}
Suppose $R$ is a simple closed curve in the boundary of a genus two handlebody $H$ such that $H[R]$ embeds in $S^3$.
Then $H[R]$ is the exterior of a cable knot if and only if
$R$ has an R-R diagram of the form shown in Figure~\emph{\ref{PSFFig3ab}}, with $m, n, s > 1$, $a, b > 0$, $\gcd(a,b)$ = $\gcd(m,n) = 1$, and $n(a+b) + bm$ = $sm(a+b) + \pm1$.

If $R$ has an R-R diagram of the form shown in Figure~\emph{\ref{PSFFig3ab}}, then $H[R]$ is the exterior of
a $(sp_0q_0\pm1,s)$ cable of a $(p_0,q_0)$ torus knot in $S^3$, where $\{p_0, q_0\}=\{m, a+b\}$.
\end{thm}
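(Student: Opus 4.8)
The plan is to prove Theorem~\ref{cable knot relator R} by reducing the cable case to the torus-knot case already established in Theorem~\ref{torus knot relator R}, exploiting the structural description of genus two Heegaard splittings of cable-knot exteriors. First I would recall the characterization of unknotting tunnels of cables of torus knots from \cite{MS91}: a genus two Heegaard splitting of the exterior of a $(sp_0q_0\pm1,s)$ cable of a $(p_0,q_0)$ torus knot is obtained, in a controlled way, from the genus two splitting of the underlying torus knot exterior by adding the cabling data. The goal is to translate this topological decomposition into the combinatorics of R-R diagrams, showing that the defining relator $R$ must carry two superimposed pieces of data — one recording the torus knot $(p_0,q_0)$ and one recording the cabling slope $s$ — which is exactly what the two parameter families in Figure~\ref{PSFFig3ab} encode.

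For the forward direction ($H[R]$ a cable knot exterior $\Rightarrow$ the stated R-R form), I would argue as follows. Assuming $H[R]$ embeds in $S^3$ and is the exterior of a cable knot, I would invoke the \cite{MS91} classification to fix the handlebody structure, then analyze how $R$ sits on $\partial H$ relative to the two cutting disks of $H$. The arithmetic constraint $n(a+b)+bm = sm(a+b)\pm1$ should emerge as the algebraic incarnation of the cabling condition $sp_0q_0\pm1$: with $\{p_0,q_0\}=\{m,a+b\}$, the product $p_0q_0 = m(a+b)$ appears on the right-hand side, and the left-hand side records the homological winding of $R$. I expect this direction to follow from the relevant lemmas and propositions of \cite{K20d} in much the same way Theorem~\ref{torus knot relator R} did, so I would cite the precise statements there (the analogues of Lemma 5.16, Proposition 5.18, Proposition 5.21) that handle the cable case.

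For the converse and for the identification of which cable knot arises, I would take a relator $R$ presented by the R-R diagram of Figure~\ref{PSFFig3ab} with the stated hypotheses on $m,n,s,a,b$ and compute $H[R]$ directly. The strategy is to read off from the diagram that $H[R]$ is built as a Seifert-type piece glued along a torus, recognize the $(p_0,q_0)=(m,a+b)$ torus knot exterior as a sub-object, and then verify that the remaining surgery description produces the $s$-fold cabling with framing $sp_0q_0\pm1$. The arithmetic identity $n(a+b)+bm = sm(a+b)\pm1$ is precisely the consistency condition guaranteeing that the cabling slope is an integer surgery slope of the required form, so substituting it should collapse the computation to the expected cable.

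The main obstacle I anticipate is the bookkeeping in the forward direction: extracting the clean arithmetic relation and the partition $\{p_0,q_0\}=\{m,a+b\}$ from the geometry of how $R$ meets the cutting disks, while keeping track of the $\pm1$ ambiguity and the roles of the two "sides" of the diagram. Unlike the torus-knot theorem, where a single winding pair $(n,s)$ or $((a+b)n+b,s)$ suffices, here two independent numerical data must be disentangled and shown to satisfy a single Diophantine compatibility condition. I would isolate this as the technical heart of the argument and lean on the \cite{MS91} normal form together with the R-R diagram calculus developed in \cite{K20d} to carry it out, rather than attempting an ab initio computation.
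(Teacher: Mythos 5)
Your proposal is correct and takes essentially the same approach as the paper: the paper's entire proof is a one-line citation of Lemma 5.16, Proposition 5.18, and Proposition 5.22 of \cite{K20d} (the cable-knot analogue of the propositions behind Theorem~\ref{torus knot relator R}), results which in turn rest on the \cite{MS91} classification of unknotting tunnels of cables of torus knots---exactly the reduction you describe and the citation strategy you settle on in your final paragraph.
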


\begin{proof}
This follows from Lemma 5.16, Proposition 5.18, and Proposition 5.22 in \cite{K20d}.
\end{proof}

\begin{figure}[tbp]
\centering
\includegraphics[width = 1\textwidth]{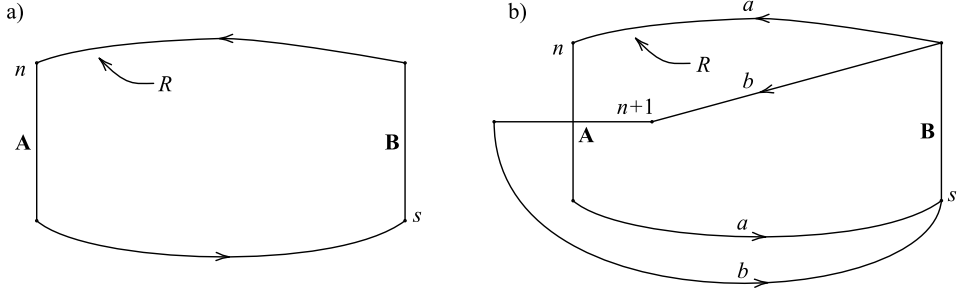}
\caption{Suppose $R$ is a simple closed curve in the boundary of a genus two handlebody $H$ such that $R$ has an R-R diagram
as in this figure, with $n, s>1$, $a,b>0$, $\gcd(a,b)=1$. Then $H[R]$ is the exterior of an $(n, s)$ torus knot with $n>s$(Figure~\ref{PSFFig3aa}a) or
an $((a+b)n+b, s)$ torus knot (Figure~\ref{PSFFig3aa}b). The converse is also true.}
\label{PSFFig3aa}
\end{figure}

\begin{figure}[tbp]
\centering
\includegraphics[width = 0.65\textwidth]{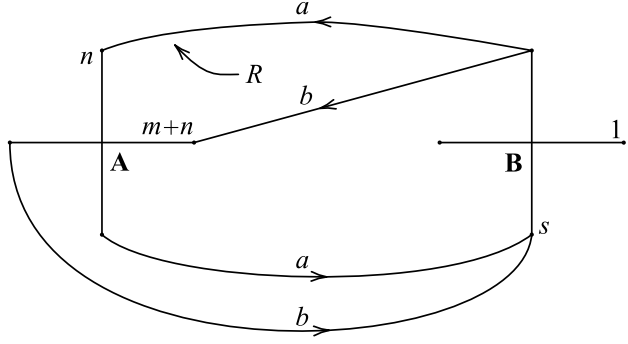}
\caption{Suppose $R$ is simple closed curve in the boundary of a genus two handlebody $H$ such that $R$ has an R-R diagram as in this figure, with $m, n, s > 1$, $a, b > 0$, $\gcd(a,b)$ = $\gcd(m,n) = 1$, and $n(a+b) + bm$ = $sm(a+b) + \delta$ with $\delta = \pm 1$. Then $H[R]$ is the exterior of aa $(sp_0q_0\pm1,s)$ cable of a $(p_0,q_0)$ torus knot in $S^3$, with $\{p_0, q_0\}=\{m, a+b\}$. The converse is also true.}
\label{PSFFig3ab}
\end{figure}

If a simple closed curve $R$ on a genus two handlebody $H$ has the diagram of the form shown in Figure~\ref{PSFFig3aa}, then $R$ is said to be \textit{a torus knot relator}. In particular, if $R$ has the diagram in Figure~\ref{PSFFig3aa}a (resp. Figure~\ref{PSFFig3aa}b), then $R$ is called \textit{a rectangular(resp. non-rectangular) torus knot relator}.
If a simple closed curve $R$ has the diagram of the form shown in Figure~\ref{PSFFig3ab}, then $R$ is called \textit{a cable knot relator}.

\begin{prop}
Suppose $R$ is a simple closed curve in the boundary of a genus two handlebody $H$ such that $H[R]$ embeds in $S^3$.
Let $\{D_A, D_B\}$ be a complete set of cutting disks of $H$ in which the Heegaard diagram $\mathbb{D}$ of $R$ is nonpositive.
If $H[R]$ is the exterior of a torus or a cable of a torus knot in $S^3$, then an edge $e$ can be added in $\mathbb{D}$
such that its endpoints lie at one cutting disk, and $R$ and $e$ can be oriented so that all of their intersections have the
same sign.
\end{prop}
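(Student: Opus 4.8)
The plan is to reduce everything to the explicit R-R diagrams furnished by Theorems~\ref{torus knot relator R} and~\ref{cable knot relator R}. Since $H[R]$ is by hypothesis the exterior of a torus knot or of a cable of a torus knot, $R$ carries an R-R diagram of one of the three forms in Figure~\ref{PSFFig3aa}a, Figure~\ref{PSFFig3aa}b, or Figure~\ref{PSFFig3ab}. In each form the two cutting disks appear as the fat vertices $A$ and $B$, and $R$ is presented as a union of labeled families of parallel strands joining or looping at these vertices, governed by the parameters $n,s,a,b$ (and $m$ in the cable case). First I would pass from such an R-R diagram to its induced Heegaard diagram on the cut-open four-holed sphere, using the background correspondence between R-R and Heegaard diagrams recorded in Section~2 of~\cite{K20d}. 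The preliminary point to settle is that this induced diagram may be taken to be the given $\mathbb{D}$: the diagrams of Figures~\ref{PSFFig3aa} and~\ref{PSFFig3ab}, once cut open, are themselves nonpositive, so the vertices $A,B$ realize a nonpositive Heegaard diagram of $R$, and, invoking the correspondence together with (if necessary) uniqueness of nonpositive diagrams up to the moves of \cite{K20c} and \cite{K20d}, I would identify $\{A,B\}$ with the given pair $\{D_A,D_B\}$.

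With this identification in place the edge $e$ is produced directly from the band structure, and I would treat the three forms separately. In each case one of the two vertices carries a family of parallel strands of $R$ running across a single block, and $e$ is taken to be a short arc joining the two feet of that block, that is, an arc with both endpoints on that one cutting disk. By construction $e$ crosses the strands of $R$ in that block consistently, so after orienting $R$ coherently along the block and fixing an orientation on $e$, every intersection of $R$ with $e$ receives the same sign; this is exactly the coherence (algebraic equals geometric intersection) that reflects the underlying torus or cable structure. The rectangular torus relator of Figure~\ref{PSFFig3aa}a I expect to be essentially immediate, the non-rectangular relator of Figure~\ref{PSFFig3aa}b and the cable relator of Figure~\ref{PSFFig3ab} to need the parameters $a,b,n,s$ (and $m$) tracked explicitly to confirm that the strands $e$ meets are mutually parallel and coherently oriented.

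The hard part will be precisely this orientation bookkeeping. Because $\mathbb{D}$ is nonpositive, the strands of $R$ do \emph{not} all cross the cutting disks with a single sign, so it is not automatic that some arc with both endpoints on one disk meets $R$ coherently; the content of the proposition is exactly that the special arc $e$ dictated by the torus or cable band structure avoids every sign cancellation. Verifying this demands a careful local analysis at each band of the relevant figure, following how the strands counted by $a$, $b$, $n$, and $s$ enter and leave the block at which $e$ is inserted, and, in the cable case, using the congruence $n(a+b)+bm=sm(a+b)\pm1$ to guarantee that no stray strand of opposite sign is dragged across $e$. I anticipate that the genuine difficulty lies in this sign check rather than in the reduction to R-R form, since the classification theorems already hand us the diagram and the remaining work is to certify that the inserted arc is coherent.
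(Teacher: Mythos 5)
Your reduction to the R-R diagrams of Figures~\ref{PSFFig3aa} and~\ref{PSFFig3ab} is the right starting point, but the identification step that follows is wrong, and the error undoes the whole argument. The Heegaard diagram underlying those R-R diagrams is \emph{positive}: with $n,s>1$ and $a,b>0$ (and $m>1$ in the cable case) every strand of $R$ crosses each of the two underlying cutting disks with the same sign. By hypothesis the given diagram $\mathbb{D}$ with respect to $\{D_A,D_B\}$ is \emph{nonpositive}, so $\{D_A,D_B\}$ can never be identified with the pair of disks, call it $\{D'_A,D'_B\}$, underlying the R-R diagram; the two pairs are necessarily distinct, and $(D'_A\cup D'_B)\cap(D_A\cup D_B)$ must contain essential intersections. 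Your proposed edge $e$ (an arc joining the two feet of a block of the R-R diagram) lives in the positive diagram $\mathbb{D}'$, whereas the proposition demands an edge added to the given nonpositive diagram $\mathbb{D}$. Note also that the hypothesis allows $\{D_A,D_B\}$ to be \emph{any} pair of cutting disks whose diagram of $R$ is nonpositive, so an argument tied to one particular pair cannot suffice; and the ``sign bookkeeping'' you flag as the hard part is empty in your setting, since in the R-R diagrams of the classification theorems all intersections are already coherent.

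The missing idea is to play the two pairs of disks against each other. Since, say, $D'_A\cap(D_A\cup D_B)$ has essential intersections, take an outermost subdisk $D$ of $D'_A$ cut off by these intersection arcs. Its boundary consists of an arc lying in $D_A$ or $D_B$ together with an arc $D\cap\partial H$ on the surface $\partial H$ whose two endpoints lie on that single cutting disk; this surface arc is the edge $e$ to be added to $\mathbb{D}$. Because $e$ is a subarc of $\partial D'_A$ and the diagram $\mathbb{D}'$ is positive, all intersections of $R$ with $e$ have the same sign once $R$ and $e$ are oriented. This outermost-disk argument, which transfers the coherence of the positive diagram into the given nonpositive one, is the entire content of the paper's proof; no parameter tracking or use of the congruence $n(a+b)+bm=sm(a+b)\pm1$ is needed.
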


\begin{proof}
Theorems~\ref{torus knot relator R} and \ref{cable knot relator R} imply that $R$ has
an R-R diagram of the form in Figure~\ref{PSFFig3aa} or \ref{PSFFig3ab}. Let $\{D'_A, D'_B\}$ be a complete set of cutting disks of $H$ in which the Heegaard diagram $\mathbb{D}'$ of $R$ underlies the R-R diagram of $R$ in Figure~\ref{PSFFig3aa} or \ref{PSFFig3ab}. Then $\mathbb{D}'$ is positive, i.e., all of the intersections in $R\cap \partial D'_A$ and $R\cap \partial D'_B$ have the same sign.

Since the Heegaard diagram $\mathbb{D}$ of $R$ with respect to $\{D_A, D_B\}$ is nonpositive, $(D'_A\cup D'_B)\cap (D_A\cup D_B)$
contains essential intersections. We may assume that $D'_A\cap(D_A\cup D_B)$ has essential intersections. Then
there must exist an outermost subdisk $D$ of $D'_A$ cut off by essential intersections of $D'_A\cap(D_A\cup D_B)$.
Since all of the intersections in $R\cap \partial D'_A$ have the same sign,
the arc $D\cap\partial H$ has only one type of signed intersection with $R$. This completes the proof.
\end{proof}

The parameters $m, n,$ and $s$ in the diagram of a torus or a cable knot relator $R$ in Figure~\ref{PSFFig3aa} or \ref{PSFFig3ab} are restricted to positive integers. The following theorem shows that if the parameters are extended to any integers, then $H[R]$ is still nonhyperbolic.

\begin{thm}\label{at most two bands and one band}
Suppose $R$ is a simple closed curve in the boundary of a genus two handlebody $H$ with an R-R diagram of the form shown in Figure~\emph{\ref{PSFFig3ae}} with $a, b\geq 0$
and $m, n, s\in \mathbb{Z}$.

If $H[R]$ embeds in $S^3$, then $R$ is either a primitive curve, or a torus or a cable knot relator in $H$. Therefore if $k$ is
a knot whose exterior is homeomorphic to $H[R]$, then $k$ is either the unknot, a torus knot or a cable of a torus knot.
\end{thm}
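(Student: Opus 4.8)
The plan is to prove the trichotomy by a reduction analysis on the parameters $a,b,m,n,s$ of Figure~\ref{PSFFig3ae}, using the embedding hypothesis to eliminate the genuinely hyperbolic configurations and the earlier Theorems~\ref{torus knot relator R} and~\ref{cable knot relator R} to identify the survivors. The first step is to record the homological consequence of $H[R]\hookrightarrow S^3$. Since a knot exterior in $S^3$ has first homology $\mathbb{Z}$, and attaching the $2$-handle kills the class $[R]$ in $H_1(H)\cong\mathbb{Z}^2$, the class $[R]$ must be primitive in $H_1(H)$; reading $[R]$ off the diagram in terms of $a,b,m,n,s$ therefore produces a coprimality relation that will both supply the $\gcd$ hypotheses required by the earlier theorems and exclude those parameter combinations for which $H[R]$ fails to be a knot exterior.

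Next I would remove the negative parameters by symmetry. An orientation-reversing self-homeomorphism of $H$ sends $R$ to a curve whose R-R diagram is the mirror of the original, reversing the signs of the integer parameters $m,n,s$ while leaving the nonnegative strand counts $a,b$ unchanged (possibly after interchanging the two handles). Because the mirror of a torus, respectively cable, knot exterior is again a torus, respectively cable, knot exterior, and mirroring preserves primitivity of a curve, it suffices to treat the case $m,n,s\ge 0$.

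The heart of the argument is then the analysis of the vanishing and unit values of the parameters, exactly the range excluded by the strict inequalities $m,n,s>1$, $a,b>0$ in the earlier theorems. If $a=0$ or $b=0$, or if $m=0$, the corresponding band of Figure~\ref{PSFFig3ae} disappears and, after an evident handle slide, the diagram collapses to the torus-knot form of Figure~\ref{PSFFig3aa}, so that $R$ is a torus knot relator. If instead one of the surviving indices equals $0$ or $1$, the companion or cable index degenerates, and I would show in each such case that some cutting disk of $H$ meets $R$ transversely in a single point; then $R$ is primitive, $H[R]$ is a solid torus, and $k$ is the unknot. Once all of these boundary cases are disposed of, the remaining parameters satisfy $a,b>0$ and $m,n,s>1$ together with the coprimality relation from the first step, and Theorems~\ref{torus knot relator R} and~\ref{cable knot relator R} apply verbatim to identify $R$ as a torus or a cable knot relator.

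The main obstacle will be precisely these degenerate cases, since the earlier theorems are unavailable once a parameter vanishes or equals a unit: the combinatorial type of the R-R diagram changes, and I must verify by hand---either by exhibiting a meridional cutting disk or by computing the effect of the $2$-handle on $\pi_1(H)$---that the resulting manifold is a solid torus or a torus-knot exterior. The delicate point is to confirm that the embedding hypothesis $H[R]\hookrightarrow S^3$ is exactly what prevents the residual parameter combinations from producing a new hyperbolic manifold; I expect this to reduce to the coprimality relation established at the outset, but checking that this single relation covers every boundary case uniformly is where the real work lies.
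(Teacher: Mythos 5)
Your proposal has a genuine gap, and it is located exactly where the real difficulty of this theorem lies: the sign analysis. Your symmetry reduction is false. A homeomorphism of $H$ inducing $A\mapsto A^{-1}$ flips the signs of the two $A$-handle labels $n$ and $n+m$ \emph{simultaneously}, and $B\mapsto B^{-1}$ flips $s$; so one may normalize $m,s\geq 0$ (as the paper does), but one cannot normalize the sign of $n$ relative to $n+m$. Mirroring therefore does not reduce the theorem to the case $m,n,s\geq 0$, and the mixed-sign configurations are not degenerate curiosities but the heart of the matter. In the paper's proof, when $s>1$ and $n(n+m)<0$ the underlying Heegaard diagram of $R$ is nonpositive, and the embedding hypothesis is brought to bear through a wave argument: surgery on $R$ along a vertical wave produces a meridian of $H[R]$ representing $B^s$ in $\pi_1(H)$, which is impossible in $S^3$ since $s>1$. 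When $n(n+m)=0$ (a $0$-connection with $s>1$), the paper performs a change of cutting disks realizing $A\mapsto AB^{-(\rho+1)s}$ in a hybrid diagram and then invokes Theorem 5.1 of \cite{K20d} to exclude three bands of connections. None of these ideas (waves, nonpositivity, changes of cutting disks, the three-band obstruction) appears in your plan, and no amount of homological bookkeeping replaces them: primitivity of $[R]$ in $H_1(H)$ yields $\gcd(a,b)=1$ and $\gcd(m,a+b)=1$, but not the Diophantine condition $n(a+b)+bm=sm(a+b)\pm 1$ in Theorem~\ref{cable knot relator R}, nor any contradiction in the mixed-sign cases.

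A secondary error: your treatment of the unit cases asserts that whenever a surviving index equals $0$ or $1$, some cutting disk meets $R$ once and $R$ is primitive. That is not what happens. For $s=1$ and $m>1$ (with $ab>0$), the paper first changes cutting disks, realizing $B\mapsto BA^{-n}$, and the resulting diagram exhibits $R$ as a \emph{torus knot relator}, not a primitive curve; and for $s=1$, $m=1$, primitivity is not witnessed by any disk of the given system but follows from a Euclidean-style reduction $a=\rho b+r$ together with Lemma 3.5 of \cite{K20c}. So even in the all-positive range your case analysis both misclassifies outcomes and omits the change-of-cutting-disks technique that drives the paper's argument.
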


\begin{figure}[tbp]
\centering
\includegraphics[width = 0.6\textwidth]{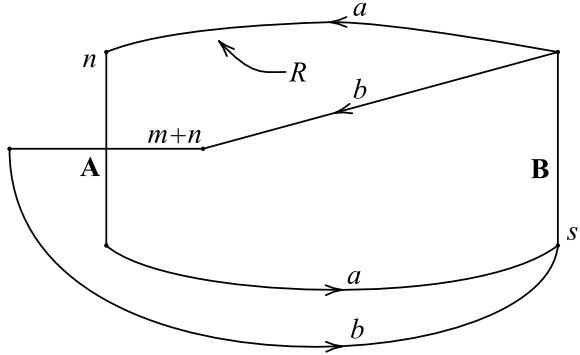}
\caption{An R-R diagram of $R$ where $R$ has only one band of connections in one handle and has at most two bands of connections in the other handle and $a, b\geq 0$
and $m, n, s\in \mathbb{Z}$.}
\label{PSFFig3ae}
\end{figure}

\begin{proof}
Consider the R-R diagram of $R$ in Figure~\ref{PSFFig3ae}. Then we may assume without loss of generality that $m, s\geq0$.

If $m=0$, then $n=1$. This implies that $R=(AB^s)^{a+b}$ in $\pi_1(H)$. Since $H[R]$ embeds in $S^3$, for the homological reason, $a+b=1$ and thus $R$ is primitive. If $s=0$, then $R=A^{(a+b)n+bm}$ in $\pi_1(H)$. For the homological reason, $|(a+b)n+bm|=1$ and $R$ is primitive. If $ab=0$, then without loss of generality, we may assume that $a=1$ and $b=0$ and thus the diagram of $R$ is a rectangular torus knot relator
unless $|n|=1$ or $s=1$, in which case $R$ is primitive.

Thus we may assume that $m, s > 0$ and $ab>0$. Let $a=\rho b +r$, where $\rho\geq 0$ and $0\leq r<b$ (if $r=0$, then $\rho>0$ and $b=1$). We divide the argument into two subcases: (1) $s=1$ and (2) $s>1$.

\begin{figure}[tbp]
\centering
\includegraphics[width = 1\textwidth]{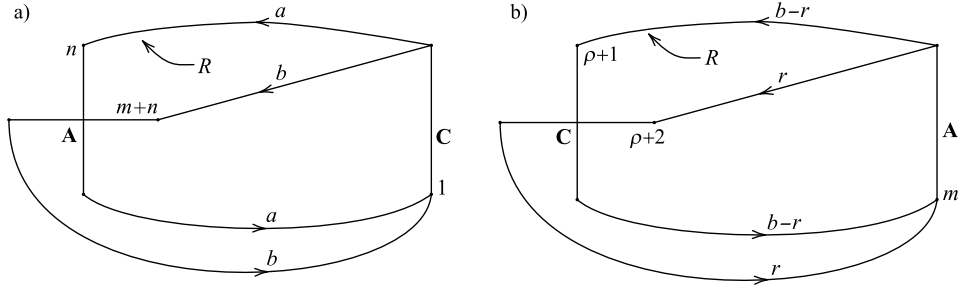}
\caption{An R-R diagram of $R$ with $s=1$ and new R-R diagram of $R$ after the change of cutting disks.}
\label{PSFFig3ac}
\end{figure}

(1) Suppose $s=1$. Then the R-R diagram of $R$ is given in Figure~\ref{PSFFig3ac}a. It follows that $R$ is the product of two subwords $BA^{n}$ and $BA^{m+n}$ with $|BA^{n}|=a$ and $|BA^{m+n}|=b$. Here, $|BA^{n}|$ and $|BA^{m+n}|$ denote the total number of appearances of $BA^{n}$ and $BA^{m+n}$ respectively in the word of $R$ in $\pi_1(H)=F(A,B)$. We perform a change of cutting disks of $H$ underlying the diagram, which induces an automorphism $\theta$ of $\pi_1(H)$ that takes $B \mapsto BA^{-n}$ and leaves $A$ fixed. Then the new R-R diagram realized by the resulting Heegaard diagram of $R$ is shown as in Figure~\ref{PSFFig3ac}b. If $m>1$ in the diagram, then $R$ belongs to a torus knot relator. If $m=1$, then  when $r=0$, $R=BA^{\rho+1}$ in $\pi_1(H)$ and thus is primitive, and when $r>0$, by Lemma 3.5 of \cite{K20c}, $R$ is also primitive in $H$.

(2) Suppose $s>1$. If $n(m+n)>0$, then by Theorems~\ref{torus knot relator R} and \ref{cable knot relator R} $R$ is a torus or a cable knot relator. If $n(m+n)<0$, then the underlying Heegaard diagram of $R$ is nonpositive and thus there exists a vertical
wave based at $R$. By one of the results of \cite{B20}, which is originated from \cite{B93}, one meridian curve of $H[R]$ is obtained by the surgery on $R$ along the vertical wave. We can see from the R-R diagram of $R$ that such meridian curve of $H[R]$ is the curve whose first homotopy is $B^s$ in $\pi_1(H)$. This is impossible since $s>1$ and $H[M]$ embeds in $S^3$. Therefore $n(m+n)\geq0$.

\begin{figure}[tbp]
\centering
\includegraphics[width = 1\textwidth]{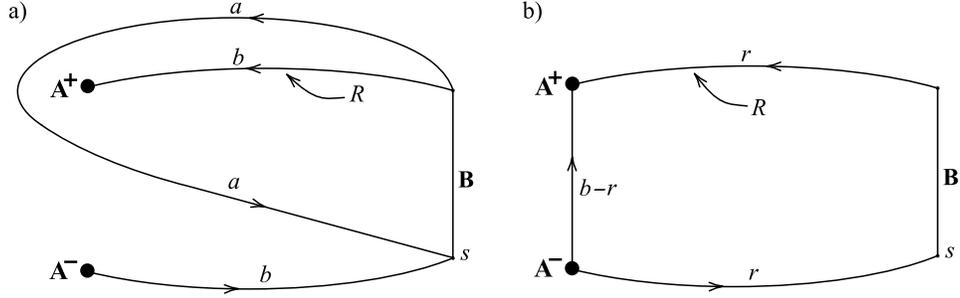}
\caption{The Hybrid diagram corresponding to the R-R diagram in Figure~\ref{PSFFig3ae} with $(m, n)=(1,0)$ and
the hybrid diagram after performing a change of cutting disks inducing an automorphism of $\pi_1(H)$ that takes $A \mapsto AB^{-(\rho+1)s}$. }
\label{PSFFig3ad}
\end{figure}

Now suppose $n(m+n)=0$. Note that both $n$ and $m+n$ cannot be $0$. Therefore without loss of generality we may assume that $n+m=1$ and then $n=0$.
Since $n=0$, there is a $0$-connection in the diagram of $R$. If $r=0$ in the equation $a=\rho b+r$, then $b=1$ and $R=AB^{(\rho+1)s}$, which is primitive.
So we may assume that $r>0$ in the equation $a=\rho b+r$.

Now we use the argument of the hybrid diagram. Its hybrid diagram of $R$ corresponding to the R-R diagram in Figure~\ref{PSFFig3ae} with $(m, n)=(1,0)$ is illustrated in Figure~\ref{PSFFig3ad}a. In its hybrid diagram, we drag $\rho+1$ times the vertex $A^-$
together with the edges meeting with the vertex $A^-$ over the $s$-connection on the $B$-handle. This performance
corresponds to a change of cutting disks inducing an automorphism of $\pi_1(H)$ that takes $A \mapsto AB^{-(\rho+1)s}$.
The resulting hybrid diagram of $R$ is depicted in Figure~\ref{PSFFig3ad}b, where there is only one band of connections labelled by $s$ in the $B$-handle.
For the $A$-handle in the corresponding R-R diagram of $R$, since $R$ intersects the cutting disk $D_A$ positively and $b-r>0$, $R$ is positive and thus there are at most three bands of connections and one of the bands of connections has a label greater than $1$. If there are three bands of connections in $A$-handle, then by Theorem 5.1 in \cite{K20d}, $H[R]$ cannot embed in $S^3$. If there are at most two bands of connections in $A$-handle, then $R$ is
a torus or a cable knot relator.
\end{proof}

Theorem~\ref{at most two bands and one band} says that if $R$ has only one connection on one handle and has at most two connections on the other handle, then
$H[R]$ is not hyperbolic.

\section{Presentations of a torus or a cable knot relator}\label{Presentations of a torus or a cable a torus knot relator}

Suppose $H$ is a genus two handlebody with a complete set of cutting disks $\{D_A, D_B\}$ with $\pi_1(H)=F(A,B)$, where
the generators $A$ and $B$ are dual to $D_A$ and $D_B$ respectively. Suppose $R$ is a simple closed curve on $\partial H$ such that $H[R]$ embeds in $S^3$.
Then $R$ represents a relator in $\pi_1(H)=F(A,B)$, i.e., $\pi_1(H[R])=\langle A, B \,|\, R\rangle$.
In this section, we deal with some backgrounds of two generator presentations and equivalent presentation conditions for $R$ being a torus or a cable knot relator.

\subsection{Backgrounds of two generator presentations} \hfill

\smallskip

Suppose $\mathcal{P}$ is a finite presentation of a two generator set $G$. Let $F$ be the free group freely generated by the two generators in $G$.
If $\mathcal{R}$ is a relator of a presentation $\mathcal{P}$ which is freely and cyclically reduced, then the \emph{algebraic length} of $\mathcal{R}$ is the total number of letters which appear in $\mathcal{R}$.
The \emph{algebraic length} of $\mathcal{P}$, denoted by $|\mathcal{P}|$, is the sum of the lengths of all of the relators in $\mathcal{P}$. The presentation $\mathcal{P}$ has \emph{minimal algebraic length} under automorphisms of $F$ if applying any automorphism of $F$ to $\mathcal{P}$ and then freely and cyclically reducing the relators of the result, yields a presentation $\mathcal{P}'$ with $|\mathcal{P}'| \geq |\mathcal{P}|$.

For a free group $F(A, B)$ generated by $A$ and $B$, the automorphisms
\begin{equation}\label{Four Whitehead automorphisms1}
\begin{aligned}
(A,B) \mapsto (AB^{\pm1},B)\\
(A,B) \mapsto (A,BA^{\pm1})
\end{aligned}
\end{equation}

\noindent of $F(A, B)$ are called \textit{T-transformations} or \textit{Whitehead automorphisms}.

Suppose $\mathcal{P}$ is a presentation which has minimal algebraic length. If a Whitehead automorphism $\theta$ transforms $\mathcal{P}$ into
a presentation $\mathcal{P}'$ such that $|\mathcal{P}'| = |\mathcal{P}|$, then $\theta$ is called a \textit{level-T-transformation} or simply a \textit{level-transformation}.

The following theorems are well-known results of Whitehead \cite{W36}.

\begin{thm}
\label{min length under auts}
Let $\mathcal{P}$ and $F$ be defined as above. Suppose that all the relators in $\mathcal{P}$ are freely
and cyclically reduced. If there is an automorphism of the free group $F$ which reduces the length of $\mathcal{P}$, then there exists
a Whitehead automorphism which will also reduce the length of $\mathcal{P}$.
\end{thm}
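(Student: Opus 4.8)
The plan is to recast the statement in its standard Whitehead form and prove it by \emph{peak reduction}. Call the relator set $W$ of $\mathcal{P}$ \emph{Whitehead-minimal} if no single Whitehead automorphism from \eqref{Four Whitehead automorphisms1} strictly decreases $|W|$ after free and cyclic reduction. The assertion to be proved is exactly that a word which is not globally minimal (here $\phi(W)$ is strictly shorter, so $W$ is not minimal under $\mathrm{Aut}(F)$) cannot be Whitehead-minimal. Since the T-transformations of \eqref{Four Whitehead automorphisms1}, together with the length-preserving relabelings of $F$ (inverting a generator, interchanging $A$ and $B$), generate $\mathrm{Aut}(F)$, I would first factor the given reducing automorphism as a product of Whitehead automorphisms $\phi = \tau_m\cdots\tau_1$ and set $W_i = \tau_i\cdots\tau_1(W)$, freely and cyclically reduced. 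This produces a chain $W = W_0, W_1, \dots, W_m = \phi(W)$ with $|W_m| < |W_0|$, and the task becomes to extract from it a single Whitehead automorphism that lowers $|W|$.

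The computational tool is the Whitehead graph of $W$: on the four vertices $A, A^{-1}, B, B^{-1}$ one records, for each cyclic subword of length two in each relator, a single edge, so that the total number of edges equals $|W|$. A Whitehead automorphism of the second kind is encoded by a multiplier $a\in\{A^{\pm1},B^{\pm1}\}$ and a vertex set $S$ with $a\in S$ and $a^{-1}\notin S$, and a direct edge-by-edge computation expresses $|\tau_{(S,a)}(W)| - |W|$ through the number of edges of the Whitehead graph crossing the partition $(S,S^{c})$ and the degree of the vertex $a$. Consequently $\tau_{(S,a)}$ strictly reduces $|W|$ precisely when this cut is cheaper than $\deg(a)$, so that Whitehead-minimality of $W$ translates into a purely combinatorial minimum-cut condition on a graph with only four vertices.

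The heart of the argument is then the Peak Reduction Lemma: whenever the chain exhibits a peak, that is an index $i$ with $|W_{i-1}|\le |W_i|\ge |W_{i+1}|$ and $|W_i|$ strictly above at least one neighbor, the consecutive pair $\tau_{i+1},\tau_i$ can be rewritten as a product of Whitehead automorphisms all of whose intermediate words are strictly shorter than $W_i$, thereby lowering the peak. Iterating removes every peak, leaving a chain whose length profile is a single valley (non-increasing, then non-decreasing). Because $|W_m|<|W_0|$, the descending side of this valley is nonempty, and the peak-reduction bookkeeping localizes the first strict drop to a single Whitehead automorphism applied to $W$ itself; hence $W$ is not Whitehead-minimal, which is the claim. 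I expect the Peak Reduction Lemma to be the sole real obstacle: it rests on a finite but delicate case analysis of how the defining data $(S,a)$ and $(T,b)$ of two successive automorphisms interact according to whether their multipliers coincide, are inverse, or are independent, and whether $S$ and $T$ are nested, crossing, or disjoint, with the replacement automorphisms built from the sets $S\cap T$ and $S\cup T$. Since $F=F(A,B)$ has only four vertices, the number of genuinely distinct configurations is small; alternatively one may follow Whitehead's original topological route, realizing $F$ as the fundamental group of a genus two handlebody (or of a connected sum of copies of $S^1\times S^2$), representing each relator by an essential curve and each T-transformation by an interchange of a disk system, and deducing peak reduction from innermost-disk arguments, which sits naturally in the geometric framework of the present paper.
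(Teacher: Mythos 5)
The paper offers no proof of this theorem for you to be compared against: it is stated as a well-known result of Whitehead and simply cited to \cite{W36}. So your argument cannot match ``the paper's approach''; what it does match is the standard combinatorial proof in the literature (the peak-reduction argument of Higgins--Lyndon and Rapaport, the one found in Lyndon--Schupp), whereas Whitehead's own proof was topological, along the lines you sketch as an alternative at the end. The skeleton you give is sound: the transformations in (\ref{Four Whitehead automorphisms1}) together with the length-preserving relabelings (inverting a generator, exchanging $A$ and $B$) do generate $\mathrm{Aut}(F(A,B))$, and the relabelings can be absorbed since they do not affect length; the change of cyclic length under a Whitehead automorphism is indeed computable from the Whitehead graph by a cut-versus-degree count; and once every peak is removed from the chain $W_0,\dots,W_m$, the hypothesis $|W_m|<|W_0|$ forces a strict drop at the very first step, which is the single reducing Whitehead automorphism the theorem asks for.

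Two things keep this from being a complete proof. First, and mainly: the Peak Reduction Lemma is not a technical preliminary, it is the entire mathematical content of the theorem, and your proposal defers it wholesale to ``a finite but delicate case analysis'' without carrying out a single case. In rank two the analysis is genuinely small --- with only four vertices the cut-versus-degree formula reduces each configuration of two interacting multipliers $(S,a)$, $(T,b)$ to an inequality count --- but until those cases are written down, what you have is a correct plan, not a proof. Second, a caveat about your final step: the description of the reduced chain as ``non-increasing, then non-decreasing'' is too weak to conclude. The profile $|W_0|=|W_1|>|W_2|$ is such a valley with nonempty descending side, yet no single $\tau_i$ in it reduces $W_0$. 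What you need, and what your (correctly chosen) inclusive definition of peak does deliver --- since $|W_{i-1}|=|W_i|>|W_{i+1}|$ counts as a peak --- is that the reduced profile is strictly decreasing, then flat, then strictly increasing; only then does $|W_m|<|W_0|$ force $|W_1|<|W_0|$. Make that stronger statement explicit, and also name the complexity that decreases under each peak removal (say, the maximum of the $|W_i|$, then the number of indices attaining it) so that the iteration is seen to terminate.
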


\begin{thm}
\label{finite sequence of level-T-transformation}
Let $\mathcal{P}$, $G$, and $F$ be defined as above and $\mathcal{P}'$ be another finite presentation of $G$
such that the relators of both $\mathcal{P}$ and $\mathcal{P}'$ are freely
and cyclically reduced. Suppose both $\mathcal{P}$ and $\mathcal{P}'$ have minimal
length, and suppose there exists an automorphism $\pi$ of $F$ such that $\pi(\mathcal{P})=\mathcal{P}'$. Then there exists
a finite sequence of level-transformations which also transforms $\mathcal{P}$ to $\mathcal{P}'$.
\end{thm}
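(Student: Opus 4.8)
The plan is to reduce the statement to Whitehead's peak-reduction lemma and then exploit the minimality of both endpoints. First I would write the given automorphism $\pi$ as a finite product of Whitehead automorphisms $\pi = \theta_k\circ\cdots\circ\theta_1$; this is possible because the $T$-transformations listed in \eqref{Four Whitehead automorphisms1}, together with the permutation and the inversions of the generators, generate all of $\mathrm{Aut}(F)$. Setting $\mathcal{P}_0=\mathcal{P}$ and letting $\mathcal{P}_i$ be the freely and cyclically reduced presentation $\theta_i(\mathcal{P}_{i-1})$, we obtain a chain $\mathcal{P}=\mathcal{P}_0,\mathcal{P}_1,\dots,\mathcal{P}_k=\mathcal{P}'$ of presentations of $G$, each obtained from the previous one by a single Whitehead automorphism. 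It then suffices to show that this chain can be rechosen so that $|\mathcal{P}_i|=|\mathcal{P}|$ for every $i$: once the length profile $i\mapsto|\mathcal{P}_i|$ is constant and equal to the common minimal value $|\mathcal{P}|=|\mathcal{P}'|$, each $\theta_i$ is by definition a level-transformation, which is precisely the desired conclusion.

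The engine is Whitehead's peak-reduction lemma, applied to the finite family of cyclically reduced words making up a presentation (with $|\cdot|$ the total length). Call an index $i$ a \emph{peak} of the chain if $|\mathcal{P}_{i-1}|<|\mathcal{P}_i|>|\mathcal{P}_{i+1}|$. The lemma asserts that at such a peak the two-step passage $\mathcal{P}_{i-1}\to\mathcal{P}_i\to\mathcal{P}_{i+1}$ can be replaced by a finite sequence of Whitehead automorphisms carrying $\mathcal{P}_{i-1}$ to $\mathcal{P}_{i+1}$ through intermediate presentations all of length strictly less than $|\mathcal{P}_i|$. Applying this repeatedly lowers the maximum of the profile, or at least reduces the number of indices attaining it, and iterating removes all strict interior peaks. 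Since the endpoints $\mathcal{P}_0$ and $\mathcal{P}_k$ already realize the minimum $|\mathcal{P}|$ by hypothesis, and Theorem~\ref{min length under auts} forbids any value from dropping below that minimum, a profile with equal minimal endpoints and no strict interior peak must be constant. The one remaining subtlety is a plateau at which the maximum is attained on a flat stretch; I would dispose of this either by treating the two ends of each maximal flat block as a peak-configuration to which the equal-length refinement of the lemma applies, or equivalently by inducting on the number of indices attaining the current maximum. Either way every $|\mathcal{P}_i|$ equals $|\mathcal{P}|$ and each $\theta_i$ is a level-transformation.

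The substantive content, and the step I expect to be the main obstacle, is the peak-reduction lemma itself, which is not a formal consequence of Theorem~\ref{min length under auts}. That earlier theorem gives only the \emph{one-sided} statement that a non-minimal presentation can be shortened by a single Whitehead automorphism; promoting it to the \emph{two-sided} recombination at a peak requires Whitehead's combinatorial analysis of how one Whitehead automorphism changes the length of a cyclic word, carried out through the Whitehead graph and its cut vertices (equivalently, the convexity-type estimate showing that at a peak the two automorphisms can be factored through a strictly shorter word). In the rank-two setting $F=F(A,B)$ relevant here, where the Whitehead automorphisms reduce to the four $T$-transformations of \eqref{Four Whitehead automorphisms1} together with the generator swap and inversions, this graph-theoretic analysis simplifies to a manageable finite case check; but it remains the crux, and once it is established the flattening argument of the previous paragraph is routine bookkeeping.
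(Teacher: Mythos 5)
The paper never actually proves this statement: it and Theorem~\ref{min length under auts} are simply quoted as ``well-known results of Whitehead \cite{W36},'' so there is no internal argument to compare yours against, only the classical proof the citation points to. Your route is precisely that classical route (Whitehead's theorem in its combinatorial, peak-reduction form, as in Rapaport and Higgins--Lyndon): factor $\pi$ into Whitehead automorphisms, use peak reduction to remove strict interior peaks of the length profile, treat flat stretches at the maximum with the version of the lemma that needs only one of the two inequalities at a peak to be strict, and conclude from minimality of both endpoints that the profile is constant, so that every factor is a level move. This is structurally correct, including the plateau bookkeeping, and your acknowledgement that the peak-reduction lemma is the crux and is \emph{not} a formal consequence of Theorem~\ref{min length under auts} is accurate. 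Since you do not prove that lemma, what you have is a correct reduction of the theorem to the same classical source the paper itself leans on -- a fair match for the paper's own standard of rigor here, though not a self-contained proof.

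One caveat is worth recording. To factor $\pi$ you must allow the generator swap and the inversions $A \mapsto A^{-1}$, $B \mapsto B^{-1}$ alongside the four T-transformations of (\ref{Four Whitehead automorphisms1}); these are length-preserving but are not ``level-transformations'' under the paper's definition, which reserves that term for the four T-transformations alone. This is a defect of the statement rather than of your argument: taken literally, the theorem is false. For instance, $\langle A,B \mid A\rangle$ and $\langle A,B \mid B\rangle$ are both of minimal length and are related by the swap automorphism, yet any level T-transformation applied to the first presentation fixes its relator $A$ (the moves $B\mapsto BA^{\pm1}$ fix $A$, while $A\mapsto AB^{\pm1}$ increases the length), so no sequence of level T-transformations reaches the second. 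The correct statement -- and what your proof delivers -- is the conclusion up to permutations and inversions of the generators, which is evidently how the paper intends it to be used: its applications in Section 3 are all phrased ``up to the homeomorphisms of $H$ inducing the automorphisms exchanging $A$ and $B$, and replacing $A^{-1}$ by $A$ or $B^{-1}$ by $B$.''
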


Suppose $\mathcal{P} = \langle \, A,B \,|\, \mathcal{R}_1, \dots ,\mathcal{R}_m \,\rangle$ is a two generator finitely related presentation. The \emph{Whitehead graph} $G_W(\mathcal{P})$ of $\mathcal{P}$ is defined as follows. First, $G_W(\mathcal{P})$ has four vertices, labeled $A^+$, $A^-$, $B^+$, and $B^-$. Then $G_W(\mathcal{P})$ has one unoriented edge joining vertex $x$ and vertex $y^{-1}$ of $G_W(\mathcal{P})$ corresponding to each appearance of $xy$ as a subword of a relator of $\mathcal{P}$. Note that if $\mathcal{R}$ is a relator of $\mathcal{P}$ and $\mathcal{R}$ = $x_1x_2 \dots x_n$, with $x_i \in \{A, A^-, B, B^-\}$, then the pair $x_n x_1$ also contributes an edge to $G_W(\mathcal{P})$. Note also that if $\mathcal{R} = X$, with $X$ = $A^{\pm 1}$ or $B^{\pm 1}$, then $\mathcal{R}$ contributes one edge, joining vertex $X^+$ and vertex $X^-$, to $G_W(\mathcal{P})$.

A \emph{loop} in $G_W(\mathcal{P})$ is an edge of $G_W(\mathcal{P})$ with both of its endpoints at the same vertex of $G_W(\mathcal{P})$. Thus, if the relators of $\mathcal{P}$ are freely and cyclically reduced, then each edge of $G_W(\mathcal{P})$ has its endpoints at distinct vertices of $G_W(\mathcal{P})$, and there are no loops in $G_W(\mathcal{P})$.

A finite graph $G$ is \emph{connected} if given any two distinct vertices, say $V$ and $V'$, of $G$, there is a path consisting of a finite union of edges of $G$ connecting $V$ to $V'$.

A vertex $V$ of a finite connected graph $G$ is a \emph{cut-vertex} of $G$ if deleting $V$, and the edges of $G$ which meet $V$, from $G$ yields a graph $G'$, which is not connected.

\begin{figure}[tbp]
\centering
\includegraphics[width = 1.0\textwidth]{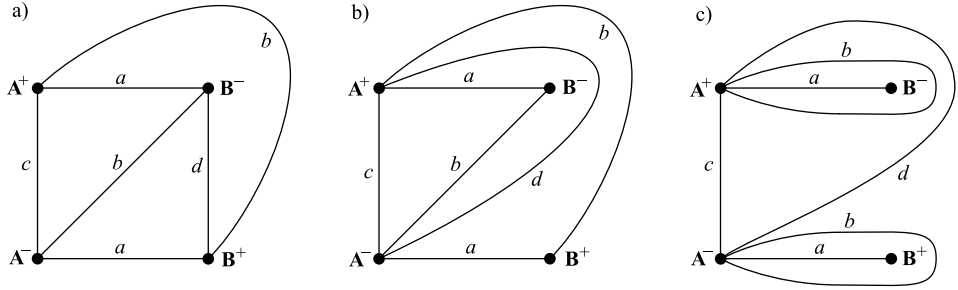}
\caption{The three types of graphs of Heegaard diagrams of simple closed curves on
the boundary of a genus two handlebody $H$ which have essential intersections with cutting disks $D_A$ and $D_B$.}
\label{DPCFig8a-3}
\end{figure}

\begin{prop}\label{min length conditions}
Suppose $\mathcal{P}$ is a two-generator presentation, which has a Whitehead graph of the form of Figure~\emph{\ref{DPCFig8a-3}a}. Then
\begin{enumerate}
\item $\mathcal{P}$ has minimal algebraic length under automorphisms if and only if $|a-b| \leq c$ and $|a-b| \leq d$.
\item $\mathcal{P}$ has a level-transformation if and only if either $|a-b|=c$ or $|a-b|=d$.
\end{enumerate}
Furthermore, if $|a-b|<c$ and $|a-b|=d(|a-b|=c$ and $|a-b|<d$, resp.$)$, then only level-transformations are the Whitehead automorphisms $(A,B) \mapsto (A,BA^{\pm1})$
$((A,B) \mapsto (AB^{\pm1},B)$, resp.$)$.
\end{prop}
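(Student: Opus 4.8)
The plan is to reduce the entire statement to a single computation: how the algebraic length of $\mathcal{P}$ changes under each of the four T-transformations of \eqref{Four Whitehead automorphisms1}, expressed in terms of the edge multiplicities of $G_W(\mathcal{P})$. By Theorem~\ref{min length under auts}, $\mathcal{P}$ has minimal algebraic length under automorphisms of $F$ if and only if no Whitehead automorphism strictly reduces its length; and, once minimality is known, a level-transformation is exactly a Whitehead automorphism that leaves the length unchanged rather than increasing it. Hence everything is governed by the four signed length-changes $\Delta_{A\mapsto AB^{\pm1}}$ and $\Delta_{B\mapsto BA^{\pm1}}$, and the problem becomes purely combinatorial.

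First I would record the standard cancellation count for an elementary Whitehead automorphism acting on a cyclically reduced relator. For $\phi\colon A\mapsto AB^{-1}$ (fixing $B$), each occurrence of $A^{\pm1}$ lengthens the word by one before reduction, and the only cancellations occur at the subwords $AB$ and $B^{-1}A^{-1}$, i.e.\ precisely at the edges of $G_W(\mathcal{P})$ joining $A^+$ to $B^-$; one checks that these length-two substitutions produce no secondary cancellation, so the count is exact. This gives $\Delta_{A\mapsto AB^{-1}}=\deg A^+-2\,e(A^+,B^-)$, and symmetrically for the other three transformations. I would then use the degree balances $\deg A^+=\deg A^-$ and $\deg B^+=\deg B^-$, which hold for the Whitehead graph of any relator, to force $e(A^+,B^+)=e(A^-,B^-)$ and $e(A^+,B^-)=e(A^-,B^+)$. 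Labelling the multiplicities of Figure~\ref{DPCFig8a-3}a as $a=e(A^+,B^+)$, $b=e(A^+,B^-)$, $c=e(A^+,A^-)$, and $d=e(B^+,B^-)$, the four changes collapse to
\[ \Delta_{A\mapsto AB^{\mp1}}=c\pm(a-b),\qquad \Delta_{B\mapsto BA^{\mp1}}=d\pm(a-b). \]
Thus the two $A$-transformations realize the values $c\pm|a-b|$ and the two $B$-transformations realize $d\pm|a-b|$. Correctly identifying the labels $a,b,c,d$ with these edge-classes, and justifying the forced symmetry of the graph, is the step I expect to require the most care.

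With the formulas in hand the three assertions follow at once. Minimality holds iff all four changes are nonnegative, and since the binding constraints are the smaller values $c-|a-b|$ and $d-|a-b|$, this is equivalent to $|a-b|\le c$ and $|a-b|\le d$, giving (1). Under minimality a level-transformation is one with length-change exactly $0$, which can only be one of these two smaller values, so a level-transformation exists iff $c-|a-b|=0$ or $d-|a-b|=0$, i.e.\ iff $|a-b|=c$ or $|a-b|=d$, giving (2). Finally, if $|a-b|<c$ and $|a-b|=d$, then both $A$-changes $c\pm|a-b|$ are strictly positive while the $B$-change $d-|a-b|$ vanishes; hence no $A$-transformation is level and the only level-transformations are $(A,B)\mapsto(A,BA^{\pm1})$, the case $|a-b|=c$, $|a-b|<d$ being symmetric. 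The only remaining points to dispatch are the degenerate case $a=b$, where both $B$-changes (or both $A$-changes) vanish simultaneously but still lie in the asserted family, and the verification that the elementary substitutions never cascade.
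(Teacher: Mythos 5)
Your proposal is correct and takes essentially the same route as the paper: both reduce, via Theorem~\ref{min length under auts}, to checking the four Whitehead automorphisms of (\ref{Four Whitehead automorphisms1}), and your length-changes $c\pm(a-b)$ and $d\pm(a-b)$ are exactly the paper's tabulated lengths $3a+b+2c+d$, $a+3b+2c+d$, $3a+b+c+2d$, $a+3b+c+2d$ minus the original length $2a+2b+c+d$, yielding the same inequalities. The only difference is one of detail: you make explicit the cancellation count and the degree-balance symmetry of the Whitehead graph, which the paper compresses into ``it is not hard to show.''
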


\begin{proof}
Theorems~\ref{min length under auts} implies that we need only to see how algebraic length of $\mathcal{P}$ changes under the Whitehead automorphisms.
The algebraic length of $\mathcal{P}$ in Figure~\ref{DPCFig8a-3}a is $2a+2b+c+d$. It is not hard to show that if we apply the Whitehead automorphism $\theta_1: (A,B) \mapsto (AB,B)$ to $\mathcal{P}$, then algebraic length of $\theta_1(\mathcal{P})$ is $3a+b+2c+d$. Similarly, the Whitehead automorphisms $\theta_2: (A,B) \mapsto (AB^{-1},B)$,
$\theta_3: (A,B) \mapsto (A,BA)$, and $\theta_4: (A,B) \mapsto (A,BA^{-1})$ change the algebraic length of $\mathcal{P}$ to $a+3b+2c+d$, $3a+b+c+2d$, and $a+3b+c+2d$ respectively.
Therefore $2a+2b+c+d$ is minimal algebraic length of $\mathcal{P}$ if and only if

\begin{equation}\label{Four inequalities}
\begin{aligned}
2a+2b+c+d \leq 3a+b+2c+d(\Leftrightarrow b-a\leq c),\\
2a+2b+c+d \leq a+3b+2c+d(\Leftrightarrow a-b\leq c),\\
2a+2b+c+d \leq 3a+b+c+2d(\Leftrightarrow b-a\leq d),\\
2a+2b+c+d \leq a+3b+c+2d(\Leftrightarrow a-b\leq d).
\end{aligned}
\end{equation}

\noindent Then the inequalities in (\ref{Four inequalities}) complete the proof.
\end{proof}

Let $H$ be a genus two handlebody with a complete set of cutting disks $\{D_A, D_B\}$ with $\pi_1(H)=F(A,B)$,
$\mathcal{R}=\{R_1, \ldots, R_n\}$ be a set of pairwise disjoint simple closed curves on $\partial H$ which have essential intersections with $D_A\cup D_B$.
Let $\mathbb{D}_\mathcal{R}$ be a Heegaard diagram of $\mathcal{R}$ with respect to $\{D_A, D_B\}$ and $[R_i]$ be a presentation represented by $R_i$ in $\pi_1(H)=F(A,B)$. Then we have a two-generator presentation $\mathcal{P}_\mathcal{R} = \langle \, A,B \,|\, [R_1], \ldots,  [R_n] \,\rangle$. The \emph{geometric length} of the Heegaard diagram $\mathbb{D}_\mathcal{R}$, denoted by $|\mathbb{D}_\mathcal{R}|$ is defined to be the total number of edges in $\mathbb{D}_\mathcal{R}$.

The following theorems are well-known results of Zieschang \cite{Z70}.

\begin{thm}
\label{geo min length under auts}
Let $H, \mathcal{R}$, and $\mathcal{P}_\mathcal{R}$ be defined as above. If there is an automorphism of the free group $F(A,B)$ which reduces the algebraic length of $\mathcal{P}_\mathcal{R}$,
then there exists a change of cutting disks of $H$ inducing one of the Whitehead automorphisms in \emph{(\ref{Four Whitehead automorphisms1})}
which also reduce the geometric length of $\mathbb{D}_\mathcal{R}$.
\end{thm}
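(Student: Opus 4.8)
The plan is to reduce this geometric statement to Whitehead's algebraic theorem (Theorem~\ref{min length under auts}) and then to \emph{geometrically realize} the length-reducing Whitehead automorphism it produces by an explicit change of cutting disks. Throughout I would use the basic dictionary between the two notions of length: if the curves of $\mathcal{R}$ are in minimal position with respect to $D_A \cup D_B$, then reading the successive intersections of each $R_i$ with the oriented disks $D_A, D_B$ spells a cyclically reduced word $[R_i]$, and the number of edges of $\mathbb{D}_\mathcal{R}$ contributed by $R_i$ is exactly the cyclically reduced length of $[R_i]$. Hence $|\mathbb{D}_\mathcal{R}| = |\mathcal{P}_\mathcal{R}|$. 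Pinning this equality down cleanly — in particular that geometric minimal position corresponds exactly to cyclic reduction, with each removable bigon between $\mathcal{R}$ and $D_A\cup D_B$ accounting for one free cancellation — is the first thing I would establish.

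With the dictionary in hand I would argue as follows. By hypothesis some automorphism of $F(A,B)$ strictly reduces $|\mathcal{P}_\mathcal{R}|$; by Theorem~\ref{min length under auts} one of the four Whitehead automorphisms $\theta$ listed in (\ref{Four Whitehead automorphisms1}) already does so, so that after applying $\theta$ and freely and cyclically reducing one has $|\theta(\mathcal{P}_\mathcal{R})| < |\mathcal{P}_\mathcal{R}|$. The next step is to realize $\theta$ geometrically: each T-transformation $A \mapsto AB^{\pm 1}$ (respectively $B \mapsto BA^{\pm1}$) is induced by a change of cutting disks in which $D_A$ is replaced by the disk $D_A'$ obtained by band-summing $D_A$ to a parallel copy of $D_B$ along a suitable band in $\partial H$ (respectively $D_B$ band-summed to $D_A$). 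I would check that $\{D_A', D_B\}$ is again a complete set of cutting disks of $H$ and that reading $\mathcal{R}$ against $\{D_A', D_B\}$ spells precisely the (not-yet-reduced) words $\theta([R_i])$, so the new Heegaard diagram $\mathbb{D}'_\mathcal{R}$ computes the presentation $\theta(\mathcal{P}_\mathcal{R})$.

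It then remains to compare lengths. Immediately after the band sum the diagram $\mathbb{D}'_\mathcal{R}$ need not be in minimal position: the band inserts exactly the arcs corresponding to the new letters $B^{\pm1}$, and wherever the substitution produces an adjacent inverse pair there is a bigon between $R_i$ and $D_A' \cup D_B$ that can be removed by isotopy, each removal decreasing the edge count by two and the word length by two in lockstep. Isotoping $\mathcal{R}$ back into minimal position with respect to $\{D_A', D_B\}$ therefore brings the geometric length down to the cyclically reduced length of $\theta(\mathcal{P}_\mathcal{R})$; combining this with the dictionary gives $|\mathbb{D}'_\mathcal{R}| = |\theta(\mathcal{P}_\mathcal{R})| < |\mathcal{P}_\mathcal{R}| = |\mathbb{D}_\mathcal{R}|$, which is exactly the asserted reduction of geometric length under a change of cutting disks.

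The main obstacle I anticipate is this last comparison: showing that the geometric simplification after the band sum removes \emph{exactly} the bigons matching the algebraic free and cyclic reductions, with no hidden extra cancellations that would break the correspondence between edges and letters, and dually that no bigon survives a cancellation the word has already performed. The disjointness and simplicity of the curves of $\mathcal{R}$ is what I would lean on here: because the $R_i$ are pairwise disjoint simple closed curves, distinct arcs of the diagram cannot interfere, so an innermost/outermost-bigon argument localizes each cancellation to a single inserted band arc and matches the geometric and algebraic reductions step for step. Setting up this bigon bookkeeping carefully, and verifying it uniformly for all four T-transformations in (\ref{Four Whitehead automorphisms1}), is where the real work lies.
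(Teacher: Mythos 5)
Your proof has a genuine gap, and it sits exactly at the step you call the ``dictionary.'' It is not true that disjoint simple closed curves in bigon-free (minimal) position with respect to $D_A\cup D_B$ spell freely and cyclically reduced words, nor that $|\mathbb{D}_\mathcal{R}|=|\mathcal{P}_\mathcal{R}|$ holds in that position. A curve can return to the same side of the same cutting disk along an arc that is \emph{essential} in the four-holed sphere obtained by cutting $\partial H$ along $\partial D_A\cup\partial D_B$ (a wave-type configuration): such an arc contributes a cancelling pair $XX^{-1}$ to the spelled word, yet it bounds no bigon, so no isotopy removes it. Concretely, there is a separating simple closed curve $R$ on $\partial H$ (the boundary of a nonstandard separating disk of $H$) made of one essential arc from $A^{+}$ to $A^{+}$ and one essential arc from $A^{-}$ to $A^{-}$: it meets $\partial D_A$ twice, is disjoint from $\partial D_B$, has no bigons with $\partial D_A\cup\partial D_B$ (all complementary regions are annuli), and spells $AA^{-1}$, whose reduced length is $0\neq 2$. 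The same phenomenon occurs with nontrivial reduced words of the form $AA^{-1}w$. Hence both invocations of the dictionary in your argument fail: the initial equality $|\mathbb{D}_\mathcal{R}|=|\mathcal{P}_\mathcal{R}|$, and, more fatally, the final claim that after the band-sum change of disks an isotopy back to bigon-free position brings the geometric length down to the reduced length of $\theta(\mathcal{P}_\mathcal{R})$. Your proposed innermost-bigon bookkeeping cannot close this, because the failure mode is not a hidden extra cancellation but an algebraic cancellation with \emph{no corresponding bigon at all}; once that is possible, the strict inequality $|\mathbb{D}'_\mathcal{R}|<|\mathbb{D}_\mathcal{R}|$ simply does not follow from $|\theta(\mathcal{P}_\mathcal{R})|<|\mathcal{P}_\mathcal{R}|$.

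This gap is precisely the nontrivial content of the statement, which is why the paper offers no proof and quotes it as a known result of Zieschang \cite{Z70}: Whitehead's Theorem~\ref{min length under auts} is the easy algebraic half, and the hard half is showing that algebraic reducibility forces \emph{geometric} reducibility despite the possible mismatch between geometric and algebraic length. The paper itself signals this distinction: Theorem~\ref{geometric minimal realization} carries $|\mathbb{D}_\mathcal{R}|=|\mathcal{P}_\mathcal{R}|$ as an explicit \emph{hypothesis} rather than deriving it, and the remark following Corollary~\ref{unique under no level trans} notes that $\mathbb{D}_\mathcal{R}$ ``realizes'' $\mathcal{P}_\mathcal{R}$ only when its graph has one of the special forms of Figure~\ref{DPCFig8a-3}. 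A correct argument must either reproduce Zieschang's surface-theoretic analysis of the arcs in the cut-open surface, or first establish a realization statement that rules out wave-type arcs; assuming that bigon removal alone matches geometric length to reduced word length assumes away exactly the difficulty the theorem exists to overcome.
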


Note that a change of cutting disks inducing a Whitehead automorphism can be achieved by replacing
one cutting disk $D$ by a cutting disk $D'$ which is obtained by bandsumming $D$ with another cutting disk along an arc connecting them.
In terms of the terminology in \cite{W36} and \cite{Z70}, such changes of cutting disks of $H$ described in Theorem~\ref{geo min length under auts} is called \textit{geometric-T-transformation} of $H$. If a geometric-T-transformation of $H$ carries a geometric minimal Heegaard diagram $\mathbb{D}$ to another Heegaard digarm $\mathbb{D}'$ with $|\mathbb{D}'|=|\mathbb{D}|$,
then it is called a \textit{level-geometric-T-transformation}, or simply \textit{level-geometric-transformation}.

\begin{thm}
\label{geometric minimal realization}
Let $H, \mathcal{R}$, $\mathbb{D}_\mathcal{R}$, and $\mathcal{P}_\mathcal{R}$ be defined as above. Suppose $\mathcal{P}_\mathcal{R}$ has minimal algebraic length, $|\mathbb{D}_\mathcal{R}|=|\mathcal{P}_\mathcal{R}|$, and $\mathcal{P'}_\mathcal{R}$ is a presentation obtained from $\mathcal{P}_\mathcal{R}$ by a level-transformation.
Then there exists a level-geometric-transformation which carries $\mathbb{D}_\mathcal{R}$ into $\mathbb{D'}_\mathcal{R}$ such that $|\mathbb{D'}_\mathcal{R}|=|\mathcal{P'}_\mathcal{R}|$,
and $\mathbb{D'}_\mathcal{R}$ realizes $\mathcal{P'}_\mathcal{R}$.
\end{thm}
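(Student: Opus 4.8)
The plan is to realize the algebraic level-transformation $\theta$ geometrically by the corresponding change of cutting disks and then to pin down the geometric length by length bookkeeping, using Theorem~\ref{geo min length under auts} to convert the algebraic minimality of $\mathcal{P}_\mathcal{R}$ into geometric minimality of $\mathbb{D}_\mathcal{R}$.

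First I would record the general relationship between $|\mathbb{D}_\mathcal{R}|$ and $|\mathcal{P}_\mathcal{R}|$. For any complete set of cutting disks, freely and cyclically reducing the words read off the diagram cancels exactly those pairs of consecutive crossings with a single $\partial D_A$ or $\partial D_B$ that cut off a bigon on $\partial H$; hence $|\mathbb{D}_\mathcal{R}| \geq |\mathcal{P}_\mathcal{R}|$ always, with equality precisely when $\mathbb{D}_\mathcal{R}$ is in minimal position, i.e.\ when $\mathbb{D}_\mathcal{R}$ realizes $\mathcal{P}_\mathcal{R}$. Next I would combine this with Theorem~\ref{geo min length under auts} and the hypothesis that $\mathcal{P}_\mathcal{R}$ has minimal algebraic length to show that $\mathbb{D}_\mathcal{R}$ is geometrically minimal: a change of cutting disks inducing any automorphism $\phi$ yields a diagram of geometric length at least $|\phi(\mathcal{P}_\mathcal{R})| \geq |\mathcal{P}_\mathcal{R}| = |\mathbb{D}_\mathcal{R}|$, so no change of cutting disks can lower the geometric length.

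I would then carry out the change of cutting disks $\tau$ that induces $\theta$ — concretely, the band sum of one cutting disk with a parallel copy of the other prescribed by $\theta$ — and let $\mathbb{D'}_\mathcal{R}$ be the resulting Heegaard diagram put into minimal position. Since $\tau$ induces $\theta$ on $\pi_1(H)=F(A,B)$, the cyclically reduced words read off $\mathbb{D'}_\mathcal{R}$ are exactly the relators of $\theta(\mathcal{P}_\mathcal{R})$, which is $\mathcal{P'}_\mathcal{R}$ by the definition of a level-transformation; thus $\mathbb{D'}_\mathcal{R}$ realizes $\mathcal{P'}_\mathcal{R}$, and being in minimal position it satisfies $|\mathbb{D'}_\mathcal{R}| = |\mathcal{P'}_\mathcal{R}|$. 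Because $\theta$ is a level-transformation, $|\mathcal{P'}_\mathcal{R}| = |\mathcal{P}_\mathcal{R}| = |\mathbb{D}_\mathcal{R}|$, so $|\mathbb{D'}_\mathcal{R}| = |\mathbb{D}_\mathcal{R}|$: the geometric length is preserved, and $\tau$ is therefore a level-geometric-transformation carrying $\mathbb{D}_\mathcal{R}$ to $\mathbb{D'}_\mathcal{R}$ with $\mathbb{D'}_\mathcal{R}$ realizing $\mathcal{P'}_\mathcal{R}$, as required.

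The hard part will be the step asserting that the minimal-position diagram $\mathbb{D'}_\mathcal{R}$ has geometric length exactly $|\mathcal{P'}_\mathcal{R}|$ rather than merely $\geq |\mathcal{P'}_\mathcal{R}|$: I must verify that every cancellation in the word read off the band-summed diagram is realized by a removable bigon, and that the band across the handle creates no intersections beyond those dictated by $\theta$. Here I expect to analyze the local picture of the band along the lines of the Whitehead-graph length computation in the proof of Proposition~\ref{min length conditions}, and to use the geometric minimality of $\mathbb{D}_\mathcal{R}$ established above to exclude any residual excess via the reverse transformation $\tau^{-1}$ inducing $\theta^{-1}$, which is again a level-transformation. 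This bigon-and-length bookkeeping is the technical heart of the argument; once it is in place, the level condition supplies the preservation of geometric length automatically.
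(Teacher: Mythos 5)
First, a point of comparison: the paper does not prove this theorem at all --- it is quoted, together with Theorem~\ref{geo min length under auts}, as a ``well-known result of Zieschang \cite{Z70}'', so there is no in-paper argument to measure your proposal against; it must be judged as a reconstruction of Zieschang's proof. Judged that way, it has a genuine gap, located exactly where you place the ``hard part'', and the equivalence you use to bridge it is false. You claim that $|\mathbb{D}_\mathcal{R}|=|\mathcal{P}_\mathcal{R}|$ holds ``precisely when'' the diagram is in minimal position, i.e.\ that once all bigons between $\mathcal{R}$ and $\partial D_A\cup\partial D_B$ are removed, the words read off the diagram are freely and cyclically reduced. Only one direction of this is true: equality forces minimal position. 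In the other direction, a cancelling pair $XX^{-1}$ corresponds, in the four-holed sphere obtained by cutting $\partial H$ along $\partial D_A\cup\partial D_B$, to an arc with both endpoints on a single boundary circle; such an arc is not removable by isotopy whenever it separates the remaining three circles nontrivially (an essential return), and embedded, bigon-free configurations of this kind exist --- for instance two disjoint arcs, one encircling the circle $B^+$ with both endpoints on $A^-$ and one encircling $B^-$ with both endpoints on $A^+$, form a bigon-free simple closed curve whose word $AA^{-1}$ reduces to the empty word. This is precisely why the paper carries $|\mathbb{D}_\mathcal{R}|=|\mathcal{P}_\mathcal{R}|$ as a hypothesis over and above the standing assumption that $\mathcal{R}$ has only essential intersections with $D_A\cup D_B$, and why it bothers to remark, after Corollary~\ref{unique under no level trans}, that diagrams with graphs as in Figures~\ref{DPCFig8a-3}a and \ref{DPCFig8a-3}b do realize their presentations. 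So your concluding step, ``being in minimal position it satisfies $|\mathbb{D'}_\mathcal{R}|=|\mathcal{P'}_\mathcal{R}|$'', is not a consequence of bigon removal; it is the entire content of the theorem.

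The repair you sketch does not close this gap. If the band-summed diagram, after removing every removable bigon, still had excess length $|\mathbb{D'}_\mathcal{R}|>|\mathcal{P'}_\mathcal{R}|=|\mathcal{P}_\mathcal{R}|=|\mathbb{D}_\mathcal{R}|$, then applying $\tau^{-1}$ would merely restore $\mathbb{D}_\mathcal{R}$ and strictly decrease geometric length; that contradicts nothing, since $\mathbb{D'}_\mathcal{R}$ was never claimed to be geometrically minimal. Theorem~\ref{geo min length under auts} is likewise of no help: its hypothesis is that some automorphism reduces algebraic length, whereas $\mathcal{P'}_\mathcal{R}$, being a level-transform of a minimal presentation, already has minimal algebraic length. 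What is actually needed is Zieschang's surface-topology argument: one must position the band realizing the level Whitehead move compatibly with the edges of $\mathbb{D}_\mathcal{R}$ and show directly that every cancellation in the transformed words is realized by a removable bigon --- equivalently, that the new diagram has no essential returns. Your second paragraph, deducing geometric minimality of $\mathbb{D}_\mathcal{R}$ from algebraic minimality and the inequality $|\mathbb{D}_\mathcal{R}|\geq|\mathcal{P}_\mathcal{R}|$, is correct, because it uses only the true direction of your equivalence; but the technical heart of the theorem is acknowledged rather than supplied.
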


\begin{cor} \label{unique under no level trans}
Let $H, \mathcal{R}$, $\mathbb{D}_\mathcal{R}$, and $\mathcal{P}_\mathcal{R}$ be defined as above.
Suppose that $\mathcal{P}_\mathcal{R}$ has minimal algebraic length.
If $\mathcal{P}_\mathcal{R}$ has no level-transformations, then the complete set of cutting disks $\{D_A, D_B\}$ is unique such that $\mathcal{P}_\mathcal{R}$ is minimal.
\end{cor}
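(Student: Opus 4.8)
The plan is to argue by a uniqueness/contradiction scheme built on the algebraic and geometric Whitehead--Zieschang machinery already assembled above. Suppose $\{D'_A, D'_B\}$ is a second complete set of cutting disks of $H$ whose Heegaard diagram $\mathbb{D}'_\mathcal{R}$ of $\mathcal{R}$ realizes a presentation $\mathcal{P}'_\mathcal{R}$ of minimal algebraic length, and suppose toward a contradiction that $\{D'_A, D'_B\}$ is not isotopic to $\{D_A, D_B\}$. First I would record that passing from $\{D_A, D_B\}$ to $\{D'_A, D'_B\}$ is a change of cutting disks of $H$, hence it decomposes into geometric-$T$-transformations and induces an automorphism $\pi$ of $\pi_1(H) = F(A,B)$ with $\pi(\mathcal{P}_\mathcal{R}) = \mathcal{P}'_\mathcal{R}$. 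Since $\mathcal{P}_\mathcal{R}$ and $\mathcal{P}'_\mathcal{R}$ are presentations of the same group, both of minimal algebraic length, and related by $\pi$, Theorem~\ref{finite sequence of level-T-transformation} supplies a finite sequence of level-transformations $\theta_1, \dots, \theta_k$ carrying $\mathcal{P}_\mathcal{R}$ to $\mathcal{P}'_\mathcal{R}$.

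Next I would invoke the standing hypothesis that $\mathcal{P}_\mathcal{R}$ admits no level-transformations. The initial automorphism $\theta_1$ of the sequence is by definition a Whitehead automorphism that preserves the (minimal) length of $\mathcal{P}_\mathcal{R}$, i.e.\ a level-transformation of $\mathcal{P}_\mathcal{R}$. As none exists, the sequence must be empty, which forces $k=0$, $\mathcal{P}_\mathcal{R} = \mathcal{P}'_\mathcal{R}$, and $\pi$ to fix the presentation.

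To turn this algebraic coincidence into the geometric conclusion I would use Theorem~\ref{geometric minimal realization}, which matches level-transformations of a minimal presentation with level-geometric-transformations of its realizing Heegaard diagram; in the reverse direction a geometric-$T$-transformation between geometrically minimal diagrams induces a Whitehead automorphism preserving algebraic length, hence a level-transformation. Thus, were $\mathbb{D}'_\mathcal{R}$ a distinct minimal diagram obtained from $\mathbb{D}_\mathcal{R}$, the first nontrivial level-geometric-transformation along the way would produce a level-transformation of $\mathcal{P}_\mathcal{R}$, again contradicting the hypothesis. Since $\mathbb{D}_\mathcal{R}$ therefore admits no level-geometric-transformation to a distinct minimal diagram, the diagram realizing a minimal presentation is unique, and so $\{D'_A, D'_B\}$ must be isotopic to $\{D_A, D_B\}$.

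I expect the last bridge to be the main obstacle: deducing geometric uniqueness of the cutting disks from the equality $\mathcal{P}_\mathcal{R} = \mathcal{P}'_\mathcal{R}$. Knowing that $\pi$ fixes the presentation is weaker than knowing $\pi$ is trivial, so I must rule out that a length-preserving automorphism fixing the relators—for instance one permuting or inverting the generators—could correspond to a genuinely non-isotopic cutting disk system. Pinning this down requires the full realization content of Theorem~\ref{geometric minimal realization}, namely that a minimal presentation is realized by an essentially unique minimal Heegaard diagram, combined with the fact that any change of cutting disks between geometrically minimal diagrams factors through level-geometric-transformations, each of which would be detected algebraically as a level-transformation that the hypothesis forbids.
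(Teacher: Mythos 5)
Your proposal is correct and takes essentially the same route as the paper: the paper's entire proof is the one-line citation ``This follows from Theorem~\ref{geometric minimal realization},'' which implicitly packages exactly the combination you spell out, namely Theorem~\ref{finite sequence of level-T-transformation} to force the sequence of level-transformations to be empty, together with Zieschang's realization theory to transfer that algebraic rigidity to the cutting disks. The ``last bridge'' you flag --- that any change of complete cutting-disk systems between geometrically minimal diagrams factors through level-geometric-transformations, each of which would be detected algebraically as a level-transformation --- is precisely what the paper's citation of Theorem~\ref{geometric minimal realization} is standing in for, so your attempt is no less complete than the paper's own argument.
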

\begin{proof}
This follows from Theorem~\ref{geometric minimal realization}.
\end{proof}

Regarding Proposition~\ref{min length conditions}, we note that if $\mathbb{D}_\mathcal{R}$ has a graph of the form of Figures~\ref{DPCFig8a-3}a or \ref{DPCFig8a-3}b, then $\mathbb{D}_\mathcal{R}$ realizes $\mathcal{P}_\mathcal{R}$.
Therefore the graph $\mathbb{D}_\mathcal{R}$ of the form of Figure~\ref{DPCFig8a-3}a or and \ref{DPCFig8a-3}b is the Whitehead graph of $\mathcal{P}_\mathcal{R}$.

\subsection{Presentations of a torus or a cable knot relator} \hfill

\smallskip

Suppose $R$ is a simple closed curve in the boundary of a genus two handlebody $H$ such
that $H[R]$ embeds in $S^3$. We have shown that if $H[R]$ is the exterior of a torus or a cable knot in $S^3$,
then $R$ has an R-R diagram with the form of Figure~\ref{PSFFig3aa} or \ref{PSFFig3ab}.
The following theorem provides a strategy to
determine whether or not $H[R]$ is the exterior of a torus or a cable knot.

\begin{figure}[t]
\centering
\includegraphics[width = 1\textwidth]{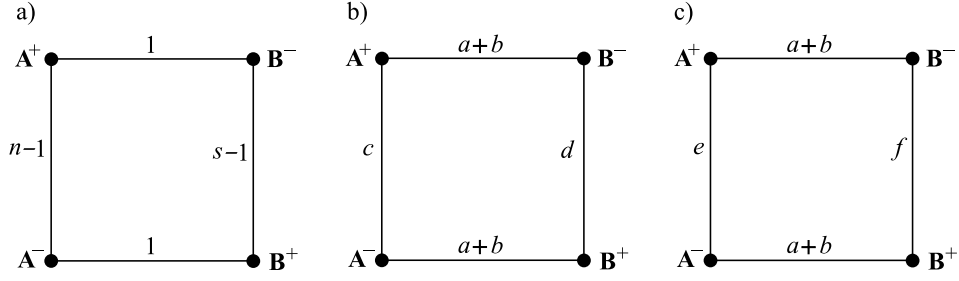}
\caption{The Whitehead graphs of $R$ corresponding to Figures~\ref{PSFFig3aa} and \ref{PSFFig3ab}.}
\label{WHgraph-3}
\end{figure}

\begin{thm}
Suppose $R$ is a simple closed curve in the boundary of a genus two handlebody $H$ such
that $H[R]$ is the exterior of a torus or a cable knot in $S^3$,
and a complete set of cutting disks $\{D_A, D_B\}$ of $H$ intersects $R$ minimally.
Let $\mathcal{P}_R$ be the two-generator presentation $\pi_1(H[R])=\langle \, A,B \,|\, [R]\,\rangle$

If $R$ has no level-transformations in $\mathcal{P}_R$, then the R-R diagram
of $R$ with respect to $\{D_A, D_B\}$ has the form of Figure~\emph{\ref{PSFFig3aa}} or \emph{\ref{PSFFig3ab}}
up to the homeomorphisms of $H$ inducing the automorphisms
exchanging $A$ and $B$, and replacing $A^{-1}$ by $A$ or $B^{-1}$ by $B$.

If $R$ has a level-transformation in $\mathcal{P}_R$, then there is an automorphism on $\pi_1(H)=F(A,B)$
of form $A\mapsto AB^p$ or $B\mapsto BA^q$ which carries $R$ to a curve with an R-R diagram of the form
of Figure~\emph{\ref{PSFFig3aa}} or \emph{\ref{PSFFig3ab}} with $s=2$ up to the homeomorphisms of $H$ inducing the automorphisms
exchanging $A$ and $B$, and replacing $A^{-1}$ by $A$ or $B^{-1}$ by $B$.
\end{thm}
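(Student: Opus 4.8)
The plan is to route the entire argument through the minimal-length theory of Section~\ref{Presentations of a torus or a cable a torus knot relator}, reducing everything to a single Whitehead-graph computation for the standard diagrams of Figures~\ref{PSFFig3aa} and \ref{PSFFig3ab}. First I would record the two facts that drive the proof. Since $\{D_A,D_B\}$ meets $R$ minimally, the associated geometric length is minimal, so by the contrapositive of Zieschang's Theorem~\ref{geo min length under auts} the presentation $\mathcal{P}_R$ has minimal algebraic length: a length-reducing automorphism of $F(A,B)$ would be realized by a geometric-$T$-transformation reducing the number of intersection points of $R$ with $D_A\cup D_B$, contradicting minimality. On the other hand, because $H[R]$ is the exterior of a torus or cable knot, Theorems~\ref{torus knot relator R} and \ref{cable knot relator R} furnish a (possibly different) cutting system $\{D'_A,D'_B\}$ for which $R$ has the standard R-R diagram of Figure~\ref{PSFFig3aa} or \ref{PSFFig3ab}, whose Whitehead graph is the one displayed in Figure~\ref{WHgraph-3} and which has the shape of Figure~\ref{DPCFig8a-3}a, so that Proposition~\ref{min length conditions} applies to it.

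The computational heart of the argument is the next step: read off the four edge multiplicities of Proposition~\ref{min length conditions} (call them $a_0,b_0,c_0,d_0$, to avoid clashing with the R-R parameters $a,b$) for the standard diagram in terms of $n$, $s$, and the diagram's $a,b$, and then invoke Proposition~\ref{min length conditions}. I expect this to show, first, that the standard diagram always satisfies $|a_0-b_0|\le c_0$ and $|a_0-b_0|\le d_0$, so that $\{D'_A,D'_B\}$ is itself a minimal cutting system; and second, that the level-transformation criterion $|a_0-b_0|=c_0$ or $|a_0-b_0|=d_0$ holds \emph{precisely} when $s=2$. This dichotomy — $s>2$ rigid, $s=2$ flexible — is exactly what separates the two conclusions of the theorem, and getting the bookkeeping of the multiplicities right, together with the sharp $s=2$ characterization, is the main obstacle.

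With this in hand, Case~1 is immediate. If $\mathcal{P}_R$ has no level-transformation then, being of minimal algebraic length, Corollary~\ref{unique under no level trans} says the minimal cutting system is unique up to the homeomorphisms exchanging $A,B$ and replacing $A^{-1},B^{-1}$ by $A,B$. Since $\{D'_A,D'_B\}$ is also minimal by the previous step, it must coincide with $\{D_A,D_B\}$ up to these symmetries, so the R-R diagram of $R$ with respect to $\{D_A,D_B\}$ already has the form of Figure~\ref{PSFFig3aa} or \ref{PSFFig3ab}.

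For Case~2, suppose $\mathcal{P}_R$ admits a level-transformation. By the contrapositive of the $s=2$ characterization, the standard diagram supplied by Theorems~\ref{torus knot relator R} and \ref{cable knot relator R} cannot have $s>2$; otherwise $\{D'_A,D'_B\}$ would be the unique minimal system by Corollary~\ref{unique under no level trans} and $\mathcal{P}_R$ would admit no level-transformation. Hence it has $s=2$. It remains to connect $\{D_A,D_B\}$ to this $s=2$ standard system by a single automorphism of the allowed shape. The change of cutting disks carrying $\{D_A,D_B\}$ to $\{D'_A,D'_B\}$ induces an automorphism $\pi$ of $F(A,B)$ with $\pi(\mathcal{P}_R)$ equal to the standard presentation, so by Theorem~\ref{finite sequence of level-T-transformation} it factors as a finite sequence of level-transformations, each of which, by the ``furthermore'' clause of Proposition~\ref{min length conditions}, is of the form $(A,B)\mapsto(A,BA^{\pm1})$ or $(A,B)\mapsto(AB^{\pm1},B)$. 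I would then verify that one can take the sequence to use a single one of these two types, so that the composite collapses to one block automorphism $A\mapsto AB^{p}$ or $B\mapsto BA^{q}$; this is where the precise values of $c_0,d_0$ versus $|a_0-b_0|$ are used, since a strict inequality $|a_0-b_0|<c_0$ (resp. $<d_0$) forbids one of the two types along the chain and pins it to a single generator. Finally, Theorem~\ref{geometric minimal realization} realizes this presentation-level block automorphism as an honest change of cutting disks, carrying $R$ to the $s=2$ standard R-R diagram, again up to the stated homeomorphisms, which completes Case~2.
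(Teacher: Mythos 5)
Your proposal is correct and follows essentially the same route as the paper's proof: deduce algebraic minimality of $\mathcal{P}_R$ from geometric minimality, compute the Whitehead graphs of the standard diagrams (Figure~\ref{WHgraph-3}) and apply Proposition~\ref{min length conditions} to get minimality together with the ``level-transformation iff $s=2$'' dichotomy, settle the rigid case with Corollary~\ref{unique under no level trans}, and in the $s=2$ case factor the connecting automorphism into level-transformations pinned to a single generator. The one step you defer (``verify that the sequence uses a single one of these two types'') is exactly what the paper carries out concretely, by computing the Whitehead graphs of the images of $R$ under $B\mapsto BA^{-p}$ (Figure~\ref{WHgraph2-3}) and applying Proposition~\ref{min length conditions} to each of them to see that every further level-transformation is again of the form $B\mapsto BA^{q}$.
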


\begin{proof}
The Whitehead graphs of $R$ corresponding to Figures~\ref{PSFFig3aa} and \ref{PSFFig3ab} appear as Figure~\ref{WHgraph-3} respectively,
where $c=(n-1)(a+b)+b, d=(s-1)(a+b)$ in Figure~\ref{WHgraph-3}b and $e=(n-1)(a+b)+mb, f=(s-1)(a+b)$ in Figure~\ref{WHgraph-3}c.
The conditions on $n, m, s$ given by Theorems~\ref{torus knot relator R} and \ref{cable knot relator R}, and Proposition~\ref{min length conditions} imply that
$R$ has minimal length under automorphisms and has no level-transformations unless $s=2$. It follows from Corollary~\ref{unique under no level trans} that the set of cutting disks $\{D_A, D_B\}$ of $H$ yielding the R-R diagrams of Figures~\ref{PSFFig3aa} and \ref{PSFFig3ab} is unique such that $R$ has minimal. Therefore $R$ in the hypothesis must have the R-R diagram of Figure~\ref{PSFFig3aa} or \ref{PSFFig3ab}.

Now we assume that $s=2$ in each R-R diagram of Figures~\ref{PSFFig3aa} and \ref{PSFFig3ab}. Then $R$ has minimal length under automorphisms and
has level-transformations.

First consider Figure~\ref{WHgraph-3}a with $s=2$. Then $R=A^nB^2$. It is easy to see by applying the Whitehead automorphisms that the Whitehead automorphism $\theta: B\mapsto BA^{-1}$ is only level-transformation. Therefore the automorphisms keeping the algebraic length of $R$ unchanged are of the form $\theta^p=B\mapsto BA^{-p}$ with $1\leq p \leq n$. If $p=n$, then the automorphism $B\mapsto BA^{-p}$ carries $R=A^nB^2$ to $A^{-n}B^2$, which is the same form. Therefore we may assume $1\leq p < n$.
Now we need to show that the only automorphisms of the image of $R$ under $B\mapsto BA^{-p}$ which leaves the algebraic length of $R$ unchanged are of the form $B\mapsto BA^{q}$.
However this is true by Proposition~\ref{min length conditions} because the image of $R$ under $B\mapsto BA^{-p}$ is $A^{n-p}BA^{-p}B$, whose Whitehead graph has the form shown in Figure~\ref{WHgraph2-3}a.

\begin{figure}[t]
\centering
\includegraphics[width = 1\textwidth]{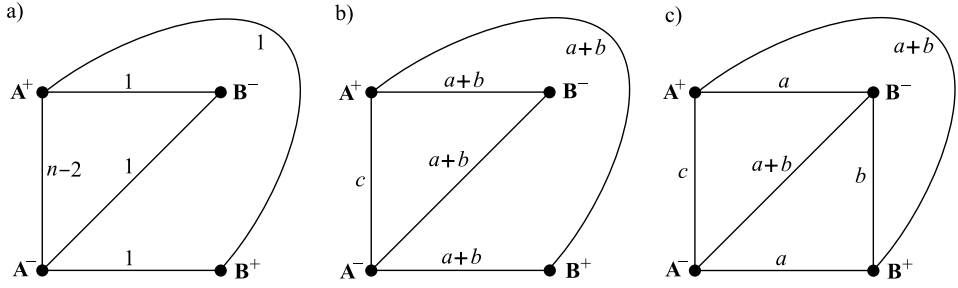}
\caption{The Whitehead graphs of of the image of $R$ under the automorphism $B\mapsto BA^{-p}$.}
\label{WHgraph2-3}
\end{figure}

Second consider Figure~\ref{WHgraph-3}b with $s=2$, where $c=(n-1)(a+b)+b, d=(s-1)(a+b)$. Then $|B^2A^n|=a$ and $|B^2A^{n+1}|=b$ and
it follows by applying the Whitehead automorphisms that the only level-transformations fixing the algebraic length of $R$
are of the form $B\mapsto BA^{-p}$ with $1\leq p \leq n$.
On the other hand, $BA^{-p}BA^{n-p}$ with $|BA^{-p}BA^{n-p}|=a$ and $BA^{-p}BA^{n+1-p}$ with $|BA^{-p}BA^{n+1-p}|=b$ appear in the image of $R$ under the automorphism $B\mapsto BA^{-p}$. Its Whitehead graph appears as in Figure~\ref{WHgraph2-3}b with $c=(n-2)(a+b)+b$ or Figure~\ref{WHgraph2-3}c with $c=(n-1)(a+b)$ depending on the value of $p$, i.e., $p<n$ or $p=n$. In either of the Whitehead graphs, Proposition~\ref{min length conditions}
implies that the only automorphisms of the image of $R$ under $B\mapsto BA^{-p}$ which leaves the algebraic length of $R$ unchanged are of the form $B\mapsto BA^{q}$.

Lastly, we consider Figure~\ref{WHgraph-3}c with $s=2$, where $e=(n-1)(a+b)+mb, f=(s-1)(a+b)$. However if $m=1$, then this becomes the second case. Therefore
we can apply the similar argument to show that the only level-transformations fixing the algebraic length of $R$
are of the form $B\mapsto BA^{-p}$ with $1\leq p \leq n$, and the only automorphisms of the image of $R$ under $B\mapsto BA^{-p}$ which leaves the algebraic length of $R$ unchanged are of the form $B\mapsto BA^{q}$.
\end{proof}

\section{Proper power curves and Nonhyperbolicity}\label{Proper power curves and Nonhyperbolicity}

A simple closed curve $\beta$ in the boundary of a genus two handlebody $H$ is said to be a \textit{proper power curve}
if $\beta$ is disjoint from a separating disk in $H$, does not bound a disk in $H$, and is not
primitive in $H$. Equivalently, $\beta$ represents a proper power of a primitive element in $\pi_1(H)$.
Proper power curves play a very essential role in the classification of hyperbolic P/P and P/SF knots.
In particular, the following theorem says that nonhyperbolicity of P/P and P/SF knots is equivalent to the existence of proper
power curves.

\begin{thm}\label{nonhyperbolicity from a proper power}
Suppose $R$ is a simple closed curve in the boundary of a genus two handlebody $H$ such that $H[R]$ embeds in $S^3$.
Then $H[R]$ is the exterior of the unknot, a torus knot, or a cable knot if and only if there exists a proper power curve $\beta$ in $H$ disjoint from $R$.
\end{thm}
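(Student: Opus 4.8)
The plan is to prove the two implications separately, relating the combinatorial notion of a proper power curve to the standard R-R diagrams of torus and cable knot relators supplied by Theorems~\ref{torus knot relator R} and \ref{cable knot relator R}.

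For the forward implication, suppose $H[R]$ is the exterior of the unknot, a torus knot, or a cable knot. If it is the unknot exterior, then $H[R]$ is a solid torus, so $R$ is primitive; after a change of cutting disks I may assume $R$ meets $D_A$ once and is disjoint from $D_B$, and then the curve carried by $B^2$ is a proper power curve disjoint from $R$. If $H[R]$ is a torus or cable knot exterior, then Theorems~\ref{torus knot relator R} and \ref{cable knot relator R} put the R-R diagram of $R$ into the standard form of Figure~\ref{PSFFig3aa} or \ref{PSFFig3ab}, in which one handle carries $s\ge 2$ strands. I would read off from that diagram the proper power curve $\beta$ given by the corresponding power of a handle generator (a power of a handle core, hence a proper power of a primitive element), which lies in a single handle and is therefore disjoint from a separating disk of $H$; one then checks directly from the figure that $\beta$ can be isotoped off $R$. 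Verifying the three defining properties of a proper power curve is routine once $\beta$ has been located in the standard diagram.

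The substantive direction is the converse. Suppose $\beta$ is a proper power curve disjoint from $R$, and let $D$ be a separating disk of $H$ disjoint from $\beta$, cutting $H$ into solid tori $V_1$, containing $\beta$, and $V_2$. First I would normalize the cutting disks: take $D_B$ to be a meridian disk of $V_1$ and $D_A$ a meridian disk of $V_2$, so that the core $B$ of $V_1$ is primitive and $\beta=B^k$ with $k\ge 2$; geometrically $\beta$ meets $D_B$ in $k$ parallel strands and is disjoint from $D_A$. The key step is to exploit the disjointness of $R$ and $\beta$: on the annulus along which the $B$-handle meets $\partial H$, the arcs of $\beta$ form $k\ge 2$ disjoint spanning arcs, and, after isotoping $R$ to remove trivial intersections with $D_B$, any spanning arcs of $R$ disjoint from them must be parallel to them. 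Hence $R$ has a single band of connections in the $B$-handle, so its R-R diagram has the form of Figure~\ref{PSFFig3ae}. I would then invoke Theorem~\ref{at most two bands and one band} to conclude that $R$ is primitive or a torus or cable knot relator, whence $H[R]$ is the exterior of the unknot, a torus knot, or a cable knot.

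The main obstacle I expect lies in controlling the second handle once the $B$-handle has been reduced to a single band, since reaching the hypothesis of Theorem~\ref{at most two bands and one band} requires that the $A$-handle carry at most two bands. I would argue this by the analysis already used inside the proof of that theorem, namely that three bands in a handle would force, via Theorem 5.1 of \cite{K20d}, that $H[R]$ cannot embed in $S^3$, contradicting the hypothesis. A secondary subtlety is to carry out the normalization $\beta=B^k$ together with the trivial-arc removal without disturbing the parallel-arc count, so that the annulus argument genuinely produces a single band; this is precisely where the disjointness of $\beta$ from both a separating disk and from $R$ is used in an essential way.
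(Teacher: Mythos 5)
The paper does not prove Theorem~\ref{nonhyperbolicity from a proper power} internally: its proof is the single line ``This is Theorem 5.26 in \cite{K20d}.'' So your attempt is necessarily a different route, and it must stand on its own. Your forward direction is essentially workable: in each of Figures~\ref{PSFFig3aa} and \ref{PSFFig3ab} some handle carries a single band of connections with label at least $2$, and the simple closed curve of the same slope lying inside that handle (a once-punctured torus cut off by a separating curve) is a proper power curve missing $R$; in the unknot case your normalization is not quite enough as stated (disjointness of $R$ from $D_B$ does not confine $R$ to one side of a separating disk --- you should instead take the separating disk bounding a neighborhood of $D_A\cup R$ and choose the new $D_B$ as a meridian disk of the complementary solid torus), but this is routine.

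The converse contains two genuine gaps. First, the annulus-parallelism step is not a valid argument: in an annulus \emph{any} two disjoint spanning arcs are parallel, so the assertion that arcs of $R$ disjoint from those of $\beta$ are ``parallel to them'' is vacuous --- it never uses $\beta$, and it would equally ``prove'' that every curve has one band of connections in every handle, which is false. Bands of connections are distinguished by how arcs wind across the handle (how often they cross the cutting disk), which is not visible as an isotopy class of spanning arcs in the annulus. The claim you need is true, but the correct argument runs through the once-punctured torus $T_1$ on the $\beta$-side of the separating disk: cutting $T_1$ along $\beta$ yields a pair of pants in which each essential arc of $R\cap T_1$ has both endpoints on $\partial T_1$ and therefore separates the two copies of $\beta$; all such arcs are parallel, giving the single band. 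Second, and more seriously, nothing you say rules out \emph{three} bands of connections in the $A$-handle: disjointness from $\beta$ imposes no constraint on the other handle, where three parallelism classes of disjoint essential arcs can occur, while Theorem~\ref{at most two bands and one band} requires at most two. Your proposed fix --- quoting Theorem 5.1 of \cite{K20d} --- mimics the paper's use of that result inside the proof of Theorem~\ref{at most two bands and one band}, but there it is invoked only after first establishing that $R$ is \emph{positive} on the relevant handle; you have not established positivity in your setting, and the hypotheses of that external theorem are not available in this paper, so the application is unjustified. Note also that this step re-imports \cite{K20d} in any case, so the argument would not actually make the theorem independent of that preprint, which is exactly the dependence the paper acknowledges by citing its Theorem 5.26.
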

\begin{proof}
This is Theorem 5.26 in \cite{K20d}.
\end{proof}

\begin{prop}\label{nonhyperbolicity of P/P and P/SF from a proper power}
Let $\alpha$ be a $\emph{P/P}$ or $\emph{P/SF}$ simple closed curve in a genus two Heegaard splitting $(\Sigma; H, H')$ of $S^3$ assuming that $\alpha$ is primitive in $H'$ in both cases, and $k$ be a knot in $S^3$ represented by $\alpha$.
Let $R$ be a unique simple closed curve in $\partial H$ such that $R$ is disjoint from $\alpha$ and bounds a cutting disk of $H'$.

Then $k$ is not hyperbolic if and only if there exists a proper power curve $\beta$ in $H$ disjoint from $R$.
\end{prop}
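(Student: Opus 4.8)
The plan is to reduce the statement to the intrinsic characterization of $H[R]$ already established in Theorem~\ref{nonhyperbolicity from a proper power}, using the classification of nonhyperbolic P/P and P/SF knots recorded in Theorem~\ref{nonhyperbolic knots}. The crucial bridge is the homeomorphism between $H[R]$ and the exterior $E(k)$ of $k$: since $\alpha$ is primitive in $H'$, the disk bounded by $R$ is the unique cutting disk $D_R$ of $H'$ whose boundary misses $\alpha$, and as recorded at the start of Section~\ref{Nonhyperbolic of P/P and P/SF knots} this forces $H[R]$ to be homeomorphic to $E(k)$. With this identification in hand, both implications follow by translating ``not hyperbolic'' into the geometric type of $E(k)$ and invoking Theorem~\ref{nonhyperbolicity from a proper power}.

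First I would dispatch the backward implication. Suppose there is a proper power curve $\beta$ in $H$ disjoint from $R$. Theorem~\ref{nonhyperbolicity from a proper power} then forces $H[R]$, and hence $E(k)$, to be the exterior of the unknot, a torus knot, or a cable knot. In each of these cases $E(k)$ fails to be hyperbolic: the unknot exterior is a solid torus, a torus knot exterior is Seifert fibered, and a cable knot exterior contains the essential cabling torus. Thus $k$ is not hyperbolic.

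For the forward implication, suppose $k$ is not hyperbolic. Because $\alpha$ is primitive in $H'$, the knot $k$ is a genuine tunnel-number-one P/P or P/SF knot, so Theorem~\ref{nonhyperbolic knots} applies and $k$ is a torus knot or a $(sp_0q_0\pm1,s)$-cable of a $(p_0,q_0)$ torus knot. In either case $E(k)$ is the exterior of a torus knot or of a cable knot, and therefore so is $H[R]$. Theorem~\ref{nonhyperbolicity from a proper power} then yields a proper power curve $\beta$ in $H$ disjoint from $R$, which completes the proof.

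The argument is short precisely because the heavy lifting is carried by the two cited theorems; the only points needing care are the verification that $H[R]\cong E(k)$ under the present hypotheses (exactly the reduction at the beginning of Section~\ref{Nonhyperbolic of P/P and P/SF knots}) and the observation that a cable of a torus knot is in particular a cable knot, so that the conclusion of Theorem~\ref{nonhyperbolic knots} lands inside the list appearing in Theorem~\ref{nonhyperbolicity from a proper power}. I expect no genuine obstacle beyond this matching of terminology and the bookkeeping of the unknot case, which is harmless since the unknot is nonhyperbolic and is already covered by Theorem~\ref{nonhyperbolicity from a proper power}.
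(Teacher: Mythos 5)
Your proposal is correct and follows essentially the same route as the paper: identify $H[R]$ with the exterior of $k$, use Theorem~\ref{nonhyperbolic knots} for the forward direction, and invoke Theorem~\ref{nonhyperbolicity from a proper power} as the bridge to the proper power curve. Your treatment is in fact slightly more careful than the paper's, since you spell out why an exterior of the unknot, a torus knot, or a cable knot cannot be hyperbolic, a point the paper leaves implicit.
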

\begin{proof}
Assume that $k$ is not hyperbolic. By Theorem~\ref{nonhyperbolic knots}, $k$ is a torus or cable of a torus knot. Note that in Theorem~\ref{nonhyperbolic knots}, the unknot is considered as a torus knot. Since $H[R]$ is the exterior of $k$ in $S^3$, $H[R]$ is the unknot, a torus knot, or a cable of a torus knot. Therefore $k$ is not hyperbolic if and only if $H[R]$ is the unknot, a torus knot, or a cable of a torus knot. Then Theorem~\ref{nonhyperbolicity from a proper power} completes the proof.
\end{proof}

Since the existence of a proper power curve indicates the nonhyperbolicity of P/P and P/SF knots, for a given R-R diagram of a curve $R$ in the boundary
of a genus two handlebody, it is useful to determine whether or not there is a proper power curve $\beta$ disjoint from $R$.

Based on the following minor generalization of a result of Cohen, Metzler, and Zimmerman, we are able to classify all possible R-R diagrams of a proper power curve $\beta$ for any complete set of cutting disks of a genus two handlebody $H$.

\begin{thm}\cite{CMZ81}
\label{recognizing primitives and proper powers}
Suppose a cyclic conjugate of
\[W = A^{n_1}B^{m_1} \dots A^{n_l}B^{m_l}\]
 is a member of a basis of $F(A,B)$ or a proper power of a member of a basis of $F(A,B)$, where $l \geq 1$ and each indicated exponent is nonzero. Then, after perhaps replacing $A$ by $A^{-1}$ or $B$ by $B^{-1}$, there exists $e > 0$ such that:
\[
n_1 = \dots = n_l = 1,
\quad
\text{and}
\quad
\{m_1, \dots ,m_l\} \subseteq \{e, e+1\},
\]
or
\[
\{n_1, \dots ,n_l\} \subseteq \{e, e+1\},
\quad
\text{and}
\quad
m_1 = \dots = m_l = 1.
\]
\end{thm}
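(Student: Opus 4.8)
The plan is to derive the statement from the original classification of bases of $F(A,B)$ due to Cohen, Metzler, and Zimmermann in \cite{CMZ81}, which is precisely the case in which a cyclic conjugate of $W$ is itself a member of a basis. Thus the only genuinely new content is the proper-power case, and my first move is to reduce it to the primitive case. Since $W$ involves both generators ($l\geq 1$ and all exponents are nonzero), I may assume $W$ is cyclically reduced; passing to the cyclically reduced cyclic conjugate changes neither the multiset $\{n_1,\dots,n_l\}$ of $A$-exponents nor the multiset $\{m_1,\dots,m_l\}$ of $B$-exponents, since these are determined by the cyclic word. Moreover, being a member of a basis and being a proper power of a member of a basis are both preserved under passing to any conjugate, because primitivity is preserved by every automorphism of $F(A,B)$, in particular by inner ones. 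Hence I may assume that $W$ itself is either primitive or equal to $u^{d}$ for some primitive $u$ and some $d\geq 2$.

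In the primitive case there is nothing to prove beyond citing \cite{CMZ81}. In the proper-power case, write $W=u^{d}$ with $u$ primitive and $d\geq 2$. Because $W$ is cyclically reduced, it is the literal $d$-fold concatenation of the cyclically reduced form of $u$, so the cyclic sequence of syllable exponents of $W$ is periodic of period $2k$, where $2k$ is the syllable length of $u$ and $l=dk$. Consequently the exponent multisets of $W$ are exactly $d$ copies of those of $u$, so $\{n_1,\dots,n_l\}=\{p_1,\dots,p_k\}$ and $\{m_1,\dots,m_l\}=\{q_1,\dots,q_k\}$ as sets, where $u=A^{p_1}B^{q_1}\cdots A^{p_k}B^{q_k}$ after the harmless cyclic conjugation putting $u$ in alternating syllable form. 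Applying \cite{CMZ81} to the primitive word $u$ produces a choice of replacements $A\mapsto A^{\pm1}$, $B\mapsto B^{\pm1}$ and an integer $e>0$ for which either all $p_i=1$ and $\{q_i\}\subseteq\{e,e+1\}$, or all $q_i=1$ and $\{p_i\}\subseteq\{e,e+1\}$. Since the $A$- and $B$-exponent sets of $W$ coincide with those of $u$, the same replacement and the same $e$ yield the desired conclusion for $W$.

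The main thing to get right is the bookkeeping that translates between the fixed syllable presentation $W=A^{n_1}B^{m_1}\cdots A^{n_l}B^{m_l}$ in the hypothesis and the root $u$: I must check that the cyclic conjugation used to realize the alternating-syllable normal form of $u$, together with the global sign normalization $A\mapsto A^{\pm1}$, $B\mapsto B^{\pm1}$ supplied by \cite{CMZ81} for $u$, are simultaneously compatible with the exponents actually appearing in $W$. The only way this could fail superficially is if a cyclic conjugate split or merged syllables, but since $W$ is cyclically reduced and alternates between nonzero $A$- and $B$-powers, every cyclic conjugate that begins with an $A$-syllable and ends with a $B$-syllable carries the same exponent multisets, and no merging occurs at the seams $B^{q_k}A^{p_1}$ between successive copies of $u$. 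This is where I would spend the most care.

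Alternatively, one could give a self-contained proof by induction on syllable length using Whitehead's peak-reduction, Theorem~\ref{min length under auts}: any such $W$ is not of minimal length because it involves both generators, so some length-lowering Whitehead automorphism $A\mapsto AB^{\pm1}$ or $B\mapsto BA^{\pm1}$ applies, with the minimal form being $A^{e}$ or $B^{e}$, and one tracks how the syllable exponents transform under a reducing $T$-transformation. I expect this route to be more delicate, since controlling the exponents through a reducing transformation requires a careful sign analysis of adjacent syllables; the reduction to \cite{CMZ81} above avoids this entirely and is the approach I would carry out.
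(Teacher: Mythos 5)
Your proposal is correct, but there is no in-paper argument to compare it against: the paper states this theorem with a bare citation to \cite{CMZ81}, describing it as a ``minor generalization'' of that paper's classification of primitive elements, and gives no proof. What you have written is precisely the reduction that the phrase ``minor generalization'' leaves to the reader, and it goes through. The one place it could fail is the one you flag: in the proper-power case $W=u^d$, cyclic reducedness of $W$ forces the reduced word of $u$ to be cyclically reduced (otherwise $u^d$ reduces to $cv^dc^{-1}$ with $c$ nontrivial, which is not cyclically reduced), so $W$ is the literal concatenation of $d$ copies of $u$; replacing $u$ by its cyclic conjugate $u'$ in alternating syllable form $A^{p_1}B^{q_1}\cdots A^{p_k}B^{q_k}$ removes any syllable merging at the seams, and $(u')^d$ represents the same cyclic word as $W$. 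Since the multisets of $A$- and $B$-exponents are invariants of the cyclic word (each syllable is a maximal run of one generator), they are $d$ copies of those of $u'$, so the primitive case of \cite{CMZ81} applied to $u'$ hands you the sign normalization and the integer $e$ for $W$ itself. A sanity check confirming your seam analysis: $u=ABA$ is primitive and $u^2=ABA^2BA$, whose cyclic word is $(A^2B)^2$, exactly two copies of the alternating conjugate $A^2B$; the merged seam changes the linear word but not the cyclic exponent data. Your alternative sketch via peak reduction and Theorem~\ref{min length under auts} would require tracking exponents through reducing Whitehead moves and is, as you say, needlessly delicate; the reduction to the primitive case is the right proof, and it is the argument the paper implicitly relies on.
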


\begin{thm}\label{main theorem4}
Suppose $H$ is a genus two handlebody with a complete set of cutting disks $\{D_A, D_B\}$ with $\pi_1(H)=F(A,B)$, where
the generators $A$ and $B$ are dual to $D_A$ and $D_B$ respectively.
If $\beta$ is a proper power curve in $H$ which has only essential intersections with $D_A$ and $D_B$,
then $\beta$ has one of the following R-R diagrams with respect to the complete set of cutting disks $\{D_A, D_B\}$
up to the homeomorphisms of $H$ inducing the automorphisms
exchanging $A$ and $B$, and replacing $A^{-1}$ by $A$ or $B^{-1}$ by $B$:
\begin{enumerate}
\item \emph{Type I:} $\beta$ has an R-R diagram with a $0$-connection in at least one of the handles.
\item \emph{Type II:} $\beta$ has an R-R diagram of the form shown in Figure~\emph{\ref{PPower8}} with $s>1$.
\item \emph{Type III:} $\beta$ has an R-R diagram of the form shown in Figure~\emph{\ref{PPower10}} with $a,b>0$ and $s>0$.
\item \emph{Type IV:} $\beta$ has an R-R diagram of the form shown in Figure~\emph{\ref{PPower10-1}} with $a,b,c>0$.
\item \emph{Type V:} $\beta$ has an R-R diagram of the form shown in Figure~\emph{\ref{PPower11-1}} with $a,b,c,d>0$.
\end{enumerate}
\end{thm}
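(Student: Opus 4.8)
The plan is to reduce the geometric statement to a combinatorial one about words, using Theorem~\ref{recognizing primitives and proper powers} as the engine, and then to translate the resulting word forms into R-R diagrams. First I would record the word. Since $\beta$ meets $D_A\cup D_B$ only in essential points, it represents a cyclically reduced word $W\in\pi_1(H)=F(A,B)$, and since $\beta$ is a proper power curve, $W$ is a proper power of a primitive element, i.e.\ of a member of a basis of $F(A,B)$. If $\beta$ is disjoint from one of the two cutting disks, then $W=A^{\pm n}$ or $B^{\pm n}$ with $n\ge 2$; such a curve is carried by a single handle and its R-R diagram has a $0$-connection in the other handle, so it already lies in Type I. Thus I may assume $\beta$ meets both $D_A$ and $D_B$ essentially and write $W=A^{n_1}B^{m_1}\cdots A^{n_l}B^{m_l}$ with every exponent nonzero.

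Next I would normalize $W$ using Theorem~\ref{recognizing primitives and proper powers}. A cyclic conjugate of $W$ is a proper power of a basis member, so after perhaps replacing $A$ by $A^{-1}$ or $B$ by $B^{-1}$, and after exchanging $A$ and $B$ if necessary, there is an integer $e_0>0$ with $n_1=\cdots=n_l=1$ and each $m_i\in\{e_0,e_0+1\}$. The point to stress is that these operations are precisely the homeomorphisms of $H$ permitted in the statement, so no change of cutting disks is involved and the conclusion is genuinely about the R-R diagram with respect to the given $\{D_A,D_B\}$. Consequently, up to the permitted symmetries, the $A$-handle of the diagram carries only unit exponents while the $B$-handle carries exponents taking at most the two consecutive values $e_0$ and $e_0+1$; in particular the $B$-handle has at most two bands of connections, labeled $e_0$ and $e_0+1$.

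The heart of the argument is then to constrain the \emph{arrangement} of these two exponent values and to convert it into the band structure of the R-R diagram. Here I would use the full hypothesis $W=x^{e}$ with $x$ primitive and $e\ge 2$: the exponent sequence $(m_1,\dots,m_l)$ is the $e$-fold repetition of the exponent sequence of the primitive period $x$, and applying Theorem~\ref{recognizing primitives and proper powers} to $x$ itself shows that the period's exponents again occupy only two consecutive values, forcing a highly regular (balanced) distribution of the $e_0$'s and $(e_0+1)$'s. Geometrically this matches the picture that $\beta$, being disjoint from a separating disk, runs as $e$ nearly parallel strands following the primitive $x$, so that the parameters $a,b,c,d$ appearing in the figures count numbers of parallel strands in the various bands and are therefore strictly positive. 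Reading the band structure off each admissible word form, I expect the single-value case $x=AB^{s}$ to give the single-band diagram of Figure~\ref{PPower8} (Type II) when $s>1$, with the degenerate low-exponent situations collapsing to a $0$-connection diagram (Type I), while the cases in which both values $e_0,e_0+1$ genuinely occur produce the two- and three-band diagrams of Figures~\ref{PPower10}, \ref{PPower10-1}, and \ref{PPower11-1} (Types III--V). Together with the Type I configurations isolated above, these exhaust the five types.

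The main obstacle I anticipate is this last translation step. Theorem~\ref{recognizing primitives and proper powers} cleanly yields the two-consecutive-value reduction in one handle, but deriving the precise balanced arrangement of the exponents from primitivity of the period, and then converting that arrangement --- together with its $e$-th power --- into the exact band-and-strand picture of an R-R diagram, requires careful bookkeeping of how consecutive syllables group into bands in each handle. The delicate parts are verifying that the enumeration is complete (that no further band configuration can arise), correctly locating the boundary between the nondegenerate band diagrams of Types II--V and the $0$-connection diagrams of Type I, and confirming the stated ranges $s>1$ and $a,b,c,d>0$. I would organize this final case analysis by the number of distinct $B$-exponent runs and by the value of $e_0$, handling the separating-disk/geometric picture in parallel as a check on the algebra.
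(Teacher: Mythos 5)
Your proposal has a genuine gap, and it sits exactly where you yourself flag ``the main obstacle'': the passage from word forms to R-R diagrams. The theorem is not a classification of the cyclically reduced words that proper power curves can represent --- that part is indeed handled by Theorem~\ref{recognizing primitives and proper powers}, and your normalization via the permitted symmetries is fine. It is a classification of the \emph{diagrams}, and the diagram contains strictly more information than the word: the band decomposition and the strand counts $a,b,c,d$ are data of the embedding, not of the conjugacy class in $F(A,B)$. A concrete illustration from the paper's own figures: the word $(AB)^n$, $n\geq 2$, is realized both by Type III diagrams (Figure~\ref{PPower10} with $s=1$, $a+b=n$) and by Type IV diagrams (Figure~\ref{PPower10-1} with $a+b+c=n$), which are different configurations with different numbers of bands. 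So ``reading the band structure off each admissible word form'' is not a well-defined operation, and no amount of bookkeeping on exponent sequences can decide which embedded configurations occur; one must argue geometrically about simple closed curves disjoint from a separating disk --- e.g.\ cut $H$ along that disk, view $\beta$ as a curve on the boundary of the resulting solid torus, and track how the fixed cutting disks $D_A, D_B$ meet everything. That argument, which is the substance of the theorem, is absent from your plan. A symptom of the problem is your assignment of the case $x=AB^{s}$ to Type II: by the caption of Figure~\ref{PPower10}, curves with word $(AB^s)^{a+b}$ are Type III, and Type II (Figure~\ref{PPower8}) is a different configuration altogether, which you would have no way to discover from the word alone.

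For comparison, note that the paper does not prove this statement internally at all: its proof is a citation of Theorem 1.3 of \cite{K20c}, where the geometric classification (including the enumeration of admissible band patterns and the positivity constraints $s>1$, $a,b,c,d>0$) is carried out. So your algebraic skeleton is a plausible first step of that argument, but as written the proposal proves a weaker statement --- a constraint on the words of proper power curves --- and then asserts, rather than establishes, the correspondence with the five diagram types.
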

\begin{proof}
This is Theorem 1.3 of \cite{K20c}.
\end{proof}

\begin{figure}[t]
\centering
\includegraphics[width = 0.7\textwidth]{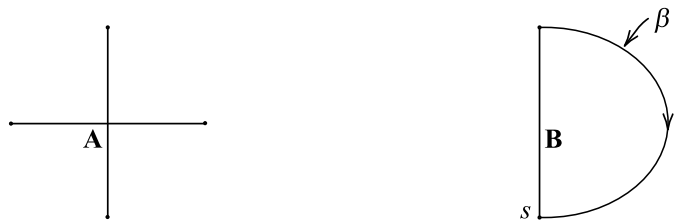}
\caption{Type II of proper power curves $\beta$, where $s>1$.}
\label{PPower8}
\end{figure}

\begin{figure}[t]
\centering
\includegraphics[width = 0.6\textwidth]{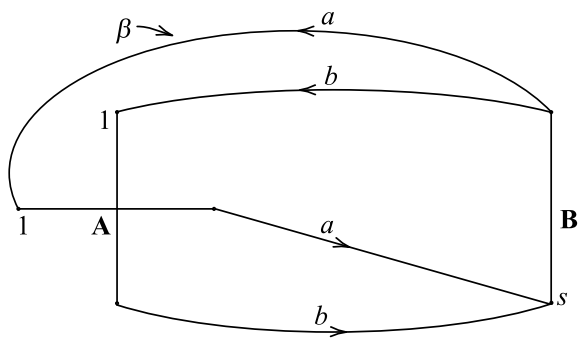}
\caption{Type III of proper power curves $\beta$: $[\beta]=(AB^s)^{a+b}$, where $s>0$ and $a,b>0$.}
\label{PPower10}
\end{figure}

\begin{figure}[t]
\centering
\includegraphics[width = 0.7\textwidth]{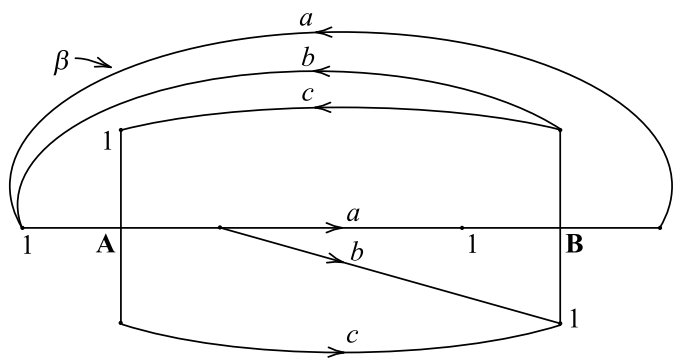}
\caption{Type IV of proper power curves $\beta$: $[\beta]=(AB)^{a+b+c}$, where $a,b,c>0$.}
\label{PPower10-1}
\end{figure}

\begin{figure}[t]
\centering
\includegraphics[width = 0.7\textwidth]{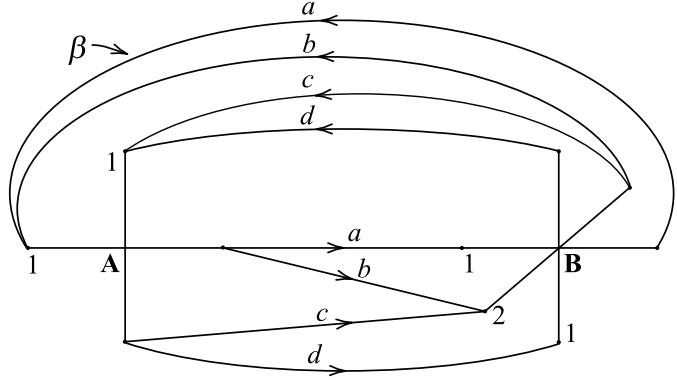}
\caption{Type V of proper power curves $\beta$, where $a,b,c,d>0$.}
\label{PPower11-1}
\end{figure}

Theorem~\ref{main theorem4} yields some properties of a proper power curve and
together with Theorem~\ref{nonhyperbolicity from a proper power} provides some necessary condition for $H[R]$ being nonhyperbolic.

\begin{prop}\label{a proper power is disconnected and has a cut-vertex}
Suppose $\beta$ is a proper power curve in the boundary of a genus two handlebody $H$. Then the Heegaard diagram of $\beta$
for any complete set of cutting disks of $H$ either is not connected or has a cut-vertex.
\end{prop}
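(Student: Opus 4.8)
The plan is to reduce the statement to a computation of the Whitehead graph of a single normalized word. Fix the complete set of cutting disks $\{D_A,D_B\}$ of $H$, so that $\pi_1(H)=F(A,B)$, and isotope $\beta$ so that it meets $D_A\cup D_B$ essentially; then the graph of the resulting Heegaard diagram is exactly the Whitehead graph $G_W$ of the cyclically reduced word $W=[\beta]\in F(A,B)$, whose four vertices are $A^\pm,B^\pm$ and where each occurrence of a subword $xy$ contributes one edge joining $x$ to $y^{-1}$. Since $\beta$ is a proper power curve, $W$ is a proper power of a primitive element, so Theorem~\ref{recognizing primitives and proper powers} applies to $W$. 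The key observation is that both operations permitted there, namely replacing $A$ by $A^{-1}$ or $B$ by $B^{-1}$, and (in order to reach the stated conclusion rather than its $A\leftrightarrow B$ mirror) exchanging the roles of $A$ and $B$, are realized by homeomorphisms of $H$ that merely relabel the four vertices of $G_W$; hence they preserve both connectedness and the existence of a cut-vertex. So it suffices to analyze the normalized word.

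Before invoking Theorem~\ref{recognizing primitives and proper powers}, I would dispose of the degenerate case in which $W$ involves only one generator, say $W=A^{e}$ with $|e|\geq 2$. Here every adjacent pair is $A^{\pm1}A^{\pm1}$, so every edge of $G_W$ joins $A^+$ to $A^-$, while $B^+$ and $B^-$ are isolated; thus $G_W$ is not connected and we are done.

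In the remaining case $W$ involves both generators, and after the normalization above Theorem~\ref{recognizing primitives and proper powers} lets me assume
\[
W=A\,B^{m_1}\,A\,B^{m_2}\cdots A\,B^{m_l},\qquad m_i\in\{e,e+1\},\ e\geq 1,\ l\geq 1.
\]
Reading $W$ cyclically, the adjacent pairs are of three kinds: each $AB$ (start of a block) contributes an edge $A^+$--$B^-$, each $BA$ (end of a block) contributes an edge $B^+$--$A^-$, and each $BB$ (inside a block) contributes an edge $B^+$--$B^-$; there are $l$ edges of each of the first two kinds and $\sum_i(m_i-1)$ of the third. Consequently $A^+$ is joined only to $B^-$ and $A^-$ only to $B^+$. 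If some block has length $\geq 2$, that is $\sum_i(m_i-1)>0$, then $G_W$ is the connected graph $A^+$--$B^-$--$B^+$--$A^-$ (with multiplicities), and deleting the vertex $B^-$ together with its edges leaves $A^+$ isolated; hence $B^-$ is a cut-vertex. If instead every block has length $1$, then $W=(AB)^l$ with $l\geq 2$ (as $\beta$ is a proper power); regardless, $G_W$ then has no $B^+$--$B^-$ edge, so it splits into the two components $\{A^+,B^-\}$ and $\{B^+,A^-\}$ and is not connected. In every case $G_W$ is disconnected or has a cut-vertex, as required.

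The one point that needs care, and the only real obstacle, is the reduction in the first paragraph: one must check that the graph of the Heegaard diagram for the given disks really is the Whitehead graph of the cyclically reduced representative, which requires the intersections with $D_A\cup D_B$ to be essential, and that the relabelings coming from Theorem~\ref{recognizing primitives and proper powers} are genuinely induced by homeomorphisms of $H$ rather than by changes of cutting disks, since the latter would alter the diagram. Once this is granted, the argument is a finite case check; alternatively one could run the same verification against the five R-R models of Theorem~\ref{main theorem4}, but the word-based computation above seems the most economical.
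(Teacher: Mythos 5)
Your Whitehead-graph computation from the CMZ normal form is correct, and it parallels what the paper itself does when it invokes Theorem~\ref{recognizing primitives and proper powers} inside its own proof. The genuine gap is the step you flag and then set aside: the assertion that once $\beta$ meets $D_A\cup D_B$ essentially, the graph of its Heegaard diagram is the Whitehead graph of the cyclically reduced word representing $[\beta]$. That assertion is false for simple closed curves in general, so it cannot simply be ``granted'': essential position does not force the word read off by an embedded curve to be cyclically reduced, because an arc of the curve may run around one handle without meeting that handle's cutting disk (a $0$-connection in the paper's R-R terminology), and such arcs can be essential while contributing cancelling pairs of letters. Concretely, let $\tau$ be an arc meeting $\partial D_A$ only in its two endpoints, approaching $\partial D_A$ from opposite sides, and crossing $\partial D_B$ once; the boundary $\Gamma$ of a regular neighborhood of $\partial D_A\cup\tau$ is a simple closed curve whose two intersections with $\partial D_B$ are essential (each of the two arcs of $\Gamma$ cut off by $\partial D_B$ encircles one of the two copies of $\partial D_A$ in the cut-open surface, so there is no bigon), yet $\Gamma$ reads the unreduced word $BB^{-1}$: its diagram consists of two loops and is not the Whitehead graph of any cyclically reduced word. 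This is exactly why the paper's Figure~\ref{DPCFig8a-3} allows a third type of diagram, and why the remark following Corollary~\ref{unique under no level trans} certifies only the types of Figures~\ref{DPCFig8a-3}a and \ref{DPCFig8a-3}b as realizing their presentations.

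Consequently, to run your argument you would have to prove that a \emph{proper power} curve, in essential position with respect to an \emph{arbitrary} complete set of cutting disks, always reads a cyclically reduced word (equivalently, its diagram is loop-free and realizes the presentation). That statement is true, but it carries all of the geometric difficulty of the proposition, and it is essentially what Theorem~\ref{main theorem4} (Theorem 1.3 of \cite{K20c}) supplies. The paper's proof goes through that classification: for Types II--V it reads off disconnectedness or a cut-vertex directly from the figures, and only for Type I (the $0$-connection case, where the diagram has the form of Figure~\ref{DPCFig8a-3}b) does it apply Theorem~\ref{recognizing primitives and proper powers}---the counterpart of your case check. Your closing remark, that verifying the claim against ``the five R-R models'' is an alternative but less economical route, gets the logic backwards: that verification, together with the classification standing behind it, \emph{is} the proof; the word-based computation becomes valid only after the unproved identification is established, so as written your proposal silently assumes the very geometric input it was meant to avoid.
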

\begin{proof}
It is easy to see that the Heegaard diagrams of $\beta$ underlying the R-R diagrams of $\beta$ in Figures~\ref{PPower8}, \ref{PPower10}, \ref{PPower10-1}, and \ref{PPower11-1}
either are not connected or have a cut-vertex. Therefore we assume that an R-R diagram of $\beta$ has a $0$-connection on one handle.
Then $\beta$ has the corresponding Heegaard diagram of $\beta$ as illustrated in Figure~\ref{DPCFig8a-3}b. Theorem~\ref{recognizing primitives and proper powers}
forces at least one $a$ and $b$ be $0$, in which case the Heegaard diagram either is not connected or has a cut-vertex.
\end{proof}

\begin{prop}\label{necessary condition for pp curves}
Suppose $\beta$ is a proper power curve in the boundary of a genus two handlebody $H$ whose R-R diagram
of $\beta$ has $1$-connections but has no $0$-connections. Then $\beta$ satisfies the following:
\begin{enumerate}
\item The R-R diagram of $\beta$ has two bands of $1$-connections on one handle; and
\item $\beta$ does not appear with exponents $e$ and $e+1$ with $e>1$ in $\pi_1(H)$.
\end{enumerate}
\end{prop}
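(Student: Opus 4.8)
The plan is to run the argument entirely through the classification of proper power curves in Theorem~\ref{main theorem4}, using Theorem~\ref{recognizing primitives and proper powers} to bookkeep exponents and using the two hypotheses to cut down the list of admissible R-R diagrams.

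First I would invoke Theorem~\ref{main theorem4}: with respect to the given complete set of cutting disks $\{D_A,D_B\}$, the R-R diagram of $\beta$ is, up to the homeomorphisms of $H$ exchanging $A$ and $B$ and replacing $A^{-1}$ by $A$ or $B^{-1}$ by $B$, one of Types~I--V. These symmetries neither create nor destroy $0$- or $1$-connections, do not change the number of bands on a handle, and only permute and possibly negate the exponents, so it is harmless to verify both assertions on the normalized diagrams. Since by hypothesis the diagram has no $0$-connection, Type~I (which is exactly the case of a $0$-connection in some handle, cf.\ Figure~\ref{DPCFig8a-3}b) is excluded, and $\beta$ is of Type~II, III, IV, or V. In parallel, Theorem~\ref{recognizing primitives and proper powers} applied to $[\beta]=W^{k}$ shows that, after this normalization, one handle carries only winding-$1$ strands while the other carries windings in $\{e,e+1\}$ for some $e\ge 1$; the first handle is the one exhibiting the $1$-connections.

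For assertion (1) I would read the four surviving figures directly. In each, the handle all of whose windings equal $1$ carries exactly two bands of winding-$1$ strands: this is explicit in Figure~\ref{PPower10}, where $[\beta]=(AB^{s})^{a+b}$ and the $A$-strands split into the two bands of sizes $a$ and $b$, and the analogous two-band splitting on the winding-$1$ handle appears in Figures~\ref{PPower8}, \ref{PPower10-1}, and \ref{PPower11-1}. Conceptually the two bands reflect the fact that a separating disk disjoint from $\beta$ divides the winding-$1$ strands into two groups, which is why exactly two occur. This is assertion (1). For assertion (2) I would argue by elimination. Suppose $\beta$ appeared with windings $e$ and $e+1$, both present, and $e>1$; by the previous paragraph this could only be on the non-unit handle, whose windings lie in $\{e,e+1\}$. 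Inspecting Types~II--V shows the non-unit handle never carries a genuine pair of distinct windings both exceeding $1$: in Types~II and III it carries a single constant winding $s$ (Figures~\ref{PPower8} and \ref{PPower10}), in Type~IV every winding is $1$ (Figure~\ref{PPower10-1}), and the only adjacent pair $\{e,e+1\}$ that ever arises with both values present is the boundary pair $\{1,2\}$, i.e.\ $e=1$. Hence a genuine pair with $e>1$ would force a $0$-connection, returning $\beta$ to Type~I and contradicting the hypothesis; this yields assertion (2).

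The main obstacle is the honest verification of the band-and-winding data in Figures~\ref{PPower8}--\ref{PPower11-1}, in particular Type~V (Figure~\ref{PPower11-1}) with its four parameters, together with the careful separation of the admissible boundary case $e=1$ (the pair $\{1,2\}$, corresponding to $s=2$) from an inadmissible pair $\{e,e+1\}$ with $e\ge 2$. The conceptual content behind assertion (2) is that for a primitive $W$ whose non-unit exponents form a genuine pair $e,e+1$ with $e>1$, the literal word $W^{k}$ with $k\ge 2$ is not realizable as an embedded minimal R-R diagram free of $0$-connections; it is precisely Theorem~\ref{main theorem4} that encodes this fact, so the proof is in essence a check against that classification.
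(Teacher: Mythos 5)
Your overall strategy is the same as the paper's: invoke the classification of proper power curves (Theorem~\ref{main theorem4}), use the hypotheses to discard types, and read assertions (1) and (2) off the surviving figures. The paper's proof is exactly this, done in two sentences: the two hypotheses together reduce $\beta$ at once to Type III, IV, or V (Figures~\ref{PPower10}, \ref{PPower10-1}, \ref{PPower11-1}), and both assertions are then visible in those three figures.

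The gap is your handling of Type II. You discard only Type I (via ``no $0$-connections'') and keep Type II alive, then claim that Figure~\ref{PPower8} also exhibits ``the analogous two-band splitting on the winding-$1$ handle.'' That claim is false: Type II (Figure~\ref{PPower8}, with $s>1$) is precisely the type of proper power curve having no $1$-connections at all, which is why the paper needs, and uses, the hypothesis that the diagram \emph{has} $1$-connections in order to eliminate it alongside Type I. As written, your verification of assertion (1) therefore fails on one of the four cases you allow; the proposition survives only because that case is vacuous under the hypothesis you never deployed for this purpose. The repair is one line (use ``has $1$-connections'' to kill Type II), after which your argument coincides with the paper's. Two smaller points: the appeal to Theorem~\ref{recognizing primitives and proper powers} is not needed once Theorem~\ref{main theorem4} is invoked (the paper's proof does without it); and your closing inference for assertion (2), that a pair $\{e,e+1\}$ with $e>1$ ``would force a $0$-connection,'' is an unsupported embellishment --- the correct logic is simply that none of Types III, IV, V displays such a pair of exponents, so $\beta$, being of one of these types, cannot either.
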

\begin{proof}
Since $\beta$ has an R-R diagram with $1$-connections but no $0$-connections, by Theorem~\ref{main theorem4}, it has the form in Figure~\ref{PPower10}, \ref{PPower10-1}, or \ref{PPower11-1}. Then it follows from Figures~\ref{PPower10}, \ref{PPower10-1}, and \ref{PPower11-1} that the R-R diagram of $\beta$ has two bands of 1-connections on one handle, and $\beta$ does not appear with exponents $e$ and $e+1$ with $e>1$ in $\pi_1(H)$, as desired.
\end{proof}

\begin{prop}\label{sufficient condition for hyperbolicity}
Suppose $H$ is a genus two handlebody with a complete set of cutting disks $\{D_A, D_B\}$ with $\pi_1(H)=F(A,B)$, where
the generators $A$ and $B$ are dual to $D_A$ and $D_B$ respectively, and
suppose $R$ is a simple closed curve on $\partial H$ such that $H[R]$ embeds in $S^3$.

Suppose $R$ has an R-R diagram with respect to $\{D_A, D_B\}$ which has at least two bands of connections in each handle.
Let $q$ and $r$ be the maximal labels of bands of connections of $R$ in the $A$- and $B$-handles respectively. If $R$
satisfies one of the following, then $H[R]$ is not the exterior of the unknot, a torus knot, or a cable knot.
\begin{enumerate}
\item $|q|, |r|>2$.
\item one of $|q|$ and $|r|$ is $2$, say $|r|=2$, and $|q|>2$, and $R$ has a subarc representing word $A^eB^rA^f$ with $e\neq f$ in $\pi_1(H)$.
\item $|q|=|r|=2$ and $R$ has two subarcs representing words $A^eB^rA^f$ and $B^gA^qB^h$ with $e\neq f$ and $g\neq h$ in $\pi_1(H)$.
\end{enumerate}
\end{prop}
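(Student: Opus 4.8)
The plan is to argue by contradiction through Theorem~\ref{nonhyperbolicity from a proper power}. Suppose $H[R]$ \emph{is} the exterior of the unknot, a torus knot, or a cable knot. Then that theorem produces a proper power curve $\beta$ in $H$ disjoint from $R$, and the entire problem reduces to showing that the existence of such a $\beta$ is incompatible with each of the hypotheses (1)--(3). Since $\beta\cap R=\emptyset$, I would keep the given cutting disks $\{D_A,D_B\}$ fixed and read $R$ and $\beta$ together in the one R-R diagram they determine.

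The first ingredient is the rigidity of $\beta$. By Theorem~\ref{main theorem4}, $\beta$ falls into one of the Types I--V, and from these forms (together with Theorem~\ref{recognizing primitives and proper powers} and Proposition~\ref{necessary condition for pp curves}) one reads off that, after possibly replacing $A$ by $A^{-1}$ or $B$ by $B^{-1}$, $\beta$ can carry a band whose label has absolute value greater than $1$ in \emph{at most one} of the two handles: either $\beta$ has a $0$-connection, or one generator occurs in $\beta$ only to the first power while the exponents of the other form a single value or a pair $\{1,2\}$. I would record this as the basic obstruction, namely that a proper power curve is winding-heavy in at most one handle.

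The second and decisive ingredient is a parallelism principle: because $\beta$ is disjoint from $R$ and $R$ has at least two bands in each handle, $\beta$ is forced to run alongside the bands of $R$, so in any handle where $R$ has a band of label $\ell$ the curve $\beta$ must itself wind comparably, and around a band of label $2$ it must pass symmetrically. Granting this, the three cases close cleanly. In case (1), $|q|,|r|>2$ would require $\beta$ to be winding-heavy in \emph{both} handles, contradicting the basic obstruction, so no disjoint $\beta$ exists. In case (2), $|r|=2<|q|$ pins the heavy handle of $\beta$ to the $A$-handle and forces $\beta$ to cross the $B$-handle with single strands; a subarc $A^{e}B^{r}A^{f}$ with $e\neq f$ then breaks the symmetry $\beta$ needs across the label-$2$ band and forces $\beta$ to meet $R$. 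In case (3), $|q|=|r|=2$ leaves both roles of the two handles a priori available to $\beta$, but the subarc $A^{e}B^{r}A^{f}$ with $e\neq f$ rules out the role in which $\beta$ is single-stranded in the $B$-handle, while $B^{g}A^{q}B^{h}$ with $g\neq h$ rules out the other role; hence again no disjoint proper power curve can exist. In every case Theorem~\ref{nonhyperbolicity from a proper power} then gives that $H[R]$ is not the exterior of the unknot, a torus knot, or a cable knot.

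I expect the parallelism principle of the previous paragraph to be the main obstacle. Turning ``$\beta$ is disjoint from $R$'' into honest inequalities between the band labels and flanking exponents of $R$ and the exponents of $\beta$ requires a diagram-level analysis of how the two disjoint arc systems share the complementary regions of $D_A\cup D_B$, comparing against the relator forms of Figures~\ref{PSFFig3aa} and \ref{PSFFig3ab} and using that $\beta$'s Heegaard diagram is disconnected or has a cut-vertex (Proposition~\ref{a proper power is disconnected and has a cut-vertex}). The $0$-connection cases (Types~I and II) will need separate handling, since there a strand of $\beta$ can be slid off a handle and the constraint it imposes on the labels of $R$ is the least direct; this is where I would expect the argument to require the most care.
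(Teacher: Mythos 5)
Your overall skeleton coincides with the paper's: assume $H[R]$ is the exterior of the unknot, a torus knot, or a cable knot, invoke Theorem~\ref{nonhyperbolicity from a proper power} to produce a proper power curve $\beta$ disjoint from $R$, place $\beta$ in the R-R diagram of $R$ with respect to the same cutting disks, run through the types of Theorem~\ref{main theorem4}, and contradict each of (1)--(3); and your case analysis, granting your ``parallelism principle,'' is exactly the paper's. The genuine gap is that this principle is the entire mathematical content of the proposition, and you leave it unproven --- you say so yourself. Worse, the route you suggest for establishing it is misdirected: it does not come from comparing $R$ against the relator forms of Figures~\ref{PSFFig3aa} and \ref{PSFFig3ab}, nor from Proposition~\ref{a proper power is disconnected and has a cut-vertex}; it comes from the explicit R-R diagrams of the proper power types themselves. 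The paper's argument is: by Figures~\ref{PPower10}, \ref{PPower10-1}, and \ref{PPower11-1}, a Type III, IV, or V curve $\beta$ carries two bands of $1$-connections on one handle; since connections of disjoint curves over a common handle cannot cross, any curve disjoint from such a $\beta$ can have connections on that handle only of label $2$ up to sign, and any $2$-connection of $R$ must enter and return around $\beta$'s two bands symmetrically --- which is precisely the statement that a subarc of $R$ reading $B^eA^{\pm2}B^f$ has $e=f$. That read-off from the type diagrams is the proof; without it, your cases (1)--(3) are a restatement of the proposition rather than a deduction.

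A second, smaller defect: your handling of Types I and II is both mislabeled and inverted in difficulty. Type II (Figure~\ref{PPower8}) is not a $0$-connection type --- Type I is --- and neither needs a separate ``sliding'' analysis. Both are excluded immediately because under each of hypotheses (1)--(3) the R-R diagram of $R$ has at least two bands of connections in each handle with maximal label of absolute value at least $2$, which no curve of Type I or II can be disjoint from. So the cases you flag as requiring the most care are the trivial ones, while the case you grant yourself without proof is the one carrying all the weight.
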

\begin{proof}
Suppose for contradiction that $H[R]$ is the exterior of the unknot, a torus knot, or a cable knot. Then by Theorem~\ref{nonhyperbolicity from a proper power}, there exists a proper power curve $\beta$ in $H$ disjoint from $R$. Now we add the curve $\beta$ in the R-R diagram of $R$. By the classification of proper power curves in Theorem~\ref{main theorem4}, $\beta$ has an R-R diagram of one of the six types I-VI with respect to $\{D_A, D_B\}$. However the types I and II are impossible because the R-R diagram of $R$ has at least two bands of connections with the absolute value of the maximal label of bands of connection greater than or equal to $2$ in each handle.

For the types III, IV, and V of the R-R diagram of $\beta$, we can observe from Figures~\ref{PPower10}, \ref{PPower10-1}, and \ref{PPower11-1} that the possible maximal label of band of connection of the R-R diagram of $R$ in the $A$-handle would be $2$ up to sign and furthermore if $R$ has a subarc representing $B^eA^2B^f$ in $\pi_1(H)$, then $e=f$. This contradicts all the conditions (1), (2), and (3) in the hypothesis.
\end{proof}

\section{Finding $R$ from a P/P or P/SF knot $k$ and a meridian $M$ of $k$}\label{Finding $R$ from a P/P or P/SF knot $k$ and a meridian $M$ of $k$}

Suppose, as usual, $\alpha$ is P/P or P/SF in a genus two Heegaard splitting $(\Sigma; H, H')$ of $S^3$ assuming that $\alpha$ is primitive in $H'$ in both cases. Let $k$ be a knot in $S^3$ represented by $\alpha$. Then there is a unique cutting disk $D_R$ in $H'$ such that the boundary $R$ of $D_R$ is disjoint from $\alpha$ and $H[R]$ is homeomorphic to the exterior of $k$ in $S^3$. Suppose $M$ is the meridian of $H[R]$ such that $M\cap \alpha$
is a single point of transverse intersection. Let $F$ be a regular neighborhood of $M\cup \alpha$ in $\partial H$,
which is a once-punctured torus in $\partial H$, and let $\Gamma=\partial F$.

We suppose $\{D_A, D_B\}$ is a complete set of cutting disks of $H$ such that the graph $G(\Gamma)$ of the Heegaard diagram of $\Gamma$
with respect to $\{D_A, D_B\}$ is robust. In general, the graph $G(C)$ of the Heegaard diagram of a simple closed curve $C$ is defined to be \textit{robust}
if deleting any pair of edges, say $e, e'$ from $G(C)$, such that $\{e, e'\}$ is invariant under the hyperelliptic involution,
leaves a graph $G'$, which is connected and has no cut-vertex. As an example, if $C$ has a graph of the form in Figure~\ref{DPCFig8a-3}a with $a,b,c,d>0$,
then $G(C)$ is robust. While, if one of $c$ and $d$ is $0$ and $a=b=1$, then $G(C)$ is not robust. Note that if $C$ is
a separating curve such as $\Gamma$ above and has a graph of the form in Figure~\ref{DPCFig8a-3}a, then the weights $a$ and $b$ are odd, and $c$ and $d$ are even.

Let $F'=\overline{\partial H-F}$. Then the simple closed curve $R$ lies completely in $F'$.

\begin{lem}\label{Robust lemma with two bands}
$(\partial D_A\cup \partial D_B)\cap F$ and $(\partial D_A\cup \partial D_B)\cap F'$ consist of at lease two or three bands of connections in
$F$ and $F'$ respectively.
\end{lem}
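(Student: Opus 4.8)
The plan is to argue by contradiction against the robustness of $G(\Gamma)$, translating a deficiency in the number of bands on one side of $\Gamma$ into a pair of edges of $G(\Gamma)$ whose deletion disconnects the graph or leaves a cut-vertex. First I would fix the decomposition $\partial H = F \cup_\Gamma F'$ into the two once-punctured tori determined by the separating curve $\Gamma = \partial F$, and record that the $1$-manifolds $(\partial D_A\cup \partial D_B)\cap F$ and $(\partial D_A\cup \partial D_B)\cap F'$ are collections of arcs properly embedded in $F$ and $F'$ with all endpoints on $\Gamma$. A band of connections is then a maximal parallel family of such arcs, and each band is carried by a rectangle in $F$ (resp. $F'$) two of whose sides lie on $\Gamma$ and are therefore subarcs of edges of $G(\Gamma)$. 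This supplies the dictionary I need: the edges of $G(\Gamma)$ incident to each of the four vertices $A^{\pm}, B^{\pm}$ are organized, side by side along $\Gamma$, by the bands abutting $\Gamma$ from the $F$-side and from the $F'$-side.

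Next I would feed in the hypotheses on $\Gamma$. Since $\Gamma$ is separating and $G(\Gamma)$ has the form of Figure~\ref{DPCFig8a-3}a, the weights obey the parity noted just before the lemma ($a,b$ odd, hence positive, and $c,d$ even), while robustness says that deleting any hyperelliptic-invariant pair of edges leaves $G(\Gamma)$ connected and cut-vertex-free. The strategy is to show that if $(\partial D_A\cup \partial D_B)\cap F$ formed a single band, then all $F$-arcs would be mutually parallel, and the two edges of $G(\Gamma)$ bounding the outermost such rectangle would constitute an $\iota$-orbit, i.e.\ a hyperelliptic-invariant pair, whose removal separates the $A$-vertices from the $B$-vertices or exposes a cut-vertex; this contradicts robustness and forces at least two bands in $F$. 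The same mechanism, now exploiting that $F'$ is the side carrying $R$ together with the even parity of $c$ and $d$, is meant to upgrade the $F'$-bound to at least three bands, since on that side a two-band configuration again manufactures a forbidden hyperelliptic pair.

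Throughout I would keep the hyperelliptic involution $\iota$ in the foreground, because robustness only forbids deletions of $\iota$-invariant pairs: $\iota$ preserves $\Gamma$ setwise and restricts to each of $F, F'$ as the order-two symmetry of a once-punctured torus, so the arcs and hence the outermost rectangles on each side are permuted by $\iota$ and the two boundary edges to be deleted form a genuine $\iota$-orbit. The main obstacle will be making the band-to-edge dictionary precise and, crucially, verifying that a minimal band count really produces a disconnection or a cut-vertex rather than a harmless parallel doubling of edges (which leaves connectivity intact); this demands a careful case analysis of how the bands attach to the four vertices, carried out separately for $F$, where the arcs are constrained by the once-punctured-torus generators $\alpha$ and the meridian $M$, and for $F'$, where the extra band is forced by the presence of $R$ together with the even parity of $c$ and $d$.
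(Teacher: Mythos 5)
Your reading of the statement is off, and this derails half of the proposal. The lemma asserts that each of $(\partial D_A\cup \partial D_B)\cap F$ and $(\partial D_A\cup \partial D_B)\cap F'$ consists of at least two bands (equivalently, of two or three bands, three being the maximal possible number of pairwise disjoint, pairwise non-parallel essential arcs in a once-punctured torus); it does not assert three bands in $F'$. This is confirmed by how the lemma is used: immediately after it the paper sets $B_3=\varnothing$ when $F$ has only two bands, and Proposition~\ref{Possible R in F' from Robust lemma} explicitly treats the case of exactly two bands of $(\partial D_A\cup \partial D_B)\cap F'$, listing the candidates $\overline{P'}_{i_0}{\overline{P'}_j}^{m}$ for that case. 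So the strengthening you aim at for $F'$ --- ``a two-band configuration again manufactures a forbidden hyperelliptic pair'' --- is not only unproved in your sketch (you give no argument, only ``is meant to''), it is inconsistent with the paper's own framework: a two-band configuration on the $F'$-side is compatible with robustness of $G(\Gamma)$, and neither the parity of $c,d$ nor the fact that $R\subset F'$ changes this. What must be ruled out on each side is exactly one band, nothing more; the two sides play symmetric roles in the proof.

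For the part you interpret correctly (at least two bands in $F$), your mechanism is close to the paper's but inserts an unjustified step. The paper's proof observes that if $(\partial D_A\cup \partial D_B)\cap F$ were a single band, then the two subarcs of $\Gamma$ running around the ends of the band each have both endpoints at one cutting disk, which forces $G(\Gamma)$ itself to be of the form of Figure~\ref{robust-1}b, hence disconnected or with a cut-vertex; this already contradicts robustness, since deleting a pair of edges can never restore connectivity or remove such a cut-vertex. You instead delete the two outermost edges and need them to form a hyperelliptic-invariant pair, a claim you assert but do not verify (and which is delicate: if one outermost arc of the band lies on $\partial D_A$ and the other on $\partial D_B$, the two boundary edges are of different types, the involution cannot exchange them, and invariance of the pair is not automatic). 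This detour is unnecessary: once the one-band hypothesis makes $G(\Gamma)$ disconnected or cut-vertexed, robustness fails for every invariant pair. Recast the argument in that form, apply it verbatim to $F'$, and drop the claimed three-band upgrade.
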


\begin{figure}[t]
\includegraphics[width = 0.8\textwidth]{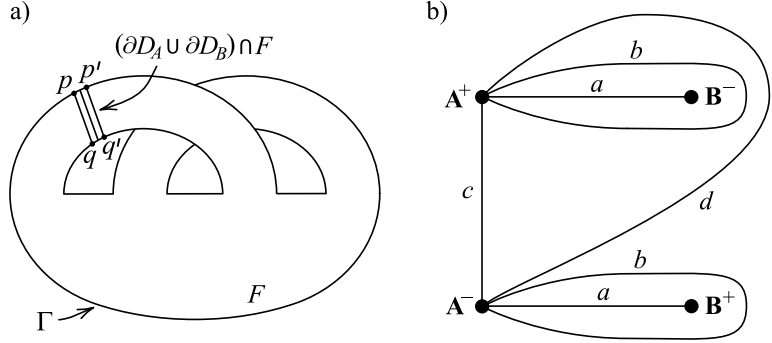}\caption{Only one band of connections of $(\partial D_A\cup \partial D_B)\cap F$ in $F$
and the Heegaard diagram of $\Gamma$.}\label{robust-1}
\end{figure}

\begin{proof}
Suppose $(\partial D_A\cup \partial D_B)\cap F$ consists of only one band of connections in $F$. Then it appears as in Figure~\ref{robust-1}a.
There exist two subarcs of $\Gamma$ each of which has both endpoints at one cutting disk. Figure~\ref{robust-1}a
shows such two subarcs one of which has two endpoints $p$ and $q$ and the other $p'$ and $q'$. It follows that
$G(\Gamma)$ is of the form in Figure~\ref{robust-1}b with $b>0$, which is disconnected or has a cut-vertex.
This is a contradiction to the robustness of $G(\Gamma)$. Similarly for $F'$.
\end{proof}

Let $B_1, B_2$, and $B_3$ denote the three bands, and let $P_1, P_2,$ and
$P_3$ denote paths in $F$ transverse to $B_1, B_2$, and $B_3$ respectively.
If there are two bands in $F$, then we assume that $B_3=\varnothing$.
See Figure~\ref{robust-2} when there are two or three bands.

\begin{figure}[t]
\includegraphics[width = 0.8\textwidth]{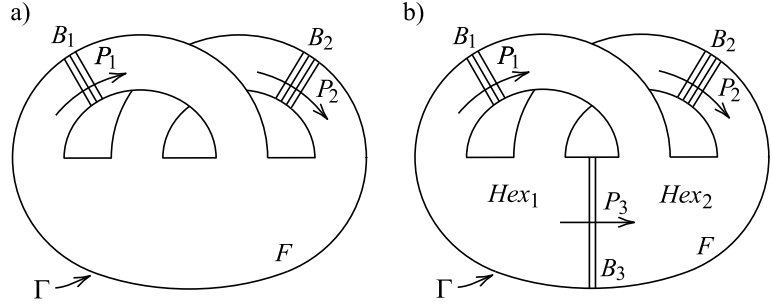}\caption{The three bands $B_1, B_2$, and $B_3$ of $(\partial D_A\cup \partial D_B)\cap F$
and the paths $P_1, P_2,$ and $P_3$ in $F$ transverse to $B_1, B_2$, and $B_3$ respectively. }\label{robust-2}
\end{figure}

When there are two bands, we let $\overline{P}_1$ and $\overline{P}_2$ be
simple closed curves disjoint from $B_2$ and $B_1$ respectively.

When there are three bands, $B_1, B_2$, and $B_3$ cut $F$ into rectangles and two nonrectangular faces $Hex_1$ and $Hex_2$.
In this case, suppose $P_1, P_2,$ and $P_3$ are oriented paths from $Hex_1$ to $Hex_2$ as illustrated in Figure~\ref{robust-2}b.
We let $P_1P_2^{-1}, P_1P_3^{-1}$, and $P_2P_3^{-1}$ be oriented simple closed curves disjoint from $B_3, B_2,$ and $B_1$, which are
induced by connecting the two oriented paths $P_1$ and $P_2^{-1}$, $P_1$ and $P_3^{-1}$, and $P_2$ and $P_3^{-1}$ respectively.
Similarly, we can define $B'_1, B'_2, B'_3, P'_1, P'_2, P'_3$, etc for $F'$.

Now we suppose that $H[R]$ is the exterior of the unknot, a torus knot, or cable knot in $S^3$.
Theorem~\ref{nonhyperbolicity from a proper power} indicates that there exists a proper power curve $\beta$ in $\partial H$, disjoint from $R$.
Since $R$ lies in $F'$, either $\beta$ lies completely in $F$ or $\beta$ has an essential intersection with $\Gamma$.

\begin{prop}\label{Robust lemma in F}
If $\beta$ lies completely in $F$, then $\beta$ must be disjoint from one of the bands of $(\partial D_A\cup \partial D_B)\cap F$.
Therefore, if there are two bands $B_1$ and $B_2$ of $(\partial D_A\cup \partial D_B)\cap F$,
then $\beta$ is either $\overline{P}_1$ or $\overline{P}_2$. If there are three bands $B_1, B_2$, and $B_3$,
then $\beta$ is either $P_1P_2^{-1}, P_1P_3^{-1},$ or $P_2P_3^{-1}$.
\end{prop}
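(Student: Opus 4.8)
The plan is to prove the proposition in two stages: first to show that a proper power curve $\beta$ lying in $F$ must be isotopic off at least one of the bands of $(\partial D_A\cup\partial D_B)\cap F$, and then to identify $\beta$ explicitly as the named curve by an annulus argument in the complement of that band. Since $R\subset F'$ while $\beta\subset F$, the two curves automatically meet only along $\Gamma$, so the only data to control is how $\beta$ crosses the bands $B_1,B_2,B_3$ inside the once-punctured torus $F$. Before the main argument I would record two reductions. First, $\beta$ is essential in $F$: if it bounded a disk in $F$ it would bound one in $H$ and could not be a proper power. Second, $\beta$ is not boundary-parallel, since $\Gamma=\partial F$ is null-homologous in $\partial H$ and hence dies in $H_1(H)$, whereas $[\beta]=k[w]$ for a primitive $w$ and $k\ge 2$ is nonzero in $H_1(H)\cong\mathbb{Z}^2$. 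Thus $\beta$ is an essential, non-boundary-parallel simple closed curve in $F$, which I isotope to meet the bands minimally.

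The key step is to rule out $\beta$ crossing every band. Suppose it crosses all of $B_1,B_2,B_3$ (or both $B_1,B_2$ in the two-band case of Lemma~\ref{Robust lemma with two bands}). Because $\beta$ is a proper power, Proposition~\ref{a proper power is disconnected and has a cut-vertex} says the Heegaard diagram of $\beta$ with respect to $\{D_A,D_B\}$ is disconnected or has a cut-vertex, and I claim the opposite holds when all bands are used. Each band is a parallel family of arcs of $\partial D_A$ or $\partial D_B$, so it records the same adjacency of the vertices $A^\pm,B^\pm$ for $\beta$ as it does for $\Gamma$; an arc of $\beta$ running through one of the hexagonal faces $Hex_1,Hex_2$ (see Figure~\ref{robust-2}) joins two distinct bands and thereby links their vertex-data. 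Using all the bands forces $\beta$ to traverse both hexagons and to realize the full band-connection pattern that makes the diagram of $\Gamma$ robust; since that pattern stays connected and cut-vertex-free even after deleting a hyperelliptic-invariant edge pair, the diagram of $\beta$ is connected and has no cut-vertex. This contradicts Proposition~\ref{a proper power is disconnected and has a cut-vertex}, so $\beta$ must miss some band.

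It then remains to identify $\beta$. Say $\beta$ is disjoint from $B_3$ in the three-band case. Cutting $F$ along the core of $B_3$ raises the Euler characteristic from $-1$ to $0$ and merges $Hex_1$ with $Hex_2$, exhibiting $F\smallsetminus B_3$ as an annulus whose core, assembled from the two surviving transverse paths, is exactly $P_1P_2^{-1}$. As $\beta$ is essential and not boundary-parallel, it is isotopic to this core, so $\beta=P_1P_2^{-1}$; missing $B_2$ or $B_1$ instead gives $\beta=P_1P_3^{-1}$ or $P_2P_3^{-1}$, and the same argument in the two-band case gives $\beta=\overline{P}_1$ or $\overline{P}_2$, with $\overline{P}_1$ the core of the annulus $F\smallsetminus B_2$ and $\overline{P}_2$ the core of $F\smallsetminus B_1$.

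The main obstacle I anticipate is the connectivity claim in the key step: one must verify that it is the \emph{signed} vertex-adjacencies of the diagram of $\Gamma$, not merely the unsigned list of bands, that are inherited by the diagram of $\beta$ when all bands are crossed, and that the hyperelliptic-invariant edge-pair deletion built into the definition of robustness translates precisely into ``connected with no cut-vertex'' for $\beta$. Treating the hexagonal faces carefully, and separately handling the two-band case in which only two bands are available to generate the required connectivity, is where the real work lies.
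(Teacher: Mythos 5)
Your proposal follows essentially the same route as the paper: the core step---that if $\beta$ crossed every band of $(\partial D_A\cup\partial D_B)\cap F$, the graph of its Heegaard diagram would inherit all but at most a hyperelliptic-invariant pair of edges of the robust graph $G(\Gamma)$, hence be connected with no cut-vertex, contradicting Proposition~\ref{a proper power is disconnected and has a cut-vertex}---is exactly the paper's argument. Your annulus argument identifying $\beta$ with $\overline{P}_1$, $\overline{P}_2$, or $P_iP_j^{-1}$, together with the preliminary essentiality and non-boundary-parallelism reductions, merely makes explicit the ``Therefore'' clause that the paper leaves implicit, so it is a fleshed-out version of the same proof rather than a different one.
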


\begin{figure}[t]
\includegraphics[width = 0.8\textwidth]{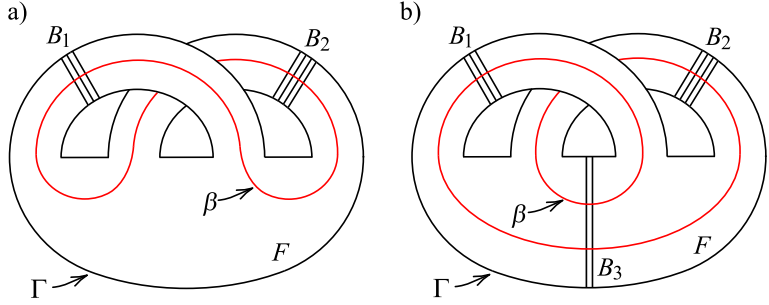}\caption{A proper power $\beta$ which
intersects each of the bands of $(\partial D_A\cup \partial D_B)\cap F$.}\label{robust-3}
\end{figure}

\begin{proof}
Suppose that $\beta$ intersects each of the bands of $(\partial D_A\cup \partial D_B)\cap F$. Then the graph $G(\beta)$ of
the Heegaard diagram of $\beta$ with respect to $\{D_A, D_B\}$ omits at most two edges of $G(\Gamma)$,
which are invariant under the hyperelliptic involution. See Figure~\ref{robust-3}, where $\beta$ intersects each of the bands of $(\partial D_A\cup \partial D_B)\cap F$.
Since $G(\Gamma)$ is robust, $G(\beta)$ is connected and has no cut-vertex. This is contradiction to Proposition~\ref{a proper power is disconnected and has a cut-vertex}.
\end{proof}

\begin{lem}\label{Robust lemma in F'}
Suppose that $\beta$ has essential intersections with $\Gamma$. Let $\delta$ be a connection in $\beta\cap F'$, and
suppose that $\delta$ has been isotoped, keeping its endpoints on $\Gamma$ so $\delta$ has only essential
intersections with the bands of $(\partial D_A\cup \partial D_B)\cap F'$. Then $\delta$ must be disjoint from one of the bands of $(\partial D_A\cup \partial D_B)\cap F'$.
\end{lem}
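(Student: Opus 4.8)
The plan is to argue by contradiction, mirroring the proof of Proposition~\ref{Robust lemma in F} but with the closed curve lying in $F$ replaced by the single connection $\delta$ lying in $F'$. So I would first suppose that $\delta$ meets every band of $(\partial D_A\cup\partial D_B)\cap F'$; that is, it meets $B'_1$ and $B'_2$, and also $B'_3$ when there are three bands. The goal is to deduce that this forces the Heegaard diagram $G(\beta)$ of the whole proper power curve $\beta$, taken with respect to $\{D_A,D_B\}$, to be connected and to have no cut-vertex, which is impossible by Proposition~\ref{a proper power is disconnected and has a cut-vertex}.

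The key step is to promote the arc $\delta$ to a closed curve that can be compared with $\Gamma$ directly. Since $\delta$ is an arc in $F'$ with both endpoints on $\Gamma=\partial F'$, I would cap it off by a sub-arc $\tau$ of $\Gamma$ joining its endpoints, pushed slightly into $F'$, to form a simple closed curve $\hat\delta=\delta\cup\tau$. Because $\delta$ crosses every band while $\tau$ runs along $\Gamma$, the Heegaard diagram of $\hat\delta$ with respect to $\{D_A,D_B\}$ should reproduce $G(\Gamma)$ (whose form is that of Figure~\ref{DPCFig8a-3}a, with $a,b$ odd and $c,d$ even since $\Gamma$ is separating) except at the two places where $\delta$ leaves and re-enters $\Gamma$; in other words $G(\hat\delta)$ is obtained from $G(\Gamma)$ by deleting at most two edges, and these two edges form a pair invariant under the hyperelliptic involution. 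The robustness of $G(\Gamma)$ then guarantees that deleting such an invariant pair leaves a connected graph with no cut-vertex, so $G(\hat\delta)$ is connected and has no cut-vertex.

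It then remains to transfer this conclusion from $\hat\delta$ to $\beta$. The crossings of $\delta$ with the bands are a sub-collection of the crossings of $\beta$ with $D_A\cup D_B$, so the edges they contribute already lie in $G(\beta)$; the comparison with the robust graph $G(\Gamma)$ is meant to show that, under the standing assumption that $\delta$ meets all bands, these $\delta$-edges by themselves realize a connected subgraph of $G(\beta)$ on the four vertices $A^{+},A^{-},B^{+},B^{-}$ with no cut-vertex. Since adjoining the remaining edges of $\beta$ among these same four vertices can neither disconnect such a graph nor create a cut-vertex, $G(\beta)$ is connected and has no cut-vertex, giving the desired contradiction. As in Proposition~\ref{Robust lemma in F}, I would treat the two-band and three-band configurations in parallel, the role of the capping being organized by $\overline{P}'_1,\overline{P}'_2$ in the former and by the paths $P'_1,P'_2,P'_3$ across the hexagons $Hex'_1$ and $Hex'_2$ in the latter in order to locate the two deleted edges.

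The step I expect to be the main obstacle is precisely the edge count in the second paragraph, together with the transfer in the third: confirming that, under the hypothesis that $\delta$ meets all bands, capping $\delta$ alters $G(\Gamma)$ by the deletion of at most two edges forming a hyperelliptic-invariant pair, and—more delicately—that the connectivity and the absence of a cut-vertex are carried by the edges coming from $\delta$ itself rather than by the capping arc $\tau$ (whose crossings need not appear in $G(\beta)$). This requires carefully matching the endpoints of $\delta$ on $\Gamma$ against the band endpoints and checking that the interior band-crossings of $\delta$ account for all of the remaining crossings recorded by $\Gamma$; the invariance of the deleted pair should follow from the symmetry of the band system under the involution, but must be verified separately in the two-band and three-band cases.
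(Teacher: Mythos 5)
Your overall strategy --- argue by contradiction, use robustness of $G(\Gamma)$, and conclude via Proposition~\ref{a proper power is disconnected and has a cut-vertex} --- is the same as the paper's, but the central technical step is organized differently, and the difference creates a genuine gap, one you flag yourself but do not close. The paper's proof never caps $\delta$ off: it asserts directly (this is the content of Figure~\ref{robust-4}, in parallel with Figure~\ref{robust-3} for Proposition~\ref{Robust lemma in F}) that if the arc $\delta$ meets every band of $(\partial D_A\cup\partial D_B)\cap F'$, then already the graph $G(\delta)$ of the Heegaard diagram of $\delta$ itself omits at most two edges of $G(\Gamma)$. Robustness then makes $G(\delta)$ connected with no cut-vertex, and since every edge of $G(\delta)$ is an edge of $G(\beta)$ (because $\delta\subset\beta$), the same holds for $G(\beta)$, contradicting Proposition~\ref{a proper power is disconnected and has a cut-vertex}. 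The inclusion $G(\delta)\subseteq G(\beta)$ is what makes the transfer to $\beta$ automatic; no statement about any auxiliary closed curve is needed.

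Your detour through the capped curve $\hat\delta=\delta\cup\tau$ breaks exactly this inclusion, in two ways. First, the claim that $G(\hat\delta)$ is $G(\Gamma)$ with at most two edges deleted is unjustified: $\tau$ runs along only one of the two subarcs of $\Gamma$ determined by the endpoints of $\delta$, so the edges of $G(\Gamma)$ carried by the complementary subarc --- of which there can be arbitrarily many, since $\beta$ may cross $\Gamma$ at many points far from $\partial\delta$ --- are traversed neither by $\tau$ nor, a priori, by $\delta$. Second, and more fundamentally, even where a statement of this kind about $G(\hat\delta)$ can be arranged, the connectivity and the absence of a cut-vertex it yields may be carried by the $\tau$-edges, which are not edges of $G(\beta)$; so, as you yourself observe, nothing transfers to $\beta$. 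Repairing this requires showing that the $\delta$-edges alone contain $G(\Gamma)$ minus a hyperelliptic-invariant pair of edges --- which is precisely the paper's direct claim about $G(\delta)$, and once that is established the capping construction is superfluous. As written, the proposal identifies its decisive step as an unresolved obstacle rather than proving it, so it does not constitute a proof of the lemma.
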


\begin{figure}[t]
\includegraphics[width = 0.8\textwidth]{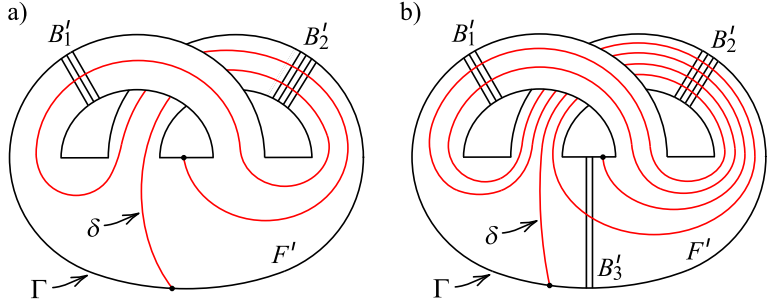}\caption{$\delta$ intersects each of the bands of $(\partial D_A\cup \partial D_B)\cap F'$ essentially.}\label{robust-4}
\end{figure}

\begin{proof}
The proof is similar to that of Proposition~\ref{Robust lemma in F}. If $\delta$ intersects each of the bands of $(\partial D_A\cup \partial D_B)\cap F'$,
then the graph $G(\delta)$ of the Heegaard diagram of $\delta$ with respect to $\{D_A, D_B\}$ omits at most two edges of $G(\Gamma)$.
See Figure~\ref{robust-4}. Since $G(\Gamma)$ is robust, $G(\delta)$ is connected and has no cut-vertex
and so is $G(\beta)$, a contradiction.
\end{proof}

Let $I=\{1,2\}(\{1,2,3\}$, resp.) when there are two(three, resp.) bands of $(\partial D_A\cup \partial D_B)\cap F'$.
For $i\in I$, let $|P'_i|$ denote the number of appearances of ${P'_i}^{\pm1}$ in $R$. Note that $R$ can be expressed
in terms of the oriented paths $P'_i$'s as we define $P'_1{P'_2}^{-1}, P'_1{P'_3}^{-1}$, and $P'_2{P'_3}^{-1}$ as simple closed curves in $F'$. The following proposition
describes all possible candidates for $R$ when $\beta$ has an essential intersection with $\Gamma$.

\begin{prop}\label{Possible R in F' from Robust lemma}
Suppose that $\beta$ has an essential intersection with $\Gamma$.
Then there exists $i_0\in I$ such that $|P'_{i_0}|=1$.

Therefore if there are two bands of $(\partial D_A\cup \partial D_B)\cap F'$, then the possible $R$
is of the form $\overline{P'}_{i_0}$ ${\overline{P'}_j}^{m}$ for $j\in\{1,2\}$ with $j\neq i_0$ and $m\in\mathbb{Z}$.

If there are three bands of $(\partial D_A\cup \partial D_B)\cap F'$,
the possible $R$ is of the form $P'_{i_0}{P'_j}^{-1}(P'_k{P'_j}^{-1})^m$ for distinct $j, k\in\{1,2,3\}$ with $j, k\neq i_0$ and $m\in\mathbb{Z}$.
\end{prop}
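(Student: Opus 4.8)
The plan is to pull off a single connection of $\beta$ in $F'$, use Lemma~\ref{Robust lemma in F'} to see that it misses one of the bands, and then exploit disjointness from $R$ to pin down $R$ up to isotopy inside the annulus obtained by cutting $F'$ along that connection. First I would fix one connection $\delta$ of $\beta\cap F'$. Since $\beta$ has an essential intersection with $\Gamma$, the arc $\delta$ is essential in $F'$: if $\delta$ were boundary-parallel, the two intersection points of $\beta$ with $\Gamma$ at the ends of $\delta$ could be isotoped away, contradicting essentiality. By Lemma~\ref{Robust lemma in F'}, after isotoping $\delta$ so that it meets the bands essentially, $\delta$ is disjoint from one of the bands, say $B'_{i_0}$.

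The key reduction is the following. Because $\delta\subset\beta$ and $\beta$ is disjoint from $R$, the arc $\delta$ is disjoint from $R$. Now $F'=\overline{\partial H-F}$ is a once-punctured torus, since $\partial H$ has genus two ($\chi=-2$) and $F$ is a once-punctured torus ($\chi=-1$), so $\chi(F')=-1$ with the single boundary circle $\Gamma$. Cutting the once-punctured torus $F'$ along the essential arc $\delta$ therefore yields an annulus $A_\delta$, and $R\subset A_\delta$. Since $R$ meets $D_A\cup D_B$ essentially it is not null-homotopic, and it is not parallel to $\Gamma$ (otherwise $H[R]$ would not have a single torus boundary as required of a knot exterior); hence $R$ is essential in $A_\delta$ and thus isotopic to the core of $A_\delta$. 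In other words, \emph{up to isotopy $R$ is the unique essential simple closed curve of $F'$ disjoint from $\delta$}.

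Next I would count band crossings of the core. Since $\delta$ avoids $B'_{i_0}$, the band $B'_{i_0}$ survives as a sub-band of the annulus $A_\delta$, lying either as a spanning band or parallel into $\partial A_\delta$. In the spanning case the core, and hence $R$, crosses $B'_{i_0}$ exactly once, so $|P'_{i_0}|=1$. In the remaining case $R$ is disjoint from $B'_{i_0}$, i.e. $|P'_{i_0}|=0$; but $R$ is essential and so must cross some band, and crossing only the other band(s) it is forced to be the curve dual to one of them, which it meets exactly once. Either way there is an index with $|P'_{\,\cdot}|=1$, which is the first assertion. Finally, given $|P'_{i_0}|=1$, reading the band crossings of the embedded curve $R$ beginning at its unique passage $P'_{i_0}$ produces a cyclic word in the remaining paths: in the two-band case the only remaining path is $P'_j$, giving $R=\overline{P'}_{i_0}\,\overline{P'}_j^{\,m}$, and in the three-band case the two complementary hexagons $Hex_1,Hex_2$ force the crossings to alternate between $Hex_1$ and $Hex_2$, so that simplicity of $R$ makes the intermediate crossings parallel copies, yielding $R=P'_{i_0}{P'_j}^{-1}\,(P'_k{P'_j}^{-1})^{m}$.

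The hard part will be the crossing count together with the exact word form, and in particular the three-band case. One must rule out the core of $A_\delta$ wrapping around $B'_{i_0}$ more than once, treat cleanly the subcase where $B'_{i_0}$ is boundary-parallel in $A_\delta$ (checking that then one of the surviving bands is crossed exactly once), and, most delicately, convert the isotopy statement ``$R\simeq$ core of $A_\delta$'' into the stated reduced word using the $Hex_1/Hex_2$ combinatorics and the embeddedness of $R$. The reduction of the problem to an annulus is the clean crux; everything after it is bookkeeping in the Heegaard/hybrid picture, but that bookkeeping is where the precise form of $R$ must be justified.
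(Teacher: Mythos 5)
Your proposal is correct, and it pivots on the same lemma as the paper's proof --- Lemma~\ref{Robust lemma in F'}, applied to a connection $\delta$ of $\beta\cap F'$ --- but it runs the argument in the opposite direction and actually proves the step the paper only asserts. The paper's proof is a two-line contradiction: if $|P'_i|>1$ for every $i\in I$, then any connection $\delta$ of $\beta$ in $F'$, being disjoint from $R$, would have to cross every band, contradicting Lemma~\ref{Robust lemma in F'}; the key implication (that disjointness from $R$ forces $\delta$ to cross each band that $R$ crosses at least twice) is stated without justification. Your direct version supplies exactly that justification: cut $F'$ along the essential arc $\delta$ to get an annulus $A_\delta$ whose core is $R$, note that a band missed by $\delta$ survives in $A_\delta$, and observe that the core meets a properly embedded arc of an annulus at most once, so $|P'_{i_0}|\le 1$, with the degenerate case $|P'_{i_0}|=0$ forcing $R$ to be the unique essential curve missing $B'_{i_0}$, which crosses each remaining band exactly once. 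Two details to pin down in a final write-up: first, the isotopy that puts $\delta$ in essential position with respect to the bands (needed before invoking Lemma~\ref{Robust lemma in F'}) must be performed keeping $\delta$ disjoint from $R$; this works because $R$ has only essential intersections with the bands, so no bigon between $\delta$ and a band edge can contain an arc of $R$. Second, the passage from ``$R$ is isotopic to the core of $A_\delta$'' to the on-the-nose count $|P'_{i_0}|=1$ (rather than merely a statement about the isotopy class of $R$) uses that same minimality of $R\cap(\partial D_A\cup\partial D_B)$, via the bigon criterion. Your sketch of the final word forms is at the same level of detail as the paper, whose proof does not derive them at all but treats them as immediate consequences of $|P'_{i_0}|=1$ and the classification of simple closed curves in the once-punctured torus $F'$.
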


\begin{proof}
Suppose that $|P'_{i}|>1$ for every $i\in I$. Then since any connection $\delta$ of $\beta$ in $F'$ is disjoint from $R$,
it follows that $\delta$ intersects each of $B_i$, $i\in I$, in $F'$, which is a contradiction to Lemma~\ref{Robust lemma in F'}.
\end{proof}

\section{Application on some P/SF knots}\label{Applications on some P/SF knots}
In this section, we give applications of the nonhyperbolic conditions for P/P or P/SF knots demonstrated in the previous sections to some P/SF knots.
Consider an abstract R-R diagram of a pair of simple closed curves $(\alpha, M)$ in the boundary of a genus two handlebody $H$ shown in Figure~\ref{kisthyp1} with $p>0$ and $|J|>1$, where $M$ is a meridian of $H[R]$. In other words, $\{M, R\}$ bound a complete set of cutting disks of $H'$ such that $(\Sigma; H, H')$ is a genus two Heegaard splitting of $S^3$ and $R$ is disjoint from $\alpha$. By Theorem 5.1 of \cite{K20c}, $H[\alpha]$ is Seifert-fibred over the M\"{o}bius band with one exceptional fiber of index $|J|$ and thus $\alpha$ is Seifert in $H$. In addition, since $M\cap \alpha$
is a single point of transverse intersection, $\alpha$ is primitive in $H$. Therefore $\alpha$ is primitive/Seifert.

\begin{lem}\label{hyperbolicity in KIST}
All of the knots represented by $\alpha$ in Figure~\emph{\ref{kisthyp1}} are hyperbolic.
\end{lem}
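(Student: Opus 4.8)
The plan is to argue by contradiction, combining the proper power dichotomy of Proposition~\ref{nonhyperbolicity of P/P and P/SF from a proper power} with the robustness machinery of the preceding section. Suppose some knot $k$ represented by $\alpha$ in Figure~\ref{kisthyp1} fails to be hyperbolic. Since $M\cap\alpha$ is a single transverse point, $M$ is a meridian of $H[R]$ and $R$ is the unique curve in $\partial H$ disjoint from $\alpha$ bounding a cutting disk of $H'$; by Proposition~\ref{nonhyperbolicity of P/P and P/SF from a proper power} there must then exist a proper power curve $\beta$ in $H$ disjoint from $R$. First I would set $F$ to be a regular neighborhood of $M\cup\alpha$, a once-punctured torus with $\Gamma=\partial F$, and choose a complete set of cutting disks $\{D_A,D_B\}$ for which $G(\Gamma)$ is robust. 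With these disks fixed I would read off from Figure~\ref{kisthyp1} the band structure of $(\partial D_A\cup\partial D_B)\cap F$ and of $(\partial D_A\cup\partial D_B)\cap F'$ (two or three bands each, by Lemma~\ref{Robust lemma with two bands}), record the transverse paths $P_i$ and $P'_i$, and express $R$ explicitly in terms of the $P'_i$.

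Because $R$ lies in $F'$ and $\beta$ is disjoint from $R$, the curve $\beta$ either lies completely in $F$ or meets $\Gamma$ essentially, and I would dispose of these two cases separately. In the first case, Proposition~\ref{Robust lemma in F} forces $\beta$ to be one of the finitely many explicit candidates $\overline{P}_1,\overline{P}_2$ (two bands) or $P_1P_2^{-1},P_1P_3^{-1},P_2P_3^{-1}$ (three bands). For each candidate I would compute its class in $\pi_1(H)=F(A,B)$ directly from the band picture and verify, via Theorem~\ref{recognizing primitives and proper powers} (equivalently, by checking its R-R diagram against Theorem~\ref{main theorem4}), that it is either primitive or bounds a disk, hence is not a proper power curve. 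This contradicts the existence of $\beta$ and rules out the first case.

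In the second case, where $\beta$ meets $\Gamma$ essentially, Proposition~\ref{Possible R in F' from Robust lemma} yields an index $i_0\in I$ with $|P'_{i_0}|=1$ and pins $R$ down to the normal form $\overline{P'}_{i_0}{\overline{P'}_j}^{\,m}$ (two bands) or $P'_{i_0}{P'_j}^{-1}(P'_k{P'_j}^{-1})^{m}$ (three bands). I would then confront this normal form with the $R$ read off from Figure~\ref{kisthyp1}: since $p>0$ and $|J|>1$, the curve $R$ traverses every band of $(\partial D_A\cup\partial D_B)\cap F'$ at least twice, that is $|P'_i|\ge 2$ for all $i\in I$, so no index $i_0$ with $|P'_{i_0}|=1$ can exist. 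This contradicts Proposition~\ref{Possible R in F' from Robust lemma}, eliminating the second case and completing the argument that $k$ is hyperbolic.

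The main obstacle will be the bookkeeping in the second case: verifying uniformly in $p$ and $J$ that each $|P'_i|\ge 2$, i.e. that the curve $R$ determined by Figure~\ref{kisthyp1} genuinely crosses every band of $F'$ more than once for all admissible parameter values. A secondary difficulty lies in the first case, where I must confirm that each of the explicitly listed candidates $\beta$ fails to be a proper power by computing its word and matching it against the exponent patterns of Theorem~\ref{recognizing primitives and proper powers}, while keeping the robust choice of $\{D_A,D_B\}$ consistent with the given diagram throughout.
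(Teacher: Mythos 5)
Your setup, your disposal of the case $\beta\subset F$ (which matches the paper's Claim~\ref{claim1}), and your appeal to robustness are all in line with the paper's argument. The genuine gap is in your second case, where $\beta$ meets $\Gamma$ essentially. You propose to confront the normal form $P'_{i_0}{P'_j}^{-1}(P'_k{P'_j}^{-1})^m$ supplied by Proposition~\ref{Possible R in F' from Robust lemma} with ``the $R$ read off from Figure~\ref{kisthyp1}'' and to verify that this $R$ crosses every band of $(\partial D_A\cup\partial D_B)\cap F'$ at least twice, so that no index $i_0$ with $|P'_{i_0}|=1$ exists. But $R$ cannot be read off from Figure~\ref{kisthyp1}: that figure is an abstract R-R diagram of the pair $(\alpha, M)$ only, and $R$ is defined implicitly as the boundary of the unique cutting disk of $H'$ disjoint from $\alpha$. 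No explicit word for $R$ in $\pi_1(H)=F(A,B)$ is available, uniformly in $p$ and $J$ -- working around this ignorance is precisely the point of the proof's structure -- so your claimed inequality $|P'_i|\ge 2$ for all $i\in I$ is unsubstantiated, and your entire contradiction rests on it.

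Moreover, the paper's own computation shows that no such counting argument can succeed. The paper runs the logic in the opposite direction: it accepts the normal forms as candidates for $R$ (six families, three for $J>1$ and three for $J<-1$, listed explicitly as words in $A$ and $B$), and then eliminates them using a constraint your proposal never invokes: the determinant $\Delta=|[M]\,[R]|$ computed in $H_1(H)$ must be $\pm1$, because $H[M,R]\cong S^3$. This unimodularity test rules out every candidate for every admissible $(p,J,m)$ except case (4) with $(p,J,m)=(1,-2,-3)$. That surviving curve genuinely has $|P'_{i_0}|=1$ and is homologically consistent, so it cannot be excluded by any argument of the kind you propose; the paper kills it separately by observing that it has at least two bands of connections in each handle and contains the subword $B^{-3}A^{-2}B^{-1}$, whence Proposition~\ref{sufficient condition for hyperbolicity}(2) shows $H[R]$ is not the exterior of the unknot, a torus knot, or a cable knot -- contradicting the nonhyperbolicity assumption. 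To repair your proof you would need to replace the band-counting step with this homological determinant computation together with the final application of Proposition~\ref{sufficient condition for hyperbolicity}.
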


\begin{proof}
Suppose the knot represented by $\alpha$ is not hyperbolic. Then by Theorem~\ref{nonhyperbolicity from a proper power}, there exists a proper power curve $\beta$ in $\partial H$ that is disjoint from $R$.

\begin{figure}[t]
\includegraphics[width = 0.6\textwidth]{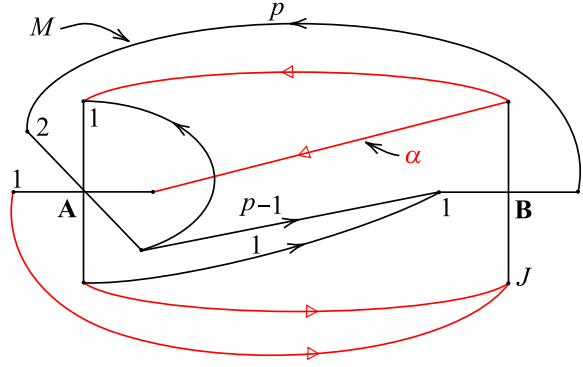}\caption{An abstract R-R diagram of a KIST pair $(\alpha, M)$, where $p>0$ and $|J|>1$.}\label{kisthyp1}
\end{figure}

Let $F$ be a regular neighborhood of $\alpha\cup M$ in $\partial H$. Let $\Gamma=\partial F$ and $F'=\overline{\partial H-F}$. Then $F$ and $F'$ are once-punctured tori with $\Gamma=F\cap F'$. Note that $R$ lies completely in $F'$. Since $\alpha=A^{-1}B^JAB^J$ and $M=A(BA^2)^p$ in $\pi_1(H)$ when reading from their intersection point, algebraically $\Gamma=M\alpha M^{-1}\alpha^{-1}$ is
$$\Gamma=(A^2B)^pAB^JAB^J(A^{-2}B^{-1})^pA^{-1}B^{-J}A^{-1}B^{-J}.$$
The R-R diagram of $\Gamma$ when $p=2$ appears as in Figure~\ref{kisthyp2}, where some faces which belong to $F$ or $F'$ are labelled by the letter $F$ or $F'$ respectively. Then $\Gamma$ has a Heegaard diagram $\mathbb{D}_\Gamma$ underlying the R-R diagram as illustrated in Figure~\ref{kisthyp3} depending on the sign of $J$.

\begin{figure}[t]
\includegraphics[width = 0.6\textwidth]{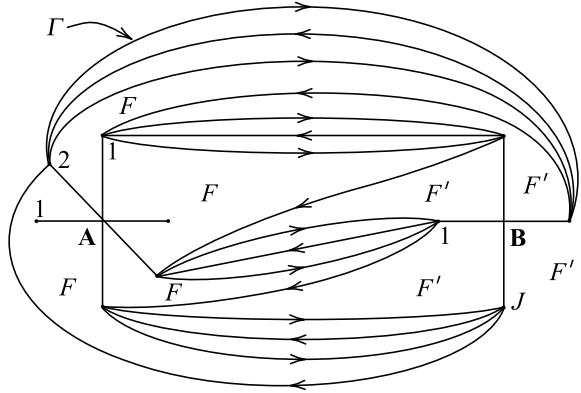}\caption{R-R diagram of $\Gamma$ when $p=2$.}\label{kisthyp2}
\end{figure}

\begin{figure}[t]
\includegraphics[width = 0.75\textwidth]{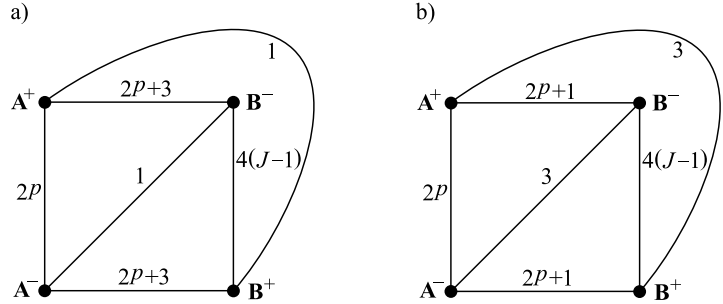}\caption{The Heegaard diagram $\mathbb{D}_\Gamma$ of $\Gamma$ when $J>1$ in a) and $J<1$ in b) underlying the R-R diagram of $\Gamma$.}\label{kisthyp3}
\end{figure}

It follows that the Heegaard diagram $\mathbb{D}_\Gamma$ is robust, and the arcs of $(\partial D_A\cup \partial D_B)\cap F$($(\partial D_A\cup \partial D_B)\cap F'$, resp.) cut $F$($F'$, resp.) into a number of faces, each of which is a rectangle, except for a pair of hexagonal faces. Therefore $(\partial D_A\cup \partial D_B)\cap F$ and $\partial D_A\cup \partial D_B)\cap F'$ consist of three bands of connections in
$F$ and $F'$ respectively since there are two hexagonal faces in each of $F$ and $F'$.
Let $Hex_1$ and $Hex_2$($Hex'_1$ and $Hex'_2$, resp.) be the two hexagonal faces in $F$($F'$, resp.). They can be obtained in the R-R diagram of $\Gamma$ by isotoping the oriented curves entering into or coming out from the maximally labelled connection in each handle to the oriented curves passing through the other two connections with smaller labels. In the R-R diagram of $\Gamma$, $2$ and $J$ are the maximal labels in the $A$- and $B$-handles respectively.
Note that the isotopies in the $B$-handle depend on the sign of $J$.  Figures~\ref{kisthyp4}a and \ref{kisthyp4}b show the isotopies of the curves passing through the 2-connections and the $J$-connections into the dotted curves passing through the other two connections when $J>1$ and $J<-1$ respectively. With these isotopies performed, we can obtain the hexagonal faces $Hex_1$ and $Hex_2$ in $F$ and $Hex'_1$ and $Hex'_2$ in $F'$ as shown in Figure~\ref{kisthyp4}.

\begin{figure}[t]
\includegraphics[width = 0.65\textwidth]{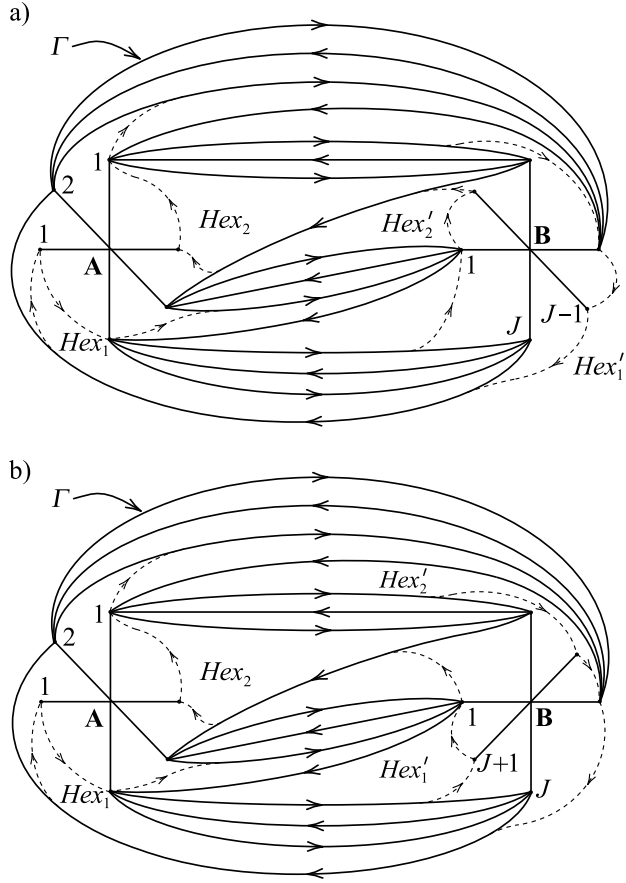}\caption{Hexagonal faces $Hex_1$ and $Hex_2$ in $F$, and $Hex'_1$ and $Hex'_2$ in $F'$ when a) $J>1$ and b) $J<-1$.}\label{kisthyp4}
\end{figure}

As discussed in Section~\ref{Finding $R$ from a P/P or P/SF knot $k$ and a meridian $M$ of $k$}, we can obtain three oriented paths $P_1, P_2,$ and $P_3$ from $Hex_1$ to $Hex_2$ in $F$, whose first homotopies in $\pi_1(H)$ are
$$P_1=A,\hspace{0.05cm} P_2=B^JAB^J, \hspace{0.15cm}\text{and} \hspace{0.16cm} P_3=A^{-1}(B^{-1}A^{-2})^{p-1}B^{-1}A^{-1}.$$
Similarly there are three oriented paths $P'_1, P'_2,$ and $P'_3$ from $Hex'_1$ to $Hex'_2$ in $F'$, whose first homotopies in $\pi_1(H)$ are when $J>1$
$$P'_1=B^{J-1},\hspace{0.05cm} P'_2=B^{-1}A^{-1}B^{-J}A^{-1}B^{-1}, \hspace{0.15cm}\text{and} \hspace{0.16cm} P'_3=(A^2B)^{p-1}A^2,$$
\noindent and when $J<-1$
$$P'_1=B^{J+1},\hspace{0.05cm} P'_2=A^{-1}B^{-J}A^{-1}, \hspace{0.15cm}\text{and} \hspace{0.16cm} P'_3=B(A^2B)^p.$$

Now we consider the proper power curve $\beta$.

\begin{claim}\label{claim1}
$\beta$ has an essential intersection with $\Gamma$.
\end{claim}
\begin{proof}
Suppose $\beta$ has no essential intersections with $\Gamma$, i.e., $\beta\cap\Gamma=\varnothing$. Since $R\subseteq F'$, $\beta$ must lie completely in $F$. Since the Heegaard diagram $\mathbb{D}_\Gamma$ of $\Gamma$ is robust, by Proposition~\ref{Robust lemma in F}, $\beta$ must be one of the curves $P_1P_2^{-1}, P_1P_3^{-1}$, and $P_2P_3^{-1}$. It is easy to check that $P_1P_2^{-1}=\alpha^{-1}, P_1P_3^{-1}=M$. Therefore $\beta=P_2P_3^{-1}$ whose homotopy is $\beta=B^JAB^JAB(A^2B)^{p-1}A$. If $p>1$, then $A^2$ and $B^J$ appear, implying that by Proposition~\ref{necessary condition for pp curves} $\beta$ cannot be a proper power curve. Therefore $p=1$ and $\beta=B^JAB^JABA$. The automorphism $A\mapsto B^{-J}A$ sends $\beta$ to $\beta=A^3B^{1-J}$, which is not a proper power curve.
\end{proof}

By Claim~\ref{claim1} $\beta$ has an essential intersection with $\Gamma$. Since $\Gamma$ is robust, by Proposition~\ref{Possible R in F' from Robust lemma} the possible curves for $R$ are the following simple closed curves in $F'$ up to orientations:
$$P'_{1}{P'_2}^{-1}(P'_3{P'_2}^{-1})^m,\hspace{0.05cm} P'_{2}{P'_1}^{-1}(P'_3{P'_1}^{-1})^m, \hspace{0.15cm}\text{and} \hspace{0.16cm} P'_{3}{P'_2}^{-1}(P'_1{P'_2}^{-1})^m$$
where $m\in\mathbb{Z}$, such that when $J>1$,

\begin{enumerate}
\item $P'_{1}{P'_2}^{-1}(P'_3{P'_2}^{-1})^m=B^JAB^JAB((A^2B)^{p-1}A^2BAB^JAB)^m$,
\item $P'_{2}{P'_1}^{-1}(P'_3{P'_1}^{-1})^m=B^{-1}A^{-1}B^{-J}A^{-1}
    B^{-J}((A^2B)^{p-1}A^2B^{-J+1})^m$,
\item $P'_{3}{P'_2}^{-1}(P'_1{P'_2}^{-1})^m=(A^2B)^{p-1}A^2BAB^JAB(B^JAB^JAB)^m$,
\end{enumerate}

\noindent and when $J<-1$,

\begin{enumerate}
\item[(4)] $P'_{1}{P'_2}^{-1}(P'_3{P'_2}^{-1})^m=B^{J+1}AB^JA(B(A^2B)^pAB^JA)^m$,
\item[(5)] $P'_{2}{P'_1}^{-1}(P'_3{P'_1}^{-1})^m=A^{-1}B^{-J}A^{-1}B^{-J-1}
    (B(A^2B)^pB^{-J-1})^m$,
\item[(6)] $P'_{3}{P'_2}^{-1}(P'_1{P'_2}^{-1})^m=B(A^2B)^pAB^JA(B^{J+1}AB^JA)^m$.
\end{enumerate}

Now we compute the determinant $\Delta=|[M] [R]|$, where $[M], [R]\in H_1(H)$. $\Delta$ must be unimodular because $H[M, R]\cong S^3$.
If $J>1$ and $R$ is the curve in the first case (1), i.e., $R=P'_{1}{P'_2}^{-1}(P'_3{P'_2}^{-1})^m=B^JAB^JAB((A^2B)^{p-1}A^2BAB^JAB)^m$, then
\[
\begin{split}
|[M] [R]| &= \left|\begin{array}{cc}
2p+1 &  (2p+2)m+2\\
p  & (p+J+1)m+2J+1\\
\end{array} \right|
=\left|\begin{array}{cc}
2p+1 &  m+2\\
p  & (J+1)(m+2)-1\\
\end{array} \right|\\
&=((2p+1)J+p+1)(m+2)-(2p+1).
\end{split}
\]
The second matrix is obtained from the first matrix by adding $-m$ multiple of the first column to the second column. Similarly we can compute the determinants in the other five cases (2)--(6).  All of the determinants $\Delta$ are as follows, where $\Delta_i$ denotes the determinant in the case $(i)$ with $i=1, \ldots, 6$.

\begin{enumerate}
\item $\Delta_1=((2p+1)J+p+1)(m+2)-(2p+1)$,
\item $\Delta_2=(-(2p+1)J+p)(m+2)-(2p+1)$,
\item $\Delta_3=((2p+1)J-p)(2m+1)+(2p+1)(m+1)$,
\item $\Delta_4=((2p+1)(J+1)-p)(m+2)-(2p+1)$,
\item $\Delta_5=((2p+1)J-p)(m+2)+(2p+1)$,
\item $\Delta_6=((2p+1)(J+1)-p)(2m+1)-(2p+1)(m+1)$.
\end{enumerate}

It is easy to see that for all the values of $p, J,$ and $m$ with $p>0$, $|J|>1$, and $m\in\mathbb{Z}$, $\Delta_i\neq \pm1$ for $i=1, \ldots, 6$, only except
for $\Delta_4$ with the set of the parameters $(p, J, m)=(1, -2, -3)$, in which case $\Delta_4=1$.

In order to complete the proof, we use Proposition~\ref{sufficient condition for hyperbolicity}. Since we have assumed that the knot represented by $\alpha$ is not hyperbolic, $H[R]$ is the exterior of the unknot, a torus knot, or a cable knot. Therefore the curve $R$ can satisfy none of the conditions (1), (2), and (3) in Proposition~\ref{sufficient condition for hyperbolicity}, provided that $R$ has at least two bands of connections in each handle. With $(p, J, m)=(1, -2, -3)$ inserted in the case (4), $R$ is
\[
\begin{split}
R &=B^{-1}AB^{-2}A(BA^2BAB^{-2}A)^{-3}\\
&=B^{-3}A^{-2}(B^{-1}A^{-1}B^2A^{-1}B^{-1}A^{-2})^2.
\end{split}
\]
Since $A^{-1}, A^{-2}, B^{-1}, B^{2}$ and $B^{-3}$ appear in $R$, $R$ has at least two bands of connections in both the $A$- and $B$-handles. Furthermore, $B^{-3}A^{-2}B^{-1}$ appears in $R$ and thus $R$ satisfies the condition (2) in Proposition~\ref{sufficient condition for hyperbolicity}, a contradiction. This completes the proof.
\end{proof}


\end{document}